\documentclass{amsart}
\usepackage{amsmath,amsthm,enumerate,mathrsfs,graphicx,makecell,amssymb,booktabs}

\newtheorem{theorem}{Theorem}
\newtheorem{proposition}{Proposition}[section]

\newtheorem{lemma}[proposition]{Lemma}

\newtheorem{definition}[proposition]{Definition}

\theoremstyle{definition}

\newcommand{\vertle}[1]{{\left\vert\kern-0.25ex\left\vert\kern-0.25ex\left\vert #1 
    \right\vert\kern-0.25ex\right\vert\kern-0.25ex\right\vert}}

\DeclareMathOperator{\tr}{tr}
\DeclareMathOperator{\dimaff}{dim_{\mathsf{aff}}}
\DeclareMathOperator{\Int}{Int}
\DeclareMathOperator{\diam}{diam}

\DeclareMathOperator{\sign}{sign}

\DeclareMathOperator{\Fix}{Fix}

\newcommand{\iii}{\mathtt{i}}
\newcommand{\jjj}{\mathtt{j}}

\title[Approximating the affinity dimension]{Fast approximation of the affinity dimension for dominated affine iterated function systems}
\begin{document}
\author{Ian D. Morris}
\begin{abstract}
In 1988 K. Falconer introduced a formula which predicts the value of the Hausdorff dimension of the attractor of an affine iterated function system. The value given by this formula -- sometimes referred to as the \emph{affinity dimension} -- is known to agree with the Hausdorff dimension both generically and in an increasing range of explicit cases. It is however a nontrivial problem to estimate the numerical value of the affinity dimension for specific iterated function systems. In this article we substantially extend an earlier result of M. Pollicott and P. Vytnova on the computation of the affinity dimension. Pollicott and Vytnova's work applies to planar invertible affine contractions with positive linear parts under several additional conditions which among other things constrain the affinity dimension to be between 0 and 1. We extend this result by passing from planar self-affine sets to self-affine sets in arbitrary dimensions, relaxing the positivity hypothesis to a domination condition, and removing all other constraints including that on the range of values of the affinity dimension. We provide explicit examples of two- and three-dimensional affine iterated function systems for which the affinity dimension can be calculated to more than 30 decimal places.
\end{abstract}
\maketitle

\section{Introduction}\label{se:one}

\subsection{Background and context}

If $T_1,\ldots,T_N \colon \mathbb{R}^d \to \mathbb{R}^d$ are contractions it is well-known that there exists a unique nonempty compact set $X \subset \mathbb{R}^d$ such that $X=\bigcup_{i=1}^NT_iX$. In this case $(T_1,\ldots,T_N)$ is called an \emph{iterated function system} and the set $X$ its \emph{attractor}. When each transformation $T_i$ is a similitude with contraction ratio $r_i \in (0,1)$ and the distinct images $T_iX \cap T_jX$ do not overlap too strongly it is classical that the box dimension and Hausdorff dimension of the attractor are both equal to the unique real number $s>0$ such that $\sum_{i=1}^Nr_i^s=1$ (see for example \cite[Theorem 9.3]{Fa14} or the original article \cite{Hu81}). In the case where each $T_i$ is instead an affine map $T_ix=A_ix+v_i$ the Hausdorff dimension and box dimension of the attractor $X$ -- which in this context we call a \emph{self-affine set} -- are more challenging to calculate. The problem of determining the Hausdorff dimension of such sets, even implicitly, has been an active topic of research since the 1980s and has received particularly intense research interest within the last decade (see for example the classic articles \cite{Be84,Ed92,Fa88,Fa92,HuLa95,Mc84} and more recent contributions such as   \cite{Ba08,BaHoRa18,BoMo18,DaSi17,FaKe18,FeSh14,Fr12,KaSh09,MoSh17}).  In the landmark article \cite{Fa88} K. Falconer defined an implicit formula which is known to give the correct value for the Hausdorff dimension of a wide variety of self-affine sets. The subject of this article is the numerical estimation of the value predicted by Falconer's formula.

In order to define Falconer's formula we require a few preliminary definitions. Let $M_d(\mathbb{R})$ denote the set of all real $d \times d$ matrices. If $A \in M_d(\mathbb{R})$ we recall that the \emph{singular values} of $A$ are defined to be the square roots of the eigenvalues of the positive semidefinite matrix $A^\top A$. We denote the singular values of $A \in M_d(\mathbb{R})$ by $\sigma_1(A),\ldots,\sigma_d(A)$ in decreasing order of absolute value. For each $A \in M_d(\mathbb{R})$ and $s\geq 0$ let us define
\[\varphi^s(A):=\left\{\begin{array}{cl}\sigma_1(A)\cdots \sigma_{\lfloor s \rfloor}(A) \sigma_{\lceil s \rceil}(A)^{s-\lfloor s \rfloor}&\text{if }0 \leq s \leq d,\\
|\det A|^{\frac{s}{d}}&\text{if }s \geq d.\end{array}\right.\]
It was shown in \cite{Fa88} that for each $s \geq 0$ we have $\varphi^s(AB) \leq \varphi^s(A)\varphi^s(B)$ for all $A,B \in M_d(\mathbb{R})$. The \emph{affinity dimension} of the iterated function system $T_ix:=A_ix=v_i$, where $1 \leq i \leq N$, is then defined to be the quantity
\[\dimaff (T_1,\ldots,T_N) := \inf \left\{s>0 \colon\sum_{n=1}^\infty \sum_{i_1,\ldots,i_n=1}^N \varphi^s(A_{i_1}\cdots A_{i_n})<\infty\right\}.\]
Since $\dimaff (T_1,\ldots,T_N)$ depends only on $A_1,\ldots,A_N$ and not on the additive part of the transformations $T_i$ we will also denote it by $\dimaff (A_1,\ldots,A_N)$.
If the matrices $A_1,\ldots,A_N$ are assumed to be invertible and contracting with respect to some norm on $\mathbb{R}^d$ then the affinity dimension  is the unique $s>0$ such that the quantity
\[ P(A_1,\ldots,A_N;s):= \lim_{n \to \infty }\frac{1}{n}\log \sum_{i_1,\ldots,i_n=1}^N \varphi^s(A_{i_1}\cdots A_{i_n})\]
is equal to zero.

Let $\|\cdot\|$ denote the Euclidean norm on $\mathbb{R}^d$. It was shown in \cite{Fa88} that when $\max_{1 \leq i \leq N}\|A_i\|<1$ the affinity dimension $\dimaff (A_1,\ldots,A_N)$ is well-defined and is an upper bound for the box dimension of the attractor. (This argument may easily be adapted to the case where $\max_{1 \leq i \leq N}\vertle{A_i}<1$ in the operator norm induced by some norm $\vertle{\cdot}$ on $\mathbb{R}^d$.)  It was additionally shown that when matrices $A_1,\ldots,A_N$ satisfying $\max_{1 \leq i \leq N}\|A_i\|<\frac{1}{3}$ are fixed, then for Lebesgue-a.e. choice of $(v_1,\ldots,v_N) \in (\mathbb{R}^d)^N$ the attractor of the affine transformations $T_1,\ldots,T_N$ given by $T_ix:=A_ix+v_i$ has Hausdorff dimension equal to $\min\{d,\dimaff(A_1,\ldots,A_N)\}$. Subsequent research focused on providing explicit examples for which the Hausdorff dimension of the attractor equals the affinity dimension of the defining iterated function system, with explicit special cases being given in articles such as \cite{FaKe18,Fr12,HuLa95,MoSh17}. Recently, B. Bara\'ny, M. Hochman and A. Rapaport have shown that the Hausdorff dimension of a planar self-affine set is always equal to the affinity dimension of the defining iterated function system as long as the matrices $A_i$ are invertible, the affine transformations satisfy the strong open set condition, and the matrices $|\det A_i|^{-1/2}A_i$ neither belong to a compact subgroup of $GL_2(\mathbb{R})$ nor preserve a finite subset of $\mathbb{RP}^1$. At the present time, however, results on higher-dimensional self-affine sets additional to that of Falconer are essentially unavailable.

Despite its prominent r\^ole in the dimension theory of self-affine sets, the properties of the affinity dimension itself have been investigated only very recently. In the 2014 article \cite{FeSh14} D.-J. Feng and P. Shmerkin showed for the first time that the affinity dimension $\dimaff (A_1,\ldots,A_N)$ depends continuously on the entries of the matrices $A_1,\ldots,A_N$, and in \cite{Mo16} it was shown that the affinity dimension is computable in principle in the sense that for any given $\varepsilon>0$ we may algorithmically compute an explicit approximation to $\dimaff(A_1,\ldots,A_N)$ which is guaranteed to be accurate to within the prescribed error $\varepsilon$ and which requires only finitely many arithmetical operations to calculate. However, the method of \cite{Mo16} does not result in an algorithm which is fast enough to be useful in practical computations. Further general properties of the affinity dimension were investigated in \cite{BoMo18,KaMo16}. 

At the present time there are very few practical techniques available for the computation of the affinity dimension. In the article \cite{Mo19} the author gave a simple closed-form expression for the affinity dimension in the very special case where the matrices $A_i$ are generalised permutation matrices, that is, matrices having exactly one nonzero entry in every row and column. Closed-form expressions are also available in the case of diagonal and upper-triangular matrices \cite{FaMi07,KaMo16}. To the best of the author's knowledge there so far exists only one result in the literature which is powerful enough to be able to estimate the affinity dimension for a nonempty open set of examples in a practicable time frame. The following result was proved by M. Pollicott and P. Vytnova in \cite{PoVy16}. Here and throughout this article $\rho(A)$ denotes the spectral radius of the matrix or linear operator $A$.
\begin{theorem}\label{th:one}
Let $A_1,\ldots,A_N$ be $2\times 2$ matrices which satisfy the following conditions:
\begin{enumerate}[(i)]
\item
We have $\sigma_1(A_i)^2< \sigma_2(A_i)<1$ for all $i=1,\ldots,N$.
\item
If $\mathcal{Q}_2$ is defined to be the open second quadrant $\{(x,y)\in \mathbb{R}^2 \colon x<0<y\}$, then the sets $A_1^{-1}\mathcal{Q}_2,\ldots,A_N^{-1}\mathcal{Q}_2$ are subsets of $\mathcal{Q}_2$ and have pairwise disjoint closures in $\mathcal{Q}_2$.
\item
All entries of the matrices $A_i$ are strictly positive\footnote{This hypothesis is invoked in Pollicott and Vytnova's section 3 but is not explicitly stated in their introduction. It does not follow automatically from the other hypotheses unless the determinants are assumed positive.}.
\end{enumerate}
For each $n \geq 1$ and $s \in \mathbb{C}$ define
\[t_n(s)=\sum_{i_1,\ldots,i_n=1}^N \frac{\rho(A_{i_1}\cdots A_{i_n})^{2+s}}{\rho(A_{i_1}\cdots A_{i_n})^2 - \det A_{i_1}\cdots A_{i_n}},\]
\[a_n(s):=\sum_{k=1}^n \frac{(-1)^k}{k!} \sum_{\substack{(n_1,\ldots,n_k) \in \mathbb{N}^k \\ \sum_{i=1}^k n_i=n}} \prod_{i=1}^k \frac{t_{n_i}(s)}{n_i}\]
and $a_0(s):=1$, and for each $n\geq 1$ let $s_n \in \mathbb{R}$ denote the smallest positive real number $s$ such that $\sum_{i=0}^n a_i(s)=0$. Then $\dimaff(A_1,\ldots,A_N) \in (0,1)$, $s_n$ is well-defined for all sufficiently large $n$, and there exists $\gamma>0$ such that 
\[\left|\dimaff (A_1,\ldots,A_N)-s_n\right|=O\left(\exp(-\gamma n^2)\right).\]
\end{theorem}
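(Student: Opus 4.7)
The plan is to realise the sequences $t_n(s)$ and the partial sums $\sum_{i=0}^n a_i(s)$ as the traces, and as the $n$-th Taylor polynomials of the Fredholm determinant, of a nuclear transfer operator $\mathcal{L}_s$ acting on a space of holomorphic functions on a complex neighbourhood of the projectivisation of $\mathcal{Q}_2$ in $\mathbb{RP}^1$. Hypotheses (ii) and (iii) supply exactly the data needed to make this construction work: positivity of entries ensures that each $A_i$ acts on $\mathcal{Q}_2$, while the strict disjoint invariance condition produces a projective separation property and allows each induced map to extend to a strict holomorphic contraction between two nested complex neighbourhoods of a real interval $I\subset \mathbb{RP}^1$.

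I would first define $(\mathcal{L}_s f)(x) := \sum_{i=1}^N w_i(x,s)\, f(g_i(x))$, where $g_i$ is the M\"obius map on $\mathbb{RP}^1$ induced by $A_i$ and $w_i(x,s)$ is a positive weight chosen so that the iterated weight cocycle, evaluated at any periodic direction $x^*$ of $g_{\mathbf{i}}:=g_{i_1}\circ\cdots\circ g_{i_n}$, equals $\rho(A_{\mathbf{i}})^s$. For a $2\times 2$ matrix $A$ with eigenvalues $\lambda_1,\lambda_2$ of moduli $|\lambda_1|>|\lambda_2|$ the derivative of $g$ on $\mathbb{RP}^1$ at the attracting fixed direction is $\lambda_2/\lambda_1 = \det A/\rho(A)^2$. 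The Grothendieck--Ruelle trace formula for such nuclear composition operators then yields
\[
\tr(\mathcal{L}_s^n) = \sum_{|\mathbf{i}|=n}\frac{w_{\mathbf{i}}(x^*_{\mathbf{i}},s)}{1-g_{\mathbf{i}}'(x^*_{\mathbf{i}})} = \sum_{|\mathbf{i}|=n}\frac{\rho(A_{\mathbf{i}})^{2+s}}{\rho(A_{\mathbf{i}})^2 - \det A_{\mathbf{i}}} = t_n(s),
\]
and the identity $\det(I-\mathcal{L}_s) = \exp\bigl(-\sum_{n\geq 1} n^{-1}\tr(\mathcal{L}_s^n)\bigr)$, expanded as a power series and collected by total order, reproduces exactly $\sum_{n\geq 0}a_n(s)$.

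I would then identify $\dimaff(A_1,\ldots,A_N)$ with the smallest positive zero of $s\mapsto \det(I-\mathcal{L}_s)$. Because the iterated weights at periodic directions are asymptotically comparable to $\varphi^s(A_{\mathbf{i}})$ under domination, the spectral radius of $\mathcal{L}_s$ equals $\exp P(A_1,\ldots,A_N;s)$; positivity and nuclearity yield via Perron--Frobenius a simple leading eigenvalue. At $s=\dimaff(A_1,\ldots,A_N)$ the pressure vanishes, producing a simple zero of $\det(I-\mathcal{L}_s)$, and hypothesis (i) pins this zero to $(0,1)$ by forcing the pressure to be negative at $s=1$. The super-exponential convergence rate then comes from the decay of singular values of $\mathcal{L}_s$: because each $g_i$ extends to a strict holomorphic contraction between nested open sets, the operators $f\mapsto f\circ g_i$ on a Hardy or Bergman space have singular values bounded by $C\theta^k$ for some explicit $\theta\in(0,1)$. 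Grothendieck's classical bound on Fredholm determinants of such nuclear operators then gives $|a_n(s)|\leq C'\theta^{n(n-1)/2}$ uniformly on compact subsets of $s$, whence
\[
\Bigl| \det(I-\mathcal{L}_s) - \sum_{i=0}^n a_i(s)\Bigr| = O\bigl(\exp(-\gamma n^2)\bigr),
\]
and a standard Rouch\'e or implicit function argument transfers this to the simple root, giving the claimed bound on $|s_n-\dimaff(A_1,\ldots,A_N)|$.

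The principal difficulty will be the quantitative construction of the transfer operator: choosing a space of holomorphic functions on which $\mathcal{L}_s$ is genuinely nuclear with an explicit geometric rate of decay of singular values, and verifying the trace formula in the precise form with numerator $\rho(A_{\mathbf{i}})^{2+s}$ and denominator $\rho(A_{\mathbf{i}})^2 - \det A_{\mathbf{i}}$. Once these are established, the combinatorial expansion producing the $a_n(s)$, the Perron--Frobenius identification of the first positive zero of the determinant with $\dimaff$, and the transfer of super-exponential decay from the determinant to its root are essentially routine extensions of the classical dynamical zeta function machinery.
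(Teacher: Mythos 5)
Your proposal follows the same transfer--operator strategy that Pollicott and Vytnova use and that this paper generalises in sections 3--6: build a nuclear weighted composition operator on a space of holomorphic functions over a strictly invariant complex neighbourhood of the projective interval, identify the traces $\tr \mathcal{L}_s^n$ with $t_n(s)$ via the Grothendieck--Ruelle fixed-point formula (indeed the identity $g_{\mathbf{i}}'(x^*)=\det A_{\mathbf{i}}/\rho(A_{\mathbf{i}})^2$ gives exactly the denominator $\rho^2-\det$), expand the Fredholm determinant to recover $\sum_n a_n(s)$, and exploit geometric decay of singular values together with a Rouch\'e/implicit-function argument to locate the leading zero with super-exponential accuracy. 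One overstated step: you write that hypothesis (i) alone ``pins this zero to $(0,1)$ by forcing the pressure to be negative at $s=1$,'' but condition (i) alone only ensures that each $A_i$ is contracting and does not by itself imply $P(A_1,\ldots,A_N;1)<0$; as noted in the paper (citing \cite{HuLa95}) the inclusion $\dimaff(A_1,\ldots,A_N)\in(0,1)$ requires the combination of all three hypotheses, with the disjointness condition (ii) playing an essential role.
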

\emph{Remark.} The quantity $a_n(s)$ may be alternatively characterised as
\[ \frac{(-1)^n}{n!}\det\begin{pmatrix}
t_1(s)& n -1&  0&\cdots &0 &0\\
t_2(s)&t_1(s)&n-2 &\cdots &0 &0\\
t_3(s)&t_2(s)&t_1(s) &\ddots &0 &0\\
\vdots & \vdots & \vdots & \ddots  &\ddots& \vdots\\
t_{n-1}(s) &t_{n-2}(s)&t_{n-3}(s)&\cdots &t_1(s) &1\\
t_n(s) &t_{n-1}(s)&t_{n-2}(s)&\cdots &t_2(s) &t_1(s)
\end{pmatrix},\]
and we will prefer this format in our exposition.

The methods underlying the proof of Theorem \ref{th:one} will be described in more detail in the following section. We remark that condition (i) above implies that the matrices are invertible, and the combination of the three conditions implies $0<\dimaff(A_1,\ldots,A_N)<1$ (see \cite{HuLa95} for details). 

In fact the only condition which is really essential to Pollicott and Vytnova's argument is that the matrix entries are positive, although in cases where we have $\dimaff(A_1,\ldots,A_N)\in (1,2)$ the formula for $t_n(s)$ must be replaced with
\[t_n(s):=\sum_{i_1,\ldots,i_n=1}^N \frac{\rho(A_{i_1}\cdots A_{i_n})^{4-s}|\det A_{i_1}\cdots A_{i_n}|^{s-1}}{\rho(A_{i_1}\cdots A_{i_n})^2 - \det A_{i_1}\cdots A_{i_n}}.\]
 In this article we aim to prove as comprehensive as possible an extension of Theorem \ref{th:one}. In particular, as well as removing hypotheses (i)--(ii) from Theorem \ref{th:one} we will establish a version of that theorem which is valid for affine iterated function systems in dimensions higher than two, in which $\dimaff(A_1,\ldots,A_N)$ may take any value in the range $(0,d)$, and in which the hypothesis of positivity is weakened to one of domination. In order to state our results in full we will require a number of definitions, which relate to multilinear algebra, to positivity and to domination.

\subsection{Multilinear algebra}

In extending Theorem \ref{th:one} one of our concerns will be to allow matrices of arbitrary dimension. Whereas in two dimensions the function $\varphi^s(A)$ admits the simple characterisation
\[\varphi^s(A)=\left\{\begin{array}{cl} \|A\|^s&\text{if }0 \leq s \leq 1,\\
|\det A|^{s-1}\|A\|^{2-s}&\text{if }1 \leq s \leq 2,\end{array}\right.\]
when $s>1$ and $d>2$  the analogous formula involves exterior powers of the matrix $A$. In order to study the singular value function $\varphi^s$ in dimensions higher than two we therefore need to recall some concepts and notation from multilinear algebra.

Recall that when $1 \leq k \leq d$ the real vector space $\wedge^k \mathbb{R}^d$ is the vector space spanned by the formal expressions $\{v_1\wedge v_2 \wedge \cdots \wedge v_k \colon v_1,\ldots v_k \in \mathbb{R}^d\}$ subject to the identifications
\[\lambda(v_1 \wedge v_2\wedge  \cdots \wedge v_k) = (\lambda v_1)\wedge v_2\wedge \cdots \wedge v_k,\]
\[(u_1\wedge v_2\wedge \cdots \wedge v_k)+(v_1\wedge v_2\wedge \cdots \wedge v_k)=(u_1+v_1)\wedge v_2\wedge \cdots \wedge v_k,\]
\[v_1 \wedge v_2 \wedge \cdots \wedge v_k = (-1)^{\sign (\pi)} v_{\pi(1)}\wedge v_{\pi(2)}\wedge \cdots v_{\pi(k)}\]
for all $v_1,\ldots,v_k,u_1 \in \mathbb{R}^d$, $\lambda \in \mathbb{R}$ and permutations $\pi \colon \{1,\ldots,k\}\to \{1,\ldots,k\}$. The vector space $\wedge^k \mathbb{R}^d$ is ${d\choose k}$-dimensional and if $v_1,\ldots,v_d$ is any basis for $\mathbb{R}^d$ then $\{v_{i_1}\wedge \cdots \wedge v_{i_k} \colon 1 \leq i_1<i_2<\cdots<i_k\leq d\}$ is a basis for $\wedge^k\mathbb{R}^d$. The ${d \choose k}$-dimensional vector space $\wedge^k\mathbb{C}^d$ may be constructed analogously.

The space $\wedge^k\mathbb{R}^d$ inherits an inner product $\langle\cdot,\cdot\rangle_{\wedge^k\mathbb{R}^d}$ from the standard inner product $\langle\cdot,\cdot\rangle$ on $\mathbb{R}^d$ which satisfies
\[\langle u_1\wedge \cdots \wedge u_k,v_1\wedge \cdots \wedge v_k\rangle_{\wedge^k\mathbb{R}^d} = \det \left(\left[\langle u_i,v_j\rangle\right]_{i,j=1}^k\right).\]
If $A \in M_d(\mathbb{R})$ then we may define a linear map $A^{\wedge k} \colon \wedge^k \mathbb{R}^d \to \wedge^k\mathbb{R}^d$ by $A^{\wedge k}(v_1 \wedge \cdots \wedge v_k)=Av_1 \wedge \cdots \wedge Av_k$. If $v_1,\ldots,v_d$ is a basis for $\mathbb{C}^d$ consisting of eigenvectors and generalised eigenvectors for $A$ then the vectors $v_{i_1}\wedge \cdots \wedge v_{i_k}$ form a basis for $\wedge^k\mathbb{C}^d$ and it is not hard to see that if $\lambda_1,\ldots,\lambda_d$ are the eigenvalues of $A$ then the eigenvalues of $A^{\wedge k}$ are precisely the ${d \choose k}$ different products $\lambda_{i_1}\cdots \lambda_{i_k}$ with $1 \leq i_1<\cdots<i_k\leq d$. It is clear from the definition of the inner product on $\wedge^k\mathbb{R}^d$ that $(A^{\wedge k})^\top =(A^\top )^{\wedge k}$. Combining these observations we may easily see that
\[\left\|A^{\wedge k}\right\|_{\wedge^k\mathbb{R}^d}=\rho\left(\left({A^\top}A\right)^{\wedge k}\right)^{\frac{1}{2}}=\sigma_1(A)\cdots \sigma_k(A)\]
for all $A \in M_d(\mathbb{R})$. By convention we also define $\wedge^0 \mathbb{R}^{d}=\mathbb{R}$ and $A^{\wedge 0}=1$. It follows easily that we may write
\[\varphi^s(A)=\left\|A^{\wedge\lfloor s\rfloor}\right\|^{1+s-\lfloor s\rfloor}\left\|A^{\wedge \lceil s \rceil}\right\|^{\lceil s \rceil-s}\]
for all $A \in M_d(\mathbb{R})$ and $s \in [0,d]$.

\subsection{Positivity and domination}

As well as increasing the dimension of the matrices to be considered in our extension of Theorem \ref{th:one} we would like to weaken as much as possible the hypothesis that the matrices have positive entries. To this end we introduce the following definition:
\begin{definition}\label{de:mc}
Let $\mathsf{A} \subset M_d(\mathbb{R})$ be  nonempty. We say that $(\mathcal{K}_1,\ldots,\mathcal{K}_m)$ is a \emph{multicone} for $\mathsf{A}$ if the following properties hold:
\begin{enumerate}[(i)]
\item
Each $\mathcal{K}_j$ is a closed, convex subset of $\mathbb{R}^d$ with nonempty interior such that $\lambda \mathcal{K}_j \subseteq \mathcal{K}_j$ for every non-negative real number $\lambda$.
\item
There exists a unit vector $w \in \mathbb{R}^d$ such that $\langle u,w\rangle> 0$ for all nonzero vectors $u \in \bigcup_{j=1}^m \mathcal{K}_j$. In particular $\mathcal{K}_j\cap-\mathcal{K}_j=\{0\}$ for all $j=1,\ldots,m$.
\item
For every $A \in \mathsf{A}$ and $j \in \{1,\ldots,m\}$ there exists $\ell =\ell(j,A) \in \{1,\ldots,m\}$ such that $A(\mathcal{K}_j\setminus \{0\})\subset (\Int \mathcal{K}_\ell) \cup (-\Int \mathcal{K}_\ell)$.
\item
For all distinct $j_1,j_2 \in \{1,\ldots,m\}$ we have $\mathcal{K}_{j_1}\cap \mathcal{K}_{j_2}=\{0\}$.
\end{enumerate} 
When (ii) holds we say that $w$ is a \emph{transverse-defining vector} for $(\mathcal{K}_1,\ldots,\mathcal{K}_m)$ since the hyperplane normal to $w$ is transverse to $\bigcup_{j=1}^m \mathcal{K}_j$. If a multicone for $\mathsf{A}$ exists then we say that $\mathsf{A}$ is \emph{multipositive}.
\end{definition}
We shall say that a set $\mathsf{A} \subset M_d(\mathbb{R})$ is \emph{$k$-multipositive} if the set $\{A^{\wedge k} \colon A \in \mathsf{A}\}$ is multipositive. By abuse of notation we shall say that a tuple of matrices is $k$-multipositive if and only if the corresponding set is. We observe that a tuple of $d \times d$ matrices with all entries positive is multipositive since we may take $m=1$ and $\mathcal{K}_1$ to be the closed positive orthant in $\mathbb{R}^d$. It follows that every tuple of $d \times d$ matrices is $0$-multipositive. We also observe that every tuple of $d \times d$ invertible matrices is $d$-multipositive.

In generalising Theorem \ref{th:one} we will adopt the hypothesis that $(A_1,\ldots,A_N)$ is  $k$-multipositive for certain integers $k$ depending on $\dimaff(A_1,\ldots,A_N)$. In the invertible case this hypothesis may be related to the concept of \emph{domination} as follows. 
If $1 \leq k < d$ then a tuple of invertible matrices $(A_1,\ldots,A_N) \in GL_d(\mathbb{R})^N$ is called \emph{$k$-dominated} if there exist $C,\gamma>0$ such that
\[\sigma_{k+1}(A_{i_1}\cdots A_{i_n}) \leq Ce^{-\gamma n}\sigma_k(A_{i_n}\cdots A_{i_1})\]
for all $i_1,\ldots,i_n \in \{1,\ldots,n\}$ and $n \geq 1$. By convention we will say that every  $(A_1,\ldots,A_N) \in GL_d(\mathbb{R})^N$  is both $0$- and $d$-dominated. It is not difficult to show using the observations made in the previous subsection that $(A_1,\ldots,A_N)$ is $k$-dominated if and only if $(A_1^{\wedge k},\ldots,A_N^{\wedge k})$ is $1$-dominated. Various characterisations of domination -- in terms of invariant splittings, singular values, contraction on projective spaces and contraction on Grassmannians -- were explored by J. Bochi, N. Gourmelon, M. Barnsley and A. Vince in \cite{BoGo09,BaVi12}. In particular it was shown in \cite{BoGo09} that a compact set of invertible  matrices is $1$-dominated if and only if it satisfies a slightly weakened form of multipositivity in which the criteria of Definition \ref{de:mc} all hold except that the sets $\mathcal{K}_j$ are not assumed to be convex. By repeating iteratively the operations of replacing each set $\mathcal{K}_j$ with its convex hull (which may introduce overlaps) and uniting overlapping sets $\mathcal{K}_j$ (which may introduce non-convexity but reduces the number of sets $\mathcal{K}_j$ to be considered) one may prove the following result by inductive descent on the number of sets $\mathcal{K}_j$:
\begin{proposition}[\cite{BaVi12}]\label{pr:app}
Let $\mathsf{A}\subset M_d(\mathbb{R})$ be compact, and suppose that every $A \in \mathsf{A}$ is invertible. Then $\mathsf{A}$ is $1$-dominated if and only if it is multipositive. 
\end{proposition}
An obvious consequence of this proposition is that for every $k=0,\ldots,d$ every compact set of invertible matrices $\mathsf{A}\subset M_d(\mathbb{R})$ is $k$-dominated if and only if it is $k$-multipositive.

\subsection{The main theorem}

In order to state our main theorem we require just a few more items of notation. For each $N \geq 1$ let us define
\[\Sigma_N^*:=\bigcup_{n=1}^\infty \{1,\ldots,N\}^n.\]
If $\iii=(i_k)_{k=1}^n \in \Sigma_N^*$ we write $|\iii|=n$ and refer to $|\iii|$ as the \emph{length} of $\iii$. If $\iii,\jjj \in \Sigma_N^*$ we let $\iii\jjj \in \Sigma_N^*$ denote the sequence of length $|\iii|+|\jjj|$ obtained by running first through the symbols of $\iii$ and then through those of $\jjj$ in the obvious fashion. Clearly $\Sigma_N^*$ is a semigroup with respect to the operation $(\iii,\jjj) \mapsto \iii\jjj$. If $A_1,\ldots,A_N \in M_d(\mathbb{R})$ and $\iii=(i_k)_{k=1}^n \in \Sigma_N^*$ then we write $A_\iii:=A_{i_n}\cdots A_{i_1}$. We observe that $A_\iii A_\jjj =A_{\jjj \iii}$ for all $\iii,\jjj \in \Sigma_N^*$. 

If $B$ is a linear transformation of a finite-dimensional real vector space we let $\lambda_1(B),\ldots,\lambda_d(B)$ denote the eigenvalues of $B$ listed with repetition according to multiplicity and listed in decreasing order of absolute value. While this notation \emph{a priori} introduces ambiguities when distinct eigenvalues of the same modulus exist, we will see that this consideration does not affect the statements of our results.

We may now present the following generalisation of Pollicott and Vytnova's result:
\begin{theorem}\label{th:main}
Let $d,N \geq 2$, let $(A_1,\ldots,A_N) \in M_d(\mathbb{R})^N$ and let $0 \leq k <d$. Suppose that $(A_1,\ldots,A_N)$ is both $k$-multipositive and $(k+1)$-multipositive. For each integer $n \geq 1$ and $s \in \mathbb{R}$ define
\[t_n(s):=\sum_{|\iii|=n} \frac{\lambda_1\left(A_\iii^{\wedge k}\right)^{{d \choose k} -1} \lambda_1\left(A_\iii^{\wedge\left(k+1\right)}\right)^{{d \choose k+1} -1} \rho\left(A_\iii^{\wedge k}\right)^{k+1-s} \rho\left(A_\iii^{\wedge \left(k+1\right)}\right)^{s-k} }{ p'_{A_\iii^{\wedge k}}\left(\lambda_1\left(A_\iii^{\wedge k}\right)\right)p'_{A_\iii^{\wedge \left(k+1\right)}}\left(\lambda_1\left(A_\iii^{\wedge \left(k+1\right)}\right)\right) }
\]
where $p_B'(x_0)$ denotes the first derivative of the characteristic polynomial $p_B(x):=\det (xI-B)$ evaluated at the point $x_0$. Define also
\[a_n(s):= \frac{(-1)^n}{n!}\det\begin{pmatrix}
t_1(s)& n -1&  0&\cdots &0 &0\\
t_2(s)&t_1(s)&n-2 &\cdots &0 &0\\
t_3(s)&t_2(s)&t_1(s) &\ddots &0 &0\\
\vdots & \vdots & \vdots & \ddots  &\ddots& \vdots\\
t_{n-1}(s) &t_{n-2}(s)&t_{n-3}(s)&\cdots &t_1(s) &1\\
t_n(s) &t_{n-1}(s)&t_{n-2}(s)&\cdots &t_2(s) &t_1(s)
\end{pmatrix}\]
for all $n \geq 1$, and $a_0(s):=1$. 
For each $s \in [k,k+1]$ let $r_n(s)$ denote the smallest positive real root of the polynomial $p_{n,s}(x):=\sum_{i=0}^n a_n(s)x^i$. Then there exists $n_0 \in \mathbb{N}$ such that $r_n(s)$ is well-defined for all $s \in [k,k+1]$ and $n\geq n_0$, and we have
\[\left|e^{P(A_1,\ldots,A_N;s)}-\frac{1}{r_n(s)}\right| \leq K\exp\left(-\gamma n^\alpha\right)\]
for some constants $K,\gamma>0$ not depending on $s \in [k,k+1]$, where 
\[\alpha:=\frac{{d+1\choose k+1}-1}{{d+1\choose k+1}-2}>1.\]
Suppose additionally that there is a norm $\vertle{\cdot}$ on $\mathbb{R}^d$ such that $\max_{1 \leq i \leq N} \vertle{A_i}<1$, and that $\dimaff (A_1,\ldots,A_N) \in (k,k+1)$. Then for all sufficiently large $n$ the function $s \mapsto 1/r_n(s)$ is strictly decreasing and convex on $[k,k+1]$ and there exists a unique $s_n \in [k,k+1]$ such that $r_n(s_n)=1$. 
There exist constants $K',\gamma'>0$ depending on $A_1,\ldots,A_N$ such that for all such $n$ we have
\[\left|\dim_{\mathsf{aff}} (A_1,\ldots,A_N)- s_n\right| \leq K'\exp\left(-\gamma' n^\alpha\right).\]
\end{theorem}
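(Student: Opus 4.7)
The plan is to generalise Pollicott--Vytnova's construction by building a single holomorphic transfer operator on a product of two projective spaces associated with $\wedge^k \mathbb{R}^d$ and $\wedge^{k+1} \mathbb{R}^d$.  First, I invoke Proposition \ref{pr:app} together with Theorem \ref{th:bogo}: the two multipositivity hypotheses yield strictly forward-invariant open real multicones in $\mathbb{RP}^{{d\choose k}-1}$ and $\mathbb{RP}^{{d\choose k+1}-1}$.  Using the transverse-defining vectors as affine charts, these real multicones can be thickened to bounded complex neighbourhoods $\Omega_k$ and $\Omega_{k+1}$ whose closures are mapped strictly into their respective interiors by each complex-linear extension of $A_i^{\wedge k}$ and $A_i^{\wedge (k+1)}$.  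On the Bergman space of $\Omega := \Omega_k \times \Omega_{k+1}$ I would introduce the weighted composition operator
\[ (\mathcal{L}_s f)(u,v) := \sum_{i=1}^N \|A_i^{\wedge k} u\|^{k+1-s}\,\|A_i^{\wedge (k+1)} v\|^{s-k}\, f\!\left(A_i^{\wedge k}\cdot u,\,A_i^{\wedge (k+1)}\cdot v\right), \]
with the usual line-bundle conventions absorbing the choice of lift.  Because $\|A^{\wedge k}u\|^{k+1-s}\|A^{\wedge (k+1)}v\|^{s-k}$ reconstructs $\varphi^s$ for $s \in [k,k+1]$, the standard Perron--Frobenius--Ruelle argument on the multicone identifies the leading eigenvalue of $\mathcal{L}_s$ as $e^{P(A_1,\ldots,A_N;s)}$.

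Second, I would quantify the compactness.  Each summand is a holomorphic strict contraction of $\overline{\Omega}$ into $\Omega$ multiplied by a bounded factor, and the complex dimension of $\Omega$ equals
\[ m := \left({d\choose k}-1\right) + \left({d\choose k+1}-1\right) = {d+1\choose k+1} - 2, \]
so standard singular-value estimates for weighted composition operators on Bergman spaces of strongly pseudoconvex domains give $\mu_j(\mathcal{L}_s) \leq C e^{-c j^{1/m}}$ uniformly for $s \in [k,k+1]$.  Hadamard's inequality then bounds the Taylor coefficients of the Fredholm determinant $d(s,z) := \det(I - z\mathcal{L}_s)$ by $K \exp(-\gamma n^{(m+1)/m}) = K \exp(-\gamma n^\alpha)$, and the Plemelj--Smithies identity matches these coefficients to the determinantal formula in the theorem statement provided $t_n(s) = \tr \mathcal{L}_s^n$.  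I would evaluate this trace by the holomorphic Atiyah--Bott fixed-point theorem: each word $\iii$ of length $n$ contributes through the unique attracting fixed point in the invariant multicone, namely the pair of top eigendirections of $A_\iii^{\wedge k}$ and $A_\iii^{\wedge (k+1)}$.  At that fixed point the weight multiplier contributes the factor $\rho(A_\iii^{\wedge k})^{k+1-s} \rho(A_\iii^{\wedge (k+1)})^{s-k}$; the linearisations on the two projective factors are diagonalised by the remaining spectra of $A_\iii^{\wedge k}$ and $A_\iii^{\wedge (k+1)}$, and a direct residue computation expresses the Jacobian denominators as $p'_{A_\iii^{\wedge k}}(\lambda_1(A_\iii^{\wedge k}))/\lambda_1(A_\iii^{\wedge k})^{{d\choose k}-1}$ and $p'_{A_\iii^{\wedge (k+1)}}(\lambda_1(A_\iii^{\wedge (k+1)}))/\lambda_1(A_\iii^{\wedge (k+1)})^{{d\choose k+1}-1}$, reproducing $t_n(s)$ exactly.

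With $d(s,z)$ identified, the truncated polynomial $p_{n,s}(z) := \sum_{i=0}^n a_i(s) z^i$ differs from $d(s,z)$ by a tail bounded uniformly on compact sets by $K e^{-\gamma n^\alpha}$.  The leading eigenvalue of $\mathcal{L}_s$ is simple and separated from the rest of its spectrum uniformly for $s \in [k,k+1]$, so $z = e^{-P(s)}$ is a simple zero of $d(s,\cdot)$ bounded away from all other zeros by a constant margin; a quantitative Rouch\'e argument then places the smallest positive root $r_n(s)$ of $p_{n,s}$ within $O(e^{-\gamma n^\alpha})$ of $e^{-P(s)}$, yielding the first assertion.  For the refined statement, the extra contraction hypothesis together with $\dimaff(A_1,\ldots,A_N) \in (k,k+1)$ ensures that $s \mapsto P(s)$ is strictly decreasing, real-analytic and convex on $[k,k+1]$ with nonvanishing derivative at $\dimaff$; convexity and strict monotonicity are inherited by $s \mapsto 1/r_n(s)$ for large $n$ after the super-exponentially small perturbation, giving existence and uniqueness of $s_n$, and a quantitative inverse function argument converts the uniform estimate on $|1/r_n(s) - e^{P(s)}|$ into the same $O(e^{-\gamma' n^\alpha})$ bound on $|s_n - \dimaff(A_1,\ldots,A_N)|$.

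The hardest part will be the construction of the joint complexified multicone and verification of the optimal singular-value decay.  When the multicone has several components the complex thickening must be carried out so that strict invariance survives on the product $\Omega_k \times \Omega_{k+1}$, which is genuinely a joint condition rather than a conjunction of two independent ones because the components of the map $(A_i^{\wedge k}, A_i^{\wedge (k+1)})$ are derived from the common matrix $A_i$; one must exploit this linkage to produce a joint strict contraction rather than only separate contractions on each factor.  Attaining the sharp exponent $1/m$ in the singular-value estimate, and hence the precise $\alpha$ of the statement, requires more than general nuclearity of composition operators and rests on the bounded pseudoconvex geometry of $\Omega$ combined with the simultaneous $k$- and $(k+1)$-domination provided by the multipositivity hypotheses.
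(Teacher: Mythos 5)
Your overall architecture is essentially the paper's: a weighted composition operator on the Bergman space of a product of complexified multicone sections, a Fredholm determinant truncation, a Rouch\'e/implicit-function step, and a convexity argument for the refined statement.  Two substantive issues are, however, papered over.  First, the weight $\|A_i^{\wedge k}u\|^{k+1-s}\|A_i^{\wedge(k+1)}v\|^{s-k}$ as written is \emph{not} holomorphic in $(u,v)$; the phrase ``line-bundle conventions absorbing the choice of lift'' does not fix this, since no choice of lift makes the Euclidean norm of a complex vector holomorphic.  The paper instead uses the linear functionals $\langle A_i^{\wedge k}z_1,w_1\rangle$ and $\langle A_i^{\wedge(k+1)}z_2,w_2\rangle$ (corrected by a sign so that the fractional power is single-valued), which requires the whole machinery of \S3 -- in particular the Rugh--Dubois theory of complex cones -- to establish that these functionals never vanish on the complexified domain and that the induced projective maps genuinely contract a \emph{complex} neighbourhood, not merely a real one.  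As the paper stresses with the $f(x)=\tfrac12\sin 100x$ example, contraction of a real domain by a real-analytic map does not automatically extend to a complex neighbourhood; your sketch assumes this extension without justification.  Second, your assertion that the leading eigenvalue of $\mathcal{L}_s$ is simple and isolated is taken for granted, whereas the paper's proof requires a genuinely nontrivial Kre\u{\i}n--Rutman argument on a quotient space, partly because the limit set of the projective IFS can be so small that restriction to it is not injective on holomorphic functions.

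Conversely, you overestimate one difficulty: you write that the complexification must be ``a joint condition rather than a conjunction of two independent ones'' because $A_i^{\wedge k}$ and $A_i^{\wedge(k+1)}$ are correlated.  The paper explicitly (after Theorem \ref{th:topaff}) notes that its construction uses nothing about this correlation -- the operator factors through the product of two \emph{independent} multicone complexifications, and the same proof would apply to arbitrary $m$ unrelated multipositive tuples.  You also appeal to strong pseudoconvexity of $\Omega$; the relevant singular-value estimate (Theorem \ref{th:banjen}, due to Bandtlow--Jenkinson) holds for any bounded open set compactly absorbing its image, and the complexified domain here is generally not strongly pseudoconvex and may be disconnected, which is why the paper needs its Theorem \ref{t:thrace} extending the Ruelle--Mayer--Fried trace formula to disconnected domains.
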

Since every matrix tuple is $0$-multipositive, in the case $k=0$ the hypothesis of Theorem \ref{th:main} reduces to the requirement that $\dimaff (A_1,\ldots,A_N)$ is $1$-multipositive and $\dimaff(A_1,\ldots,A_N) \in (0,1)$. Since $B^{\wedge 0}$ is the identity map on $\mathbb{R}$ the expressions involving $A_\iii^{\wedge k}$ reduce to $1$ in the case $k=0$, resulting in the formula
\[t_n(s):=\sum_{|\iii|=n} \frac{\lambda_1\left(A_\iii\right)^{d-1} \rho\left(A_\iii\right)^s}{p'_{A_\iii}\left(\lambda_1\left(A_\iii\right)\right) }.
\]
In particular when $d=2$, $k=0$ and the matrices $A_i$ have positive entries we may recover the conclusion of  Theorem \ref{th:one}. Similarly, since every tuple in $GL_d(\mathbb{R})^N$ is $d$-multipositive and $B^{\wedge d}=\det B$, the expressions involving $A_\iii^{\wedge(k+1)}$ simplify when $k=d-1$ yielding
\[t_n(s):=\sum_{|\iii|=n} \frac{\lambda_1\left(A_\iii^{\wedge (d-1)}\right)^{d-1} \rho\left(A_\iii^{\wedge(d-1)}\right)^{d-s} |\det A_\iii|^{s+1-d}}{p'_{A_\iii^{\wedge(d-1)}}\left(\lambda_1\left(A_\iii^{\wedge(d-1)}\right)\right) }.
\]
and the hypotheses are reduced to the requirement that $(A_1,\ldots,A_N)$ is $(d-1)$-multipositive and $\dimaff(A_1,\ldots,A_N)\in (d-1,d)$. We remark that hypotheses of domination and positivity analogous to those in Theorem \ref{th:main} have been a feature of numerous recent works on affine iterated function systems such as \cite{BaKa17,BaKaKo17,BaRa18,FaKe17,FaKe18} as well as the older article \cite{HuLa95}. 

If it is known that the tuple $(A_1^{\wedge k},\ldots,A_N^{\wedge k})$ preserves a single cone in $\wedge^k\mathbb{R}^d$ and similarly $(A_1^{\wedge (k+1)},\ldots,A_N^{\wedge (k+1)})$ preserves a single cone in $\wedge^{k+1}\mathbb{R}^d$ then the condition $\dimaff (A_1,\ldots,A_N) \in (k,k+1)$ may be easily checked. A theorem of V. Yu. Protasov \cite{Pr10} implies that if $B_1,\ldots,B_N$ preserve a cone then
\[\lim_{n \to \infty} \left(\sum_{i_1,\ldots,i_n=1}^N \left\|B_{i_1}\cdots B_{i_n}\right\|\right)^{\frac{1}{n}}=\rho\left(\sum_{i=1}^N B_i\right),\]
and so in this case
\[\lim_{n \to \infty} \left(\sum_{i_1,\ldots,i_n=1}^N \varphi^k\left(A_{i_1}\cdots A_{i_n}\right)\right)^{\frac{1}{n}}=\rho\left(\sum_{i=1}^N A_i^{\wedge k}\right),\]
\[\lim_{n \to \infty} \left(\sum_{i_1,\ldots,i_n=1}^N \varphi^{k+1}\left(A_{i_1}\cdots A_{i_n}\right)\right)^{\frac{1}{n}}=\rho\left(\sum_{i=1}^N A_i^{\wedge (k+1)}\right)\]
using the identity $\varphi^\ell(B)=\|B^{\wedge \ell}\|$ for $\ell=0,\ldots,d$. 
It follows that in this situation Theorem \ref{th:main} is applicable if
\[\rho\left(\sum_{i=1}^N A_i^{\wedge (k+1)}\right)<1<\rho\left(\sum_{i=1}^N A_i^{\wedge k}\right).\]
An example of this situation is presented in \S\ref{se:apps} below.

In the situation where $(A_1,\ldots,A_N)$ fails to be both $k$- and $(k+1)$-multipositive we believe it to be unlikely that any analogue of Theorem \ref{th:main} can be proved. The precise role of the multipositive hypothesis is discussed in more detail in the following section, and in the final section \S\ref{se:nondom}.

\section{Overview of the method and statement of the main technical theorem}
  
The method underlying Theorem \ref{th:main} is, like Theorem \ref{th:one}, based on Fredholm determinants of transfer operators, and in broad terms resembles many other arguments of this type such as \cite{JePo02,JePo05,JePo16,Po10,PoJe00,PoVy16,PoWe08}. Both in order to give a sense of the organisation of this article and to indicate those complications present in the proof of Theorem \ref{th:main} which do not occur in the context of Theorem \ref{th:one} let us briefly describe this strategy. For simplicity we will specialise our description to the situation in which the transfer operators act on a Hilbert space, although this is not a strict requirement.

 We recall that an operator $\mathscr{L}$ on an infinite-dimensional Hilbert space is called trace-class if the sequence of approximation numbers
\[\mathfrak{s}_n(\mathscr{L}):=\inf\left\{\|\mathscr{L}-\mathscr{F}\| \colon \text{rank }\mathscr{F}<n\right\}\]
is summable; we observe in particular that such an operator is compact (being a limit in the norm topology of a sequence of finite-rank operators) and cannot be invertible. We also observe that clearly $\mathfrak{s}_n(\mathscr{L}^\ell) \leq \|\mathscr{L}^{\ell-1}\|  \mathfrak{s}_n(\mathscr{L})$ for every $n,\ell \geq 1$ and consequently every power of a trace-class operator is also trace-class. The notion of trace-class operator is reviewed in detail for the reader's convenience in \S\ref{se:prelims}.
 Suppose then that  $\mathscr{H}$ is a separable complex Hilbert space and $\mathscr{L} \colon \mathscr{H} \to \mathscr{H}$ a trace-class linear operator, and let $(\lambda_\ell)_{\ell=1}^\infty$ be the sequence of nonzero eigenvalues of $\mathscr{L}$ listed with repetition according to their algebraic multiplicity. (If only $M<\infty$ nonzero eigenvalues exist then define $\lambda_\ell=0$ for $\ell>M$.) 
 It is a classical fact that the function $z \mapsto \det(I-z\mathscr{L})$ which may be defined\footnote{The Fredholm determinant is more usually defined first by its power series and shown later to equal the infinite product given here, see e.g. \cite{Si79}; we adopt this characterisation for simplicity of presentation and because of its more direct connection with the problems being studied.} by
\[\det(I-z\mathscr{L}):=\prod_{\ell=1}^M \left(1-z\lambda_\ell\right)\]
is an entire function from $\mathbb{C}$ to $\mathbb{C}$, and moreover one may show that in the power series $\det(I-z\mathscr{L})=\sum_{n=0}^\infty a_\ell z^\ell$ the coefficients are given by $a_0=1$ and
\[a_\ell= \frac{(-1)^\ell}{\ell!}\det\begin{pmatrix}
\tr \mathscr{L} & \ell -1&  0&\cdots &0 &0\\
\tr \mathscr{L}^2&\tr \mathscr{L} &\ell-2 &\cdots &0 &0\\
\tr \mathscr{L}^3&\tr \mathscr{L}^2&\tr \mathscr{L}  &\ddots &0 &0\\
\vdots & \vdots & \vdots & \ddots  &\ddots& \vdots\\
\tr \mathscr{L}^{\ell-1} &\tr \mathscr{L}^{\ell-2}&\tr \mathscr{L}^{\ell-3}&\cdots &\tr \mathscr{L}  &1\\
\tr \mathscr{L}^{\ell} &\tr \mathscr{L}^{\ell-1}&\tr \mathscr{L}^{\ell-2}&\cdots &\tr \mathscr{L}^2 &\tr \mathscr{L} 
\end{pmatrix}\]
for $\ell \geq 1$. If we write
\[\sum_{\ell=0}^\infty a_\ell z^\ell=\det(I-z\mathscr{L})=\prod_{\ell=1}^M \left(1-z\lambda_\ell\right)\]
then by equating coefficients of $z^n$ we find (at least informally) that also
\begin{equation}\label{eq:fast}a_n= (-1)^n\sum_{i_1<i_2<\cdots <i_n} \lambda_{i_1}\cdots \lambda_{i_n}. \end{equation}
for each $n \geq 1$. Suppose now that we wished to calculate the spectral radius $\rho(\mathscr{L})$, knowing the values of the traces $\mathscr{L}^\ell$ for $\ell=1,\ldots,n$, say, and knowing also that the spectral radius is an eigenvalue of $\mathscr{L}$. The roots of $\det(I-z\mathscr{L})$ are precisely the reciprocals of the eigenvalues of $\mathscr{L}$ and therefore $\rho(\mathscr{L})^{-1}$ is the smallest positive root of $\sum_{\ell=0}^\infty a_\ell z^\ell$. In particular, the smallest positive root of $\sum_{\ell=0}^n a_\ell z^\ell$ should be a good approximation to $\rho(\mathscr{L})^{-1}$ as long as $\sum_{\ell=n+1}^\infty |a_\ell|$ is small. But if we are able to show that the eigenvalues $(\lambda_n)$ decay exponentially (or even just stretched-exponentially) in $n$, then the expression \eqref{eq:fast} implies a super-exponential decay estimate for the coefficients $a_n$. Such an estimate will hold in particular if the approximation numbers of $\mathscr{L}$ decay stretched-exponentially. In such a situation we may therefore reasonably hope that the approximation procedure just outlined provides an estimate which becomes super-exponentially more accurate as $n$ increases.

In order to implement this line of reasoning we need therefore to construct, for each $s \in [k,k+1]$, a trace-class operator $\mathscr{L}_s$ on a Hilbert space $\mathscr{H}$ such that $e^{P(A_1,\ldots,A_N;s)}$ is an eigenvalue of $\mathscr{L}_s$ and is equal to the spectral radius of $\mathscr{L}_s$, such that $\mathscr{L}_s$ is trace-class, such that the sequence of approximation numbers of $\mathscr{L}_s$ decays rapidly to zero, and such that the sequence of traces $\tr \mathscr{L}_s^n$ is easy to compute. Once such a family of operators has been constructed the result follows by relatively straightforward manipulations which, while they do not correspond precisely to any prior work, share a degree of familial resemblance with calculations occurring in numerous earlier articles such as \cite{BaJePo14,JePo01,JePo02,JePo05,JePo07,JePo16,JePoVy17,KaPo15,Po11,PoFe14,PoJe00,PoVy15,PoVy16,PoWe08}. 

If $V$ is a finite-dimensional real vector space let $PV$ denote the real projective space of lines through the origin in $V$. Intuitively, in order to construct an operator $\mathscr{L}_s$ with spectral radius $e^{P(A_1,\ldots,A_N;s)}$, we might consider an operator acting on some space of continuous functions $P(\wedge^k\mathbb{R}^d)\times P(\wedge^{k+1}\mathbb{R}^d) \to \mathbb{C}$ defined by
\[\left(\mathscr{L}_sf\right)(\overline{u},\overline{v}) = \sum_{i=1}^N \left(\frac{\left\|A^{\wedge k}_iu\right\|}{\|u\|}\right)^{k+1-s}\left(\frac{\left\|A^{\wedge(k+1)}_iv\right\|}{\|v\|}\right)^{s-k} f\left(\overline{A^{\wedge k}_iu},\overline{A^{\wedge (k+1)}_iv}\right)\]
where for $v \in V$ the notation $\overline{v}$ represents the one-dimensional subspace spanned by the vector $v$. Since we would then have
\[\left(\mathscr{L}_s^nf\right)(\overline{u},\overline{v}) = \sum_{|\iii|=n} \left(\frac{\left\|A^{\wedge k}_\iii u\right\|}{\|u\|}\right)^{k+1-s}\left(\frac{\left\|A^{\wedge(k+1)}_\iii v\right\|}{\|v\|}\right)^{s-k} f\left(\overline{A^{\wedge k}_\iii u},\overline{A^{\wedge (k+1)}_\iii v}\right)\]
for each $n\geq 1$ we might then reasonably expect that
\[\lim_{n \to \infty} \left\|\mathscr{L}_s^n\right\|^{\frac{1}{n}}= \lim_{n\to \infty}\left(\sum_{|\iii|=n} \left\|A^{\wedge k}_\iii\right\|^{k+1-s}\left\|A^{\wedge(k+1)}_\iii\right\|^{s-k}\right)^{\frac{1}{n}}= \lim_{n\to \infty}\left(\sum_{|\iii|=n}\varphi^s(A_\iii)\right)^{\frac{1}{n}}\]
so that $e^{P(A_1,\ldots,A_N;s)}$ is equal to the spectral radius of $\mathscr{L}_s$. Indeed, such operators were successfully constructed by Guivarc'h and Le Page on spaces of H\"older continuous functions $P(\wedge^k\mathbb{R}^d)\times P(\wedge^{k+1}\mathbb{R}^d) \to \mathbb{C}$ in the article \cite{GuLe04}.

However, notwithstanding the (rather minor) additional complications posed by the fact that the spaces defined above are not Hilbert, there is no reason to believe that $\mathscr{L}_s$ acting on such a space should have a summable sequence of approximation numbers $\mathfrak{s}_n(\mathscr{L}_s)$. Indeed, $\mathscr{L}_s$ as constructed is equal to a sum of weighted composition operators $f \mapsto g\cdot f \circ T$ where $T$ is an invertible transformation of $P(\wedge^k\mathbb{R}^d)\times P(\wedge^{k+1}\mathbb{R}^d)$ and $g$ is nowhere zero. Such an operator might reasonably be expected to be invertible, and there is certainly no reason to believe that $\mathscr{L}_s$ should be trace-class.

The problem is thus to define $\mathscr{L}_s$ approximately as above in such a way that it is a sum of trace-class, non-invertible operators. It is here that the hypothesis of $k$- and $(k+1)$-multipositivity becomes relevant: this hypothesis implies that for $\ell=k,k+1$ the matrices $A_1^{\wedge \ell},\ldots,A_N^{\wedge \ell}$ map a finite union of patches of $P(\wedge^\ell \mathbb{R}^d)$ strictly inside itself. By taking $\mathscr{H}$ to be a suitable Hilbert space of functions defined only on the patches, composition with the projective action of the matrices should then induce an operator which is non-invertible and hopefully trace-class. It transpires that composition operators on spaces of holomorphic functions are reliably trace-class subject to moderate geometrical conditions, and as such our strategy will involve passing to a space of holomorphic functions defined on complex extensions of the patches in real projective space. Once we have verified that such an extension can be constructed in such a way that the operator $\mathscr{L}_s$ is well-defined on the patches we may proceed to prove Theorem \ref{th:main} along the lines outlined above. 

In the two-dimensional context of Theorem \ref{th:one} the construction of these complex patches is very straightforward. Since Theorem \ref{th:one} is restricted to affine transformations whose linear parts contract the positive cone in $\mathbb{R}^2$, it is sufficient to consider the projective action of those linear maps on the interval $\{(x,1-x)\colon x \in [0,1]\}$, which is an action by linear fractional transformations. A finite collection of linear fractional transformations each of which maps an interval strictly inside itself can easily be shown to also map a corresponding complex disc inside itself, and this complex disc can be used as the domain of the holomorphic functions on which the operator $\mathscr{L}_s$ acts. In higher dimensions and using multicones instead of cones, the corresponding problem is to understand (in place of one-dimensional intervals) a family of $(d-1)$-dimensional sections of cones in $\mathbb{R}^d$ -- in effect, a finite collection of arbitrary $(d-1)$-dimensional convex bodies -- and a collection of linear fractional transformations between them, and to contrive a system of extensions of those convex bodies into $\mathbb{C}^{d-1}$ which is also preserved by the same family of linear fractional transformations. This much more involved procedure is undertaken in \S\ref{se:complex-cones} and lays the foundation for following technical theorem which is obtained subsequently: 
\begin{theorem}\label{th:opter}
Let $d,N \geq 2$ and let $(A_1,\ldots,A_N)\in M_d(\mathbb{R})^N$ be both $k$-multipositive and $(k+1)$-multipositive, where $0\leq k<d$.  Then there exist a separable complex Hilbert space $\mathscr{H}$ and a family of bounded linear operators $\mathscr{L}_s \colon \mathscr{H} \to \mathscr{H}$ defined for all $s \in \mathbb{C}$ with the following properties:
\begin{enumerate}[(i)]
\item
There exist $C,\kappa,\gamma>0$ such that for all $s \in \mathbb{C}$ and $n \geq 1$ we have $\mathfrak{s}_n(\mathscr{L}) \leq C\exp\left(\kappa|s|-\gamma n^\beta\right)$, where
\[\beta:=\frac{1}{{d+1 \choose k+1}-2} \in (0,1].\]
In particular each $\mathscr{L}_s$ is trace-class.
\item
For every $s \in \mathbb{C}$ and $n \geq 1$ we have
\[\tr \mathscr{L}_s^n=\sum_{|\iii|=n} \frac{\lambda_1\left(A_\iii^{\wedge k}\right)^{{d \choose k} -1} \lambda_1\left(A_\iii^{\wedge\left(k+1\right)}\right)^{{d \choose k+1} -1} \rho\left(A_\iii^{\wedge k}\right)^{k+1-s} \rho\left(A_\iii^{\wedge \left(k+1\right)}\right)^{s-k} }{ p'_{A_\iii^{\wedge k}}\left(\lambda_1\left(A_\iii^{\wedge k}\right)\right)p'_{A_\iii^{\wedge \left(k+1\right)}}\left(\lambda_1\left(A_\iii^{\wedge \left(k+1\right)}\right)\right) }
\]
where $p_B(x):=\det(xI-B)$ denotes the characteristic polynomial of $B$ and $p'_B(x_0)$ its derivative evaluated at $x_0$.
\item
For every $s \in \mathbb{R}$ the spectral radius of $\mathscr{L}_s$ is equal to
\[\lim_{n \to \infty} \frac{1}{n}\log \sum_{|\iii|=n} \left\|A_{\iii}^{\wedge k}\right\|^{k+1-s}  \left\|A_{\iii}^{\wedge (k+1)}\right\|^{s-k}.\]
In particular the above limit exists for all $s \in \mathbb{R}$, and for every $s \in [k,k+1]$ the spectral radius of $\mathscr{L}_s$ is equal to $e^{P(A_1,\ldots,A_N;s)}$. For all $s \in \mathbb{R}$ the spectral radius of $\mathscr{L}_s$ is a simple eigenvalue of $\mathscr{L}_s$ and there are no other eigenvalues of the same modulus.
\end{enumerate}
\end{theorem}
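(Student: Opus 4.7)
The plan is to realize the operators $\mathscr{L}_s$ as sums of weighted composition operators acting on a Bergman-type space of holomorphic functions on carefully chosen complex neighbourhoods of the real projective patches associated to the multicones.

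First I would use the $k$- and $(k+1)$-multipositivity hypotheses to extract multicones $(\mathcal{K}_j^{(\ell)})$ in $\wedge^\ell \mathbb{R}^d$ for $\ell = k, k+1$. Using each multicone's transverse-defining vector, $\mathcal{K}_j^{(\ell)}$ descends to a $(\binom{d}{\ell} - 1)$-dimensional convex body $\mathcal{P}_j^{(\ell)}$ in an affine slice, and the action of each $A_i^{\wedge \ell}$ becomes a linear fractional transformation between these bodies that, by Definition \ref{de:mc}(iii), sends each source body strictly inside the interior of some target body. I then need open neighbourhoods $\mathcal{D}_j^{(\ell)} \subset \mathbb{C}^{\binom{d}{\ell} - 1}$ of the $\mathcal{P}_j^{(\ell)}$ that are still strictly mapped into one another by the same linear fractional transformations. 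I expect this to be the main obstacle, since in the two-dimensional setting of Theorem \ref{th:one} it reduces to fitting discs inside intervals, whereas for arbitrary convex bodies of arbitrary shape one must carefully design the complex extensions -- this is precisely what Section \ref{se:complex-cones} will accomplish.

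With the complex patches in hand, I would take $\mathscr{H}$ to be a weighted Bergman space of holomorphic functions on $\bigsqcup_{j,\ell} \mathcal{D}_j^{(k)} \times \mathcal{D}_\ell^{(k+1)}$ and define
\begin{equation*}
(\mathscr{L}_s f)(\overline{u},\overline{v}) = \sum_{i=1}^N \biggl(\frac{\|A_i^{\wedge k} u\|}{\|u\|}\biggr)^{k+1-s} \biggl(\frac{\|A_i^{\wedge (k+1)} v\|}{\|v\|}\biggr)^{s-k} f\bigl(\overline{A_i^{\wedge k} u}, \overline{A_i^{\wedge (k+1)} v}\bigr),
\end{equation*}
interpreting the norm-ratio multipliers through suitably chosen holomorphic branches on each patch (which exist because the quantity under the power is nonvanishing there). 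For (i), I would appeal to the classical theory of composition operators on Bergman spaces of bounded domains with strict self-contraction: on a complex-$D$-dimensional domain the approximation numbers decay as $\exp(-c n^{1/D})$, and here $D = (\binom{d}{k}-1) + (\binom{d}{k+1}-1) = \binom{d+1}{k+1} - 2$, matching the exponent $\beta$ in the statement; the factor $\exp(\kappa|s|)$ absorbs the operator-norm bound on the multiplier, which grows at most exponentially in $|s|$.

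Property (ii) follows from the Atiyah--Bott--type trace formula for weighted composition operators on Bergman spaces: $\tr(\mathscr{M}_g \mathscr{C}_T) = g(z_0)/\det(I - T'(z_0))$ when $T$ has a unique attracting fixed point $z_0$. Expanding $\mathscr{L}_s^n$ into a sum over words $\iii$ of length $n$, the fixed point in each patch corresponds to the leading eigenvectors of $A_\iii^{\wedge k}$ and $A_\iii^{\wedge (k+1)}$, which are real, positive, and simple under the multipositivity hypothesis via a Perron--Frobenius argument on the invariant cones. In affine coordinates the derivative of the projective map at this eigendirection has spectrum $\{\lambda_j/\lambda_1 : j \geq 2\}$, giving $\det(I - DT_\iii^{(\ell)}) = \lambda_1(A_\iii^{\wedge\ell})^{-(\binom{d}{\ell}-1)} p'_{A_\iii^{\wedge\ell}}(\lambda_1(A_\iii^{\wedge\ell}))$, while the multiplier evaluated at the eigenvector contributes $\rho(A_\iii^{\wedge k})^{k+1-s}\rho(A_\iii^{\wedge(k+1)})^{s-k}$. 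Multiplying and summing recovers the stated formula. For (iii), iterating the operator definition shows that $\|\mathscr{L}_s^n\|$ is comparable (with constants independent of $n$) to $\sum_{|\iii|=n} \|A_\iii^{\wedge k}\|^{k+1-s}\|A_\iii^{\wedge(k+1)}\|^{s-k}$, which equals $\sum_{|\iii|=n}\varphi^s(A_\iii)$ for $s \in [k,k+1]$ by direct expansion in singular values. Finally, simplicity and isolation of the leading eigenvalue follow by a Krein--Rutman argument applied to the natural positive cone of non-negative real-valued functions on the patches, since $\mathscr{L}_s$ preserves this cone and improves positivity after finitely many iterations by the multipositivity hypothesis.
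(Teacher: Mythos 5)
Your overall architecture---realize $\mathscr{L}_s$ as a sum of weighted composition operators on a Bergman space over complexified multicone patches, read off (i) from the stretched-exponential decay of approximation numbers on a domain of complex dimension $\binom{d+1}{k+1}-2$, obtain (ii) from an Atiyah--Bott/Ruelle fixed-point trace formula, and argue (iii) spectrally---is exactly what the paper does in its proof (stated there as Theorem \ref{th:topaff}). However, there are two concrete gaps.

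First, the weight $\bigl(\|A_i^{\wedge k}u\|/\|u\|\bigr)^{k+1-s}$ is not merely multivalued on the complex patch: the Euclidean norm itself fails to be holomorphic since $\|z\|^2 = \sum_j z_j z_j^*$ involves conjugation, so no choice of branch salvages that expression. The paper instead uses the holomorphic linear form $z\mapsto\langle A_j^{\wedge k}z,w_1\rangle$ (the pairing with the transverse-defining vector), normalized by $\sign\Re(\cdot)$ to keep the argument in the right half-plane; this is a cohomologous replacement of the norm weight that agrees with it at the periodic points and hence gives the same traces and spectral radius. You need to make this substitution, not just pick branches.

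Second, and more seriously, your Kre{\u\i}n--Rutman argument for (iii) applies the theorem to the cone of functions non-negative on the real patches, claiming $\mathscr{L}_s$ ``improves positivity after finitely many iterations.'' This fails as stated: in a Bergman space of holomorphic functions, the cone of functions non-negative on a real subset generically does \emph{not} satisfy $\mathcal{C}\cap(-\mathcal{C})=\{0\}$, because the relevant limit set $L=\bigcap_n\bigcup_{|\iii|=n}\overline{\phi_\iii(\Omega)}$ may be very small---in the extreme case where all $A_i^{\wedge k}$ share a leading eigenvector and likewise for $A_i^{\wedge(k+1)}$, it is a single point, and then there is a large space of nonzero holomorphic functions vanishing identically on $L$. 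The paper explicitly calls out this obstruction and circumvents it by passing to the quotient Hilbert space $\mathcal{H}/\mathcal{Z}$ where $\mathcal{Z}$ is the subspace of functions vanishing on $L$; Kre{\u\i}n--Rutman is applied there, producing a positive eigenfunction on $L$ only, after which an \emph{a priori} supremum bound (the estimate \eqref{eq:soup}) is used to control eigenfunctions globally from their values on $L$ and to deduce simplicity and dominance of the leading eigenvalue. Without this quotient construction (or an equivalent device), your positivity argument does not go through.
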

Theorem \ref{th:opter} is a special case of a slightly more general result, Theorem \ref{th:topaff}, which will be proved later. Theorem \ref{th:topaff} is also applied in the sequel article \cite{Mo20} to the estimation of a related invariant of tuples of matrices.

The remainder of this article is structured as follows. In \S\ref{se:complex-cones} we undertake the construction of the complex extensions of the patches in real projective space. We then review in \S\ref{se:prelims} the properties of trace-class operators which will be needed in this article and extend a standard result from this context in view of the fact that we will be working with spaces of holomorphic functions defined on a non-connected region. We then proceed in \S\ref{se:operators} to establish the properties of the operator $\mathscr{L}_s$ and deduce Theorem \ref{th:opter}. In \S\ref{se:proofs} we derive Theorem \ref{th:main} from Theorem \ref{th:opter} above. Some examples of the application of Theorem \ref{th:main} are presented in \S\ref{se:apps}. In \S\ref{se:nondom} we consider the problem of calculating the affinity dimension in situations where the hypotheses of Theorem \ref{th:main} do not apply.

We remark that sections \ref{se:proofs}--\ref{se:nondom} depend only on the statement of Theorem \ref{th:opter} and the material presented in sections 1 and 2 and as such may be read independently of sections \ref{se:complex-cones}--\ref{se:operators} in which the proof of Theorem \ref{th:opter} is prepared for and presented.


\section{Complex domains for linear semigroups acting on a multicone}\label{se:complex-cones}

Our first task in proving Theorem \ref{th:main} is to translate the matter from the context of linear maps between real cones to the context of holomorphic maps between complex domains. We will prove the following:
\begin{theorem}\label{th:multicones}
Let $d \geq 1$ and let $(\mathcal{K}_1,\ldots,\mathcal{K}_m)$, $(\mathcal{K}_1',\ldots,\mathcal{K}_{m}')$ be multicones in $\mathbb{R}^d$, both with transverse-defining vector $w \in \mathbb{R}^d$, such that $\mathcal{K}_j' \setminus\{0\} \subset \Int \mathcal{K}_j$ for each $j=1,\ldots,m$. Define
\[\mathfrak{A}:=\left\{A \in M_d(\mathbb{R}) \colon A\left(\bigcup_{j=1}^m \mathcal{K}_j\right) \subseteq \bigcup_{j=1}^{m} \left(\mathcal{K}_j' \cup -\mathcal{K}_j'\right)\right\}\]
and let $\mathfrak{A}^*$ denote the set of all nonzero elements of $\mathfrak{A}$. We observe that $\mathfrak{A}$ is a semigroup.

Then there exists a subset $\Omega$ of the complex hyperplane $\{z \in \mathbb{C}^d \colon \langle z,w\rangle=1\}$ such that the following properties are satisfied by $\mathfrak{A}^*$ and $\Omega$:
\begin{enumerate}[(i)]
\item\label{it:aly}
There is a constant $\tau>0$ such that $\|A_1 A_2 \| \geq \tau \|A_1\|\cdot\|A_2\|$ for every $A_1,A_2 \in \mathfrak{A}$. In particular $\mathfrak{A}^*$ is a subsemigroup of $\mathfrak{A}$.
\item\label{it:omega}
The set $\Omega$ is open and bounded and is symmetric with respect to complex conjugation. Every connected component of $\Omega$ intersects $\mathbb{R}^{d_i}$. The closures of the connected components of $\Omega$ are disjoint.
\item\label{it:relower}
There exists $C>0$ such that 
\[C^{-1}\|A\|\leq \left|\Re(\langle Az,w\rangle)\right| \leq |\langle Az,w\rangle| \leq C \|A\|\]
for all $A \in \mathfrak{A}$ and $z \in \Omega$.
\item\label{it:induce}
Every $A \in \mathfrak{A}^*$ induces a well-defined holomorphic transformation $\phi_A \colon \Omega \to \Omega$ defined by $\phi_A(z):=\langle Az,w\rangle^{-1}Az$. The set 
\[\overline{\bigcup_{A \in \mathfrak{A}^* } \phi_A(\Omega)}\]
is a compact subset of $\Omega$.
\item\label{it:contract}
There exist a metric $\mathsf{d}$ on $\Omega$ which is bi-Lipschitz equivalent to the standard metric and a constant $\theta \in (0,1)$ such that $\mathsf{d}(\phi_A(z_1),\phi_A(z_2)) \leq \theta\mathsf{d}(z_1,z_2)$ for every $A \in \mathfrak{A}^*$.
\item\label{it:det}
Let $A \in \mathfrak{A}^*$. Then the largest eigenvalue $\lambda_1(A)$ of $A$ is algebraically simple, is real, is strictly larger in modulus than all of the other eigenvalues of $A$, and has a corresponding eigenvector $z_A \in \Omega \cap \mathbb{R}^d$ which is the unique fixed point of $\phi_A \colon \Omega \to \Omega$. The eigenvalues of the derivative $D_{z_A}\phi_A$ are precisely the numbers $\lambda_j(A)/\lambda_1(A)$ for $j=2,\ldots,d$, and in particular
\[\det (I-D_{z_A}\phi_A)= \frac{p_A'(\lambda_1(A))}{\lambda_1(A)^{d-1}} \neq 0\]
where $p_A(x):=\det (xI-A)$ denotes the characteristic polynomial of $A$ and $p_A'$ its first derivative.
\end{enumerate}
\end{theorem}
Theorem \ref{th:multicones} is trivial in the case $d=1$ and for the remainder of this section we shall ignore this case, assuming at all times that $d \geq 2$. (When $d=1$ the determinant in (\ref{it:det}) above will be interpreted as being equal to $1$.) Here and throughout the remainder of this article we use the notation $z^*$ to denote the complex conjugate of $z \in \mathbb{C}$ and reserve the notation $\overline{z}$ for the one-dimensional subspace spanned by $z$.

Using the machinery of complex cones and gauges (see \cite{Du09,Ru10}) it is possible to obtain Theorem \ref{th:multicones} by extending each real cone $\mathcal{K}_j$ to a complex cone
\[\mathcal{K}_j^{\mathbb{C}}:=\left\{\lambda((u+v)+i(u-v))\colon \lambda \in \mathbb{C}\text{ and }u,v \in \mathcal{K}_j\right\}\]
and considering the projective action on a slice through the complex extension of the union of the cones $\mathcal{K}_1,\ldots,\mathcal{K}_m$,
\[\Omega:=\left\{z \in \mathbb{C}^d \colon z \in \bigcup_{j=1}^m \Int \mathcal{K}_j^{\mathbb{C}} \text{ and }\langle z,w\rangle=1\right\}.\]
This procedure has the advantage of explicitness and may be a useful direction of research in the event that effective versions of Theorem \ref{th:main} are sought. It is on the other hand somewhat laborious to implement, and since our interest is only in establishing the correctness of the formulas in Theorem \ref{th:main} and giving a super-exponential bound for the error term, we pursue a simpler but less explicit construction along the lines of \cite[\S2]{BaJe08b}. 

\subsection{The action on the real multicone}

We begin by establishing some preliminary results concerning the action of $\mathfrak{A}$ on the real cones $\mathcal{K}_1,\ldots,\mathcal{K}_m$ and proceed to prove Theorem \ref{th:multicones} in the following subsection. 
\begin{lemma}\label{le:cones-elementary}
Let $d \geq 1$ and let $(\mathcal{K}_1,\ldots,\mathcal{K}_m)$, $(\mathcal{K}_1',\ldots,\mathcal{K}_{m}')$ be multicones in $\mathbb{R}^d$, both with transverse-defining vector $w \in \mathbb{R}^d$, such that $\mathcal{K}_j'\setminus \{0\} \subseteq  \Int\mathcal{K}_j$ for each $j=1,\ldots,m$. Define
\[\mathfrak{A}:=\left\{A \in M_d(\mathbb{R}) \colon A\left(\bigcup_{j=1}^m \mathcal{K}_j\right) \subseteq \bigcup_{j=1}^{m} \left(\mathcal{K}_j' \cup -\mathcal{K}_j'\right)\right\}\]
and observe that $\mathfrak{A}$ is a semigroup. Then there exists $\tau \in (0,1]$ such that:
\begin{enumerate}[(i)]
\item
For every $u \in \bigcup_{j=1}^m  \mathcal{K}_j$ we have $\tau \|u\| \leq \langle u,w\rangle \leq \|u\|$.
\item
For every $A \in \mathfrak{A}$ and $u \in \bigcup_{j=1}^m  \mathcal{K}_j'$ we have $\|Au\| \geq \tau \|A\|\cdot \|u\|$.
\item
For every $A_1,A_2 \in \mathfrak{A}$ we have $\|A_1 A_2\|\geq \tau \|A_1\|\cdot\|A_2\|$. In particular the set of all nonzero elements of $\mathfrak{A}$ is a subsemigroup of $\mathfrak{A}$.
\end{enumerate}
\end{lemma}
\begin{proof}
We will allow the constant $\tau>0$ to be different in each of (i),(ii) and (iii), which obviously suffices. To prove (i) it is sufficient, by homogeneity, to consider only those cases in which $\|u\|=1$. The function $u \mapsto \langle u,w\rangle$ is obviously continuous on the set of all $u \in \bigcup_{j=1}^m  \mathcal{K}_j$ such that $\|u\|=1$ and is positive everywhere on this set by the definition of a multicone. Since this set is compact this function attains its minimum, so this minimum is positive; call it $\tau$. We have $0<\tau \leq \langle u,w\rangle \leq 1$ for all $u \in \bigcup_{j=1}^m  \mathcal{K}_j$ with $\|u\|=1$ and the result follows.

By homogeneity in $A$ and $u$ it is sufficient to prove (ii) in the case $\|A\|=\|u\|=1$. By a similar compactness argument it suffices to show that $Au$ may not be zero when $A \in \mathfrak{A}$, $u \in \bigcup_{j=1}^m  \mathcal{K}_j'$ and $\|A\|=\|u\|=1$. For a contradiction suppose that we may find such $A$ and $u$ satisfying $Au=0$. Since $A \neq 0$ there exists a unit vector $v$ such that $Av \neq 0$. Since $u$ is a nonzero element of some $\mathcal{K}_j'$ it is an interior point of the corresponding cone $\mathcal{K}_j$ and therefore there exists $\epsilon>0$ such that $u+\epsilon v$ and $u-\epsilon v$ both belong to $\mathcal{K}_j$. But this implies that $A(u+\epsilon v)=\epsilon Av$ and $A(u-\epsilon v)=-\epsilon Av$ are both nonzero elements of $A\mathcal{K}_j$. Since $A\mathcal{K}_j \subseteq \mathcal{K}_i \cup -\mathcal{K}_i$ for some $i$ we deduce that $Av \in (\mathcal{K}_i \cap -\mathcal{K}_i) \setminus \{0\}$ contradicting the definition of a multicone. The result follows. To deduce (iii) we observe that for any unit vector $u \in \bigcup_{j=1}^m \mathcal{K}_j'$ we have
\[\|A_1A_2\| \geq \|A_1A_2u\| \geq \tau \|A_1\|\cdot \|A_2u\| \geq \tau^2 \|A_1\|\cdot \|A_2\|\cdot\|u\|= \tau^2 \|A_1\|\cdot \|A_2\|\]
 by repeated application of (ii).
\end{proof}
The following Perron-Frobenius result does not follow in a completely direct manner from standard statements of the Perron-Frobenius theorem for cones since it is possible for $A(\mathcal{K}_i\setminus\{0\})$ to include the zero vector, preventing the direct use of off-the-shelf results.
\begin{lemma}\label{le:pft}
Let $d$, $w$, $(\mathcal{K}_1,\ldots,\mathcal{K}_m)$, $(\mathcal{K}_1',\ldots,\mathcal{K}_m')$ and $\mathfrak{A}^*$ be as in the statement of Theorem \ref{th:multicones}. Suppose that $A \in \mathfrak{A}^*$ satisfies $A\mathcal{K}_i \subseteq \mathcal{K}_i'$ for some $i \in \{1,\ldots,m\}$. Then $\rho(A)$ is an algebraically simple eigenvalue of $A$ with corresponding eigenvector in $\mathcal{K}_i'$ and all other eigenvalues of $A$ are of strictly smaller absolute value.
\end{lemma}
\begin{proof}
Choose a cone $\mathcal{K}_i''$ such that $\mathcal{K}_i''\setminus \{0\} \subset \Int \mathcal{K}_i$  and $\mathcal{K}_i' \setminus \{0\} \subseteq \Int \mathcal{K}_i''$. We observe that the single matrix $A$, the one-element multicone $(\mathcal{K}_i)$ and the one-element multicone $(\mathcal{K}_i'')$ together satisfy the hypotheses of Lemma \ref{le:cones-elementary}, and by part (ii) of that lemma it follows that $Av$ is not the zero vector for any nonzero $v \in \mathcal{K}_i''$. In particular $A(\mathcal{K}_i''\setminus\{0\})\subseteq \mathcal{K}_i'\setminus\{0\} \subseteq \Int\mathcal{K}_i''$ and standard versions of the Perron-Frobenius Theorem such as \cite[Theorem 1.3.26]{BePl94} may be applied to the action of $A$ on $\mathcal{K}_i''$. The result follows.\end{proof}

\begin{proposition}\label{pr:real}
Let $d$, $w$, $(\mathcal{K}_1,\ldots,\mathcal{K}_m)$, $(\mathcal{K}_1',\ldots,\mathcal{K}_m')$ and $\mathfrak{A}^*$ be as in the statement of Theorem \ref{th:multicones}. Then there exist $C>0$ and $\theta \in (0,1)$ such that for all $j=1,\ldots,m$ and $n \geq 1$, for all nonzero $v_1,v_2 \in \mathcal{K}_j'$,
\[\sup_{A_1,\ldots,A_n \in \mathfrak{A}^*} \left\|\frac{A_1\cdots A_nv_1}{\langle A_1\cdots A_nv_1,w\rangle} -\frac{A_1\cdots A_nv_1}{\langle A_1\cdots A_nv_1,w\rangle}\right\| \leq C\theta^n\|v_1-v_2\|.\]
\end{proposition}
\begin{proof}
For every nonzero $v \in \mathbb{R}^d$ let $\overline{v}$ denote the one-dimensional subspace of $\mathbb{R}^d$ spanned by $v$, and let $\mathcal{K}_j/\sim$ denote the set of one-dimensional subspaces spanned by an element of $\Int \mathcal{K}_j$. For each $j=1,\ldots,m$ define
\[\alpha(v_1,v_2):=\sup\left\{\lambda \geq 0 \colon v_2-\lambda v_1 \in \mathcal{K}_j\right\}\]
and
\[\beta(v_1,v_2):=\inf\left\{\lambda \geq 0 \colon \lambda v_1- v_2 \in \mathcal{K}_j\right\}\]
for all $v_1,v_2 \in \Int\mathcal{K}_j$; then the formula
\[d_{\mathcal{K}_j}(\overline{v_1},\overline{v_2}):=\log\frac{\beta(v_1,v_2)}
{\alpha(v_1,v_2)}\]
defines a metric on $\mathcal{K}_j/\sim$ called the Hilbert projective metric.  It follows from Lemma \ref{le:cones-elementary}(i) that the set of all $v \in \bigcup_{j=1}^m \mathcal{K}_j$ such that $\langle v,w\rangle=1$ is bounded. By compactness it follows that there exists $\varepsilon \in (0,1]$ such that for every $j=1,\ldots,m$, if $v \in \mathcal{K}_j'$ with $\langle v,w\rangle=1$ then the open Euclidean $\varepsilon$-ball centred at $v$ is a subset of $\mathcal{K}_j$. We deduce that if $v_1,v_2 \in \mathcal{K}_j'$ with $\langle v_1,w\rangle=\langle v_2,w\rangle=1$ then since $\|v_1\|$, $\|v_2\|\leq \tau^{-1}$ by Lemma \ref{le:cones-elementary}(i) we have $\alpha(v_1,v_2)\geq \varepsilon\tau$ and $\beta(v_1,v_2) \leq \varepsilon^{-1}\tau^{-1}$, and hence the quantity
\[\Delta:=\max_{1 \leq j \leq m} \sup_{\substack{v_1,v_2 \in \mathcal{K}_j'\\ v_1,v_2 \neq 0}} d_{\mathcal{K}_j}(\overline{v_1},\overline{v_2}) \]
is finite. In particular if $v_1,v_2 \in \mathcal{K}_j\setminus \{0\}$ for some $j \in \{1,\ldots,m\}$, and $A \in \mathfrak{A}^*$, then $d_{\mathcal{K}_{i}}(\overline{Av_1},\overline{Av_2})\leq \Delta$ where $i$ is the unique integer such that $A\mathcal{K}_j \subseteq \mathcal{K}_{i} \cup -\mathcal{K}_{i}$. It follows by e.g. \cite[Theorem 1.1]{Li95} that if $A \in \mathfrak{A}^*$, $\overline{v_1},\overline{v_2} \in \mathcal{K}_j/\sim$ and $A\mathcal{K}_j \subseteq \mathcal{K}_i \cup -\mathcal{K}_i$ then we have $d_{\mathcal{K}_i}(\overline{Av_1},\overline{Av_2}) \leq \theta d_{\mathcal{K}_j}(\overline{v}_1,\overline{v_2})$ where $\theta:=\tanh (\Delta/4) \in (0,1)$.

We claim that there exists $C_1>0$ such that if $v_1,v_2 \in \mathcal{K}_j'$ with $\langle v_1,w\rangle=\langle v_2,w\rangle=1$ then
\[C_1^{-1} \|v_1-v_2\|\leq \left(e^{d_{\mathcal{K}_j}(\overline{v_1},\overline{v_2})}-1\right)\leq C_1\|v_1-v_2\|.\]
Indeed, given such vectors $v_1,v_2 \in \mathcal{K}_j'$ with $v_1 \neq v_2$ let $\alpha:=\alpha(v_1,v_2)$ and $\beta:=\beta(v_1,v_2)$. Since $\mathcal{K}_j$ is closed the supremum in the definition of $\alpha$ is attained, and therefore we have $v_2-\alpha v_1 \in \mathcal{K}_j$. Similarly we have $\beta v_1-v_2 \in \mathcal{K}_j$. From the maximality of $\alpha$ and the minimality of $\beta$ it follows that  $v_2-\alpha v_1$ and $\beta v_1-v_2$ are boundary points of $\mathcal{K}_j$. Since $\langle v_1-v_2,w\rangle=0$ and $v_1-v_2\neq 0$ neither $v_1-v_2$ nor $v_2-v_1$ can belong to $\mathcal{K}_j$, so neither $\alpha$ nor $\beta$ may equal $1$ and we deduce that $\alpha<1<\beta$. 

To obtain the first of the two claimed inequalities we observe that $\beta v_1-v_2$ and $(1-\alpha)v_1$ belong to $\mathcal{K}_j$, where we have used $0<\alpha<1$. Hence $(\beta-\alpha+1)v_1-v_2 \in \mathcal{K}_j$. If $\tau>0$ is as given by Lemma \ref{le:cones-elementary}(i) then we have  
\begin{align*}\tau \|v_1-v_2\| &\leq \tau \|(\beta-\alpha+1)v_1-v_2\| + \tau \|(\beta-\alpha)v_1\|\\
&\leq |\langle (\beta-\alpha+1)v_1-v_2,w\rangle| + (\beta-\alpha)|\langle v_1,w\rangle|\\
&\leq 2(\beta-\alpha)< 2\left(\frac{\beta}{\alpha}-1\right)=2\left(e^{d_{\mathcal{K}_j}(\overline{v_1},\overline{v_2})}-1\right)\end{align*}
where we have again used $0<\alpha<1$ in the final line. This yields the first inequality. To obtain the second inequality define $u_1:=\frac{1}{\beta-1}(\beta v_1-v_2)$ and $u_2:=\frac{1}{1-\alpha}(v_2-\alpha v_1)$. We observe that both $u_1$ and $u_2$ belong to the boundary of $\mathcal{K}_j$, which implies $\|u_1-v_1\|, \|u_2-v_2\| \geq \varepsilon\tau$ by the definition of $\varepsilon$ and the bound $\|v_1\|$, $\|v_2\|\leq \tau^{-1}$. We now observe that 
\begin{align*}e^{d_{\mathcal{K}_j}(\overline{v_1},\overline{v_2}) }=\frac{\beta}{\alpha}&=\frac{\|u_1-v_2\|\cdot \|u_2-v_1\|}{\|u_1-v_1\|\cdot \|u_2-v_2\|}\\
& \leq \left(\frac{\|u_1-v_1\|+\|v_1-v_2\|}{\|u_1-v_1\|}\right)\left(\frac{\|u_2-v_2\|+\|v_2-v_1\|}{\|u_2-v_2\|}\right)\\
&\leq\left(1+\varepsilon^{-1}\tau^{-1}\|v_1-v_2\|\right)^2\end{align*}
and therefore
\[e^{d_{\mathcal{K}_j}(\overline{v_1},\overline{v_2})}-1 \leq \left(\frac{2}{\varepsilon\tau} + \frac{1}{\varepsilon^2\tau^2}\|v_1-v_2\|\right) \|v_1-v_2\|\leq\left(\frac{4}{\varepsilon^2\tau^3}\right)\|v_1-v_2\|\]
where we have again used $\|v_1\|$,$\|v_2\| \leq \tau^{-1}$. The claim follows.

We may now prove the proposition. Given $n \geq 1$, nonzero $v_1,v_2 \in \mathcal{K}_{j}'$ and $A_1,\ldots,A_n \in \mathfrak{A}^*$, let $i\in \{1,\ldots,m\}$ be the integer such that $A_n\cdots A_1 \mathcal{K}_{j} \subseteq \mathcal{K}_{i}\cup -\mathcal{K}_i$. We have
\begin{align*}\left\|\frac{A_n\cdots A_1v_1}{\langle A_n \cdots A_1v_1,w\rangle}-\frac{A_n\cdots A_1v_2}{\langle A_n \cdots A_1v_2,w\rangle} \right\| &\leq C_1\left(e^{d_{\mathcal{K}_{i}}(\overline{A_n\cdots A_1v_1},\overline{A_n\cdots A_1v_2})}-1\right) \\
&\leq C_1\left(e^{\theta^n d_{\mathcal{K}_{j}}(\overline{v_1},\overline{v_2})}-1\right) \\
&\leq C_1^2\theta^n\|v_1-v_2\|\end{align*}
and the proposition is proved.
\end{proof}
While Proposition \ref{pr:real} will provide us with a vital contraction estimate for maps between specific cones $\mathcal{K}_j$, in order to apply it we will need the following combinatorial lemma which allows us to reduce the action of a specific matrix product on the multicone to that on a single cone:
\begin{lemma}\label{le:combo}
Let $d$, $w$, $(\mathcal{K}_1,\ldots,\mathcal{K}_m)$, $(\mathcal{K}_1',\ldots,\mathcal{K}_m')$ and $\mathfrak{A}^*$ be as in the statement of Theorem \ref{th:multicones}. Let $k \geq 2^m-m-1$. Then for every $A_1,\ldots,A_{k}\in\mathfrak{A}^*$ there exists $i \in \{1,\ldots,m\}$ such that
\[A_{k}\cdots A_1 \left(\bigcup_{j=1}^m \mathcal{K}_j\right) \subseteq \mathcal{K}_i \cup -\mathcal{K}_i.\]
\end{lemma}
\begin{proof}
It is clearly sufficient to consider the case $k=2^m-m-1$ only. Let $\mathcal{I}_0:=\{1,\ldots,m\}$ and for each $n=1,\ldots,2^m-m-1$ let $\mathcal{I}_n$ denote the intersection of all sets $\mathcal{I}\subseteq \{1,\ldots,n\}$ such that 
\begin{equation}\label{eq:ttt}A_{n}\cdots A_1 \left(\bigcup_{j=1}^m \mathcal{K}_j \right) \subseteq \bigcup_{i \in \mathcal{I}} \left(\mathcal{K}_i \cup -\mathcal{K}_i\right)\end{equation}
where the union over an empty set of indices $i$ is understood to be $\{0\}$. We observe that $\mathcal{I}=\mathcal{I}_n$ itself satisfies \eqref{eq:ttt}. By Lemma \ref{le:cones-elementary}(iii) the product $A_{2^m-m-1}\cdots A_1$ is not the zero matrix and therefore $\mathcal{I}_n$ is nonempty. We observe that the cardinality of $\mathcal{I}_n$ is non-increasing as a function of $n$.

We wish to prove that $\mathcal{I}_{2^m-m-1}$ has cardinality $1$, so for a contradiction let us suppose that its cardinality is at least $2$. This implies that every preceding $\mathcal{I}_n$ also has cardinality at least $2$, and also that $m\geq 2$. Since the number of subsets of $\{1,\ldots,m\}$ with cardinality at least $2$ is $2^m-m-1$, by the pigeonhole principle there exist integers $n_1,n_2$ with $0 \leq n_1<n_2 \leq 2^m-m-1$ such that $\mathcal{I}_{n_1}=\mathcal{I}_{n_2}$. The matrix $B:=A_{n_2}\cdots A_{n_1+1}$ therefore takes each cone $\mathcal{K}_i$ such that $i \in \mathcal{I}_{n_1}$ to a nontrivial subset of some cone $\mathcal{K}_j$ such that $j \in \mathcal{I}_{n_1}$, inducing a permutation on the elements of $\mathcal{I}_{n_1}=\mathcal{I}_{n_2}$. It follows that the matrix $B^{\#\mathcal{I}_{n_1}}$ induces the identity permutation on $\mathcal{I}_{n_1}$: for every $i \in \mathcal{I}_{n_1}$ we have $B^{\#\mathcal{I}_{n_1}} \mathcal{K}_i \subseteq (\mathcal{K}_i \cup -\mathcal{K}_i)$. Hence  $B^{2\#\mathcal{I}_{n_1}} \mathcal{K}_i \subseteq \mathcal{K}_i'$ for every $i \in \mathcal{I}_{n_1}$. By Lemma \ref{le:pft}, for every $i \in \mathcal{I}_{n_1}$ the matrix $B^{2\#\mathcal{I}_{n_1}}$ has a simple positive leading eigenvalue with a one-dimensional eigenspace which intersects $\mathcal{K}_i$ nontrivially: but since $\#\mathcal{I}_{n_1}\geq 2$ and distinct cones $\mathcal{K}_i$ do not intersect this implies that the leading eigenvalue is not simple, which is a contradiction.
\end{proof}

\subsection{Proof of Theorem \ref{th:multicones}}

Throughout the proof we fix $d$, $\mathfrak{A}$, $(\mathcal{K}_1,\ldots,\mathcal{K}_m)$, $(\mathcal{K}_1',\ldots,\mathcal{K}_{m}')$ and $w$ as in the statement of the theorem. Part (i) of Theorem \ref{th:multicones} follows directly from Lemma \ref{le:cones-elementary} so we concentrate on parts (ii) to (vi).

Define $H:=\{z \in \mathbb{C}^d \colon \langle z,w\rangle=1\}$ and let $K_j:=\mathcal{K}_j'\cap H$ for each $j=1,\ldots,m$. Each $K_j$ is closed by definition and is bounded as a consequence of Lemma \ref{le:cones-elementary}(i).  For each $n \geq 1$ define
\[\mathfrak{A}_n^*:=\left\{A_1\cdots A_n \colon A_1,\ldots,A_n \in \mathfrak{A}^*\right\}.\]
Define a function $M \colon H \to [0,+\infty)$ by
\[M(z):=\inf\left\{|\langle Az,w\rangle| \colon A \in \mathfrak{A}\text{ and }\|A\|=1\right\}.\]
Clearly $M(z)$ is well-defined and
\[M(z)=\inf\left\{\|A\|^{-1} |\langle Az,w\rangle| \colon A \in \mathfrak{A}^*\right\}.\]
We observe that $M$ is $1$-Lipschitz continuous: given $z_1,z_2 \in U$ and $A \in \mathfrak{A}$ with $\|A\|=1$ we have
\[|\langle Az_2,w\rangle| \geq |\langle Az_1,w\rangle|- |\langle A(z_1-z_2),w\rangle| \geq M(z_1)-\|z_1-z_2\|\]
and taking the infimum over $A$ and rearranging easily yields $M(z_1)-M(z_2) \leq \|z_1-z_2\|$. The result follows by symmetry. The set $U :=\left\{z \in H \colon M(z) \neq 0\right\}$ is consequently open. We have $\bigcup_{j=1}^m K_j\subseteq U$ by Lemma \ref{le:cones-elementary}(i) and (ii) and in particular $U$ is nonempty.

We now claim that if $A \in \mathfrak{A}^*$ and $z \in U$ then necessarily $\langle Az,w\rangle^{-1}Az \in U$. If this is not the case for some $A$ and $z$ then by compactness there exists $B \in \mathfrak{A}^*$ with $\|B\|=1$ such that $\langle B(\langle Az,w\rangle^{-1}Az),w\rangle=0$, but then necessarily $\langle BAz,w\rangle=0$ which contradicts $z \in U$ since obviously $BA \in \mathfrak{A}^*$ by the semigroup property of $\mathfrak{A}^*$. The claim is proved. We deduce that for every nonzero $A \in \mathfrak{A}$ the formula $\phi_A(z):=\langle Az,w\rangle^{-1}Az$ gives rise to a well-defined holomorphic function $\phi_A \colon U \to U$. We observe that $\phi_A \circ \phi_B = \phi_{AB}$ for all $A,B \in \mathfrak{A}^*$ and that $\phi_{tA}=\phi_A$ for all real $t>0$ and all $A \in \mathfrak{A}^*$.

Let $\tau>0$ be as given by Lemma \ref{le:cones-elementary} and observe that 
\[\sup\left\{\|z\| \colon z \in \bigcup_{j=1}^m K_j\right\}\leq \tau^{-1}\]
by Lemma \ref{le:cones-elementary}(i) and
\[\inf\left\{\|A\|^{-1}|\Re(\langle Az,w\rangle)|   \colon z \in  \bigcup_{j=1}^m K_j\text{ and }A \in \mathfrak{A}^*\right\} \geq \tau^2\]
by Lemma \ref{le:cones-elementary}(i) and (ii). For each $j=1,\ldots,m$ define
\[U_j:=\left\{z \in U \colon \inf_{\omega \in K_j} \|z-\omega\|<\epsilon\right\}\]
where $\epsilon>0$ is chosen small enough that the following properties hold: the sets $U_j$ have pairwise disjoint closures; $|\Re(\langle Az,w\rangle)|\geq \frac{\tau^2}{2}\|A\|$ and $\|z\| \leq 2\tau^{-1}$ for all $z \in \bigcup_{j=1}^m U_j$ and all $A \in \mathfrak{A}$; and
\begin{equation}\label{eq:omegaholds}(256\tau^{-10} + 4\tau^{-4})\epsilon<\frac{1}{4}.\end{equation}
The second condition is possible since the function $z \mapsto \inf\{\|A\|^{-1}|\Re(\langle Az,w\rangle)| \colon A \in \mathfrak{A}^*\}$ is $1$-Lipschitz continuous for the same reasons as $M$. Each $K_j$ is convex as a consequence of the definition of a multicone, so each $U_j$ is convex also.

Now let $C_1,\theta_1$ be the constants given by Proposition \ref{pr:real} and let $n_1 \geq 1$ be large enough that $C_1\theta^{n_1}_1<\frac{1}{4}$.  We claim that for every $A \in \mathfrak{A}_{n_1}^*$ the map $\phi_A$ satisfies $\|D_z\phi_A\|\leq \frac{1}{2}$ for all $z \in \bigcup_{j=1}^m U_j$. Fix $A \in \mathfrak{A}_{n_1}^*$ and observe that $\|D_\omega\phi_A\|\leq C_1\theta^{n_1}_1<\frac{1}{4}$ for all $\omega \in \bigcup_{j=1}^m K_j$ by Proposition \ref{pr:real}. 

By simple direct calculation, for all $v \in \mathbb{C}^d$ such that $\langle v,w\rangle=0$ and all $z \in U$ we have 
\[(D_z\phi_A)(v)=\langle Az,w\rangle^{-2} \left(\langle Az,w\rangle Av - \langle Av,w\rangle Az \right).\]
It follows that if $z_1,z_2 \in U_j$ then
\begin{eqnarray*}{\lefteqn{(D_{z_1}\phi_A-D_{z_2}\phi_A)(v)}} & &\\
&=  &  \langle Az_1,w\rangle^{-2}\langle Az_2,w\rangle^{-2}\left(\langle Az_2,w\rangle^2-\langle Az_1,w\rangle^2\right)\left(\langle Az_1,w\rangle Av - \langle Av,w\rangle Az_1\right)\\
& &+\langle Az_2,w\rangle^{-2} \left(\langle A(z_1-z_2),w\rangle Av - \langle Av,w\rangle A(z_1-z_2)\right)  
\end{eqnarray*}
for all $v$ in the tangent space $\{v \in \mathbb{C}^d \colon \langle z,w\rangle=0\}$. Since $|\langle Az,w\rangle|^{-2} \leq 4\tau^{-4}\|A\|^{-2}$ for all $z \in U_j$ by the definition of $U_j$, this yields the estimate
\begin{align*}\|D_{z_1}\phi_A - D_{z_2}\phi_A\| &\leq (16\tau^{-8}(\|z_1\|+\|z_2\|)(2\|z_1\|)  + 4\tau^{-4})\|z_1-z_2\|\\
& \leq (256\tau^{-10}+4\tau^{-4})\|z_1-z_2\|\end{align*}
where we have used the bound $\|z\| \leq 2\tau^{-1}$ which applies to all $z \in U_j$. In particular if $z \in U_j$ is arbitrary and $\omega \in K_j$ is chosen such that $\|z-\omega\|<\epsilon$, taking $z_1:=z$ and $z_2:=\omega$ and applying \eqref{eq:omegaholds} together with $\|D_\omega \phi_A\|<\frac{1}{4}$ yields $\|D_z\phi_A\|<\frac{1}{2}$. We conclude that $\max_{1 \leq j \leq m}\sup_{z \in U_j} \|D_z\phi_A\|\leq\frac{1}{2}$. Since each $U_j$ is convex it follows by the mean value inequality that for every $A \in \mathfrak{A}^*_{n_1}$ and $j=1,\ldots,m$ the map $\phi_A \colon U_j \to U$ is $\frac{1}{2}$-Lipschitz continuous with respect to the Euclidean metric. (We observe that this does not imply $\frac{1}{2}$-Lipschitz continuity on $\bigcup_{j=1}^m U_j$.) It follows that for every $k \geq 1$ and $A \in \mathfrak{A}^*_{kn_1}$ the map $\phi_A$ is $\frac{1}{2^k}$-Lipschitz on each $U_j$, which will be used later.

We next observe that for every $A \in \mathfrak{A}^*$ and $z_1,z_2 \in U$ there holds the Lipschitz continuity estimate
\begin{equation}\label{eq:ll}\|\phi_A(z_1)-\phi_A(z_2)\| \leq M(z_1)^{-1}M(z_2)^{-1} \min\{\|z_1\|,\|z_2\|\}\|z_1-z_2\|.\end{equation}
Clearly it is sufficient to prove this in the case $\|A\|=1$. In this case we observe that
\begin{align*}\left\|\phi_A(z_1)-\phi_A(z_2)\right\|&= |\langle Az_1,w\rangle\langle Az_2,w\rangle|^{-1}\left\|\langle Az_2,w\rangle Az_1 - \langle Az_1,w\rangle Az_2\right\| \\
& \leq M(z_1)^{-1}M(z_2)^{-1}\left\|\langle Az_2,w\rangle Az_1 - \langle Az_1,w\rangle Az_2\right\| \\
&= M(z_1)^{-1}M(z_2)^{-1} \left\|\langle Az_2,w\rangle A(z_1-z_2) + \langle A(z_2-z_1),w\rangle Az_2\right\|\\
&\leq M(z_1)^{-1}M(z_2)^{-1} \|z_2\|\cdot\|z_1-z_2\|,\end{align*}
and performing the same calculation with $z_1$ and $z_2$ interchanged obviously yields \eqref{eq:ll}. As a consequence we have
\begin{equation}\label{eq:lip2}
\|\phi_A(z_1)-\phi_A(z_2)\| \leq C_2\|z_1-z_2\|
\end{equation}
for every $z_1,z_2 \in \bigcup_{j=1}^m U_j$ and $A \in \mathfrak{A}^*$ where $C_2:=8\tau^{-5}$, using the inequalities $M(z) \geq \tau^2/2$ and $\|z\|\leq 2\tau^{-1}$ which follow from the definition of the sets $U_j$.

Let $n_2\geq 2^m-m-1$ be an integer such that for every $A \in \mathfrak{A}^*$ the map $\phi_A|_{U_j}$ is $1$-Lipschitz continuous for every $j=1,\ldots,m$. (Note that every sufficiently large multiple of $n_1$ has this property.) Fix $k$ large enough that $C_22^{-k} \leq \frac{1}{2}$ and define a metric $\mathsf{d}$ on $U$ by
\[\mathsf{d}(z_1,z_2):=\sum_{n=0}^{kn_1+n_2-1} 2^{\frac{n}{kn_1+n_2}} \sup_{A \in \mathfrak{A}^*_n} \left\|\phi_{A}(z_1)-\phi_{A}(z_2) \right\|\]
where the summand corresponding to $n=0$ is understood as $\|z_1-z_2\|$. It follows from \eqref{eq:ll} that $\mathsf{d}(z_1,z_2)$ is well-defined for all $z_1,z_2 \in U$ and its property of being a metric is obvious. For $z_1,z_2 \in \bigcup_{j=1}^m U_j$ we additionally have
\[\|z_1-z_2\| \leq \mathsf{d}(z_1,z_2) \leq  \left(\frac{C_2}{2^{\frac{1}{kn_1+n_2}}-1}\right)\|z_1-z_2\|\]
by applying \eqref{eq:lip2} and summing the geometric series, so $\mathsf{d}$ is bi-Lipschitz equivalent to the Euclidean distance when considered as a metric on $\bigcup_{j=1}^m U_j$. We observe that since every $A \in \mathfrak{A}^*$ is real, the metric $\mathsf{d}$ is symmetric with respect to complex conjugation: $\mathsf{d}(z_1,z_2)=\mathsf{d}(z_1^*,z_2^*)$ for all $z_1,z_2 \in U$.

We claim that for every $z_1,z_2 \in \bigcup_{j=1}^m U_j$ and $B \in \mathfrak{A}^*$ we have 
\begin{equation}\label{eq:doohickey}\mathsf{d}(\phi_B(z_1),\phi_B(z_2)) \leq 2^{-\frac{1}{kn_1+n_2}} \mathsf{d}(z_1,z_2).\end{equation}
To see this let $z_1,z_2 \in \bigcup_{j=1}^m U_j$ and $B \in \mathfrak{A}^*$. We have
\begin{eqnarray*}\mathsf{d}(\phi_B(z_1),\phi_B(z_2))&=&\sum_{n=0}^{kn_1+n_2-1} 2^{\frac{n}{kn_1+n_2}} \sup_{A \in \mathfrak{A}^*_n} \left\|\phi_{A}(\phi_B(z_1))-\phi_{A}(\phi_B(z_2)) \right\|\\
&\leq& \sum_{n=1}^{kn_1+n_2} 2^{\frac{n-1}{kn_1+n_2}} \sup_{A \in \mathfrak{A}^*_n} \left\|\phi_{A}(z) - \phi_{A}(\omega)\right\|\\
&=& 2^{-\frac{1}{kn_1+n_2}}\sum_{n=1}^{kn_1+n_2-1} 2^{\frac{n}{kn_1+n_2}} \sup_{A\in \mathfrak{A}^*_n} \left\|\phi_{A}(z_1) - \phi_{A}(z_2)\right\|\\
& & + 2^{\frac{kn_1+n_2-1}{kn_1+n_2}} \sup_{A \in \mathfrak{A}^*_{kn_1+n_2}} \|\phi_A(z_1)-\phi_A(z_2)\|.\end{eqnarray*}
To prove the claimed inequality it therefore suffices to show that
\[\|\phi_A(z_1)-\phi_A(z_2)\|\leq \frac{1}{2} \|z_1-z_2\|\]
for all $A \in \mathfrak{A}^*_{kn_1+n_2}$, since then the final term above is bounded by $2^{-\frac{1}{kn_1+n_2}} \|z_1-z_2\|$ and simple rearrangement yields \eqref{eq:doohickey}. 
Now, if $A \in \mathfrak{A}^*_{kn_1+n_2}$ let us write $A=A_1A_2$ where $A_1 \in \mathfrak{A}^*_{kn_1}$ and $A_2 \in \mathfrak{A}^*_{n_2}$. By Lemma \ref{le:combo} there exists $i$ such that $A_2(\bigcup_{j=1}^m \mathcal{K}_j) \subseteq \mathcal{K}_i \cup -\mathcal{K}_i$, and this clearly implies  $A_2(\bigcup_{j=1}^m\mathcal{K}_j') \subseteq \mathcal{K}_i' \cup -\mathcal{K}_i'$. Hence $\phi_{A_2}(\bigcup_{j=1}^m K_j) \subseteq K_i$. Choose $j_1,j_2$ such that $z_1 \in U_{j_1}$ and $z_2 \in U_{j_2}$ and choose $\omega_1\in K_{j_1}$ and $\omega_2 \in K_{j_2}$ such that $\|z_1-\omega_1\|<\epsilon$ and $\|z_2-\omega_2\|<\epsilon$. We have $\phi_{A_2}(\omega_1), \phi_{A_2}(\omega_2) \in K_i$, $\|\phi_{A_2}(z_1)-\phi_{A_2}(\omega_1)\|<\epsilon$ by the $1$-Lipschitz continuity of $\phi_{A_2}$ restricted to $U_{j_1}$, and likewise $\|\phi_{A_2}(z_2)-\phi_{A_2}(\omega_2)\|<\epsilon$. Thus 
$\phi_{A_2}(z_1)$ and $\phi_{A_2}(z_2)$ are both elements of $U_i$ and they satisfy $\|\phi_{A_2}(z_1)-\phi_{A_2}(z_2)\| \leq C_2\|z_1-z_2\|$ by \eqref{eq:lip2}. But $\phi_{A_1}$ is $\frac{1}{2^k}$-Lipschitz when restricted to $U_i$, so
\begin{align*}\|\phi_{A_1}(\phi_{A_2}(z_1))-\phi_{A_1}(\phi_{A_2}(z_2))\| &\leq 2^{-k}\|\phi_{A_2}(z_1)-\phi_{A_2}(z_2)\|\\
& \leq C_22^{-k}\|z_1-z_2\|\leq \frac{1}{2}\|z_1-z_2\|\end{align*}
and the claim follows.

For each $j=1,\ldots,m$ define $\Omega_j$ to be the unique connected component of the set 
\begin{equation}\label{eq:omegapartialdefn}\left\{z \in U \colon \inf_{\omega \in K_j} \mathsf{d}(z,\omega)<\epsilon\right\}\end{equation}
which intersects $K_j$. Obviously we have $K_j \subset \Omega_j$. Since $\|z_1-z_2\|\leq \mathsf{d}(z_1,z_2)$ for all $z_1,z_2 \in U$ we also have $\Omega_j \subseteq U_j$ for each $j=1,\ldots,m$. Define $\Omega:=\bigcup_{j=1}^m \Omega_j$ and observe that $z \in \Omega$ if and only if $z^* \in \Omega$ by the fact that $\bigcup_{j=1}^m K_j \subset \mathbb{R}^d$ and the fact that $\mathsf{d}$ is conjugation-symmetric. Since every $U_j$ is bounded, so is every $\Omega_j$ and therefore so is $\Omega$. The connected components of $\Omega$ are precisely the sets $\Omega_j$ and these have disjoint closures since this is true of the sets $U_j$ which contain them. Each $\Omega_j$ contains the corresponding set $K_j \subset \mathbb{R}^d$ and in particular intersects $\mathbb{R}^d$. This completes the proof of (\ref{it:omega}). We have
\[\frac{\tau^2}{2}\|A\|\leq \left|\Re(\langle Az,w\rangle)\right| \leq |\langle Az,w\rangle| \leq  2\tau^{-1}\|A\|\]
for all $z \in \Omega$ and $A \in \mathfrak{A}$ as a consequence of the definition of $U_1,\ldots,U_m$, and this completes the proof of (\ref{it:relower}). Since for every $A \in \mathfrak{A}^*$ the function $\phi_A$ maps $\bigcup_{j=1}^m K_j$ to a subset of itself, and $\phi_A$ contracts distances between points in $\bigcup_{j=1}^m U_j$ with respect to $\mathsf{d}$ by a factor of $\theta_2:=2^{-1/(kn_1+n_2)}$, it follows that
\[\overline{\bigcup_{A \in \mathfrak{A}^*} \phi_A(\Omega)} \subseteq \left\{z \in U \colon \inf_{\omega \in \bigcup_{j=1}^m K_j} \mathsf{d}(z,\omega)\leq \theta_2\epsilon\right\}\]
which is a compact subset of the set defined in \eqref{eq:omegapartialdefn}. Each $\phi_A(\Omega_j)$ is a connected subset of the set defined above and intersects one of the sets $K_i$, hence it is a subset of the set defined in \eqref{eq:omegapartialdefn} and intersects $K_i$, hence is a subset of the corresponding set $\Omega_i$, hence is a subset of $\Omega$. We conclude that $\overline{\bigcup_{A \in \mathfrak{A}^*} \phi_A(\Omega)}$ is a compact subset of $\Omega$. This completes the proof of (\ref{it:induce}) and (\ref{it:contract}). 

It remains only to prove (\ref{it:det}). Fix $A \in \mathfrak{A}^*$. Since $A^{n_2} \in \mathfrak{A}_{n_2}^*$ the matrix $A^{n_2}$ maps $\bigcup_{j=1}^m \mathcal{K}_j$ into $(\mathcal{K}_i\cup -\mathcal{K}_i)$ for some $i \in \{1,\ldots,m\}$ and in particular $A^{2n_2}$ maps $\mathcal{K}_i$ into $\mathcal{K}_i'$. It follows by Lemma \ref{le:pft} that $A^{2n_2}$ has an algebraically simple leading eigenvalue which is real and positive, has corresponding eigenvector $v_A$ in $\mathcal{K}_i'$ and is the unique eigenvalue with maximal modulus. Hence $A$ has an algebraically simple leading eigenvalue $\lambda_1(A)$ which is real (but may be negative), is the unique eigenvalue of maximal modulus, and satisfies $A v_A =\lambda_1(A)v_A$. Defining $z_A:=\langle v_A,w\rangle^{-1}v_A$ we have $z_A \in K_i \subset \Omega \cap \mathbb{R}^d$. Obviously $\phi_Az_A = z_A$ and $\langle A z_A,w\rangle = \lambda_1(A)$. By (\ref{it:contract}) there can be no other fixed points for $\phi_A$ in $\Omega$. 

Let us now calculate the eigenvalues of the derivative $D_{z_A}\phi_A$. Let $u_1,\ldots,u_d \in \mathbb{C}^d$ be a Jordan basis for  $A$ with basis elements listed in descending order of the absolute value of the corresponding eigenvalue, and with $u_1=z_A$. Since $|\lambda_1(A)|>|\lambda_2(A)|$ we have $A u_1=\lambda_1(A)u_1$ and $A u_2=\lambda_2(A)u_2$. For each $j \in \{3,\ldots,d\}$, let $\delta_j \in \{0,1\}$ such that $A u_j=\lambda_j(A )u_j+\delta_ju_{j-1}$. 

For every $v$ in the tangent space $\{v \in \mathbb{C}^d \colon \langle v,w\rangle=0\}$ to $\Omega$ at $z_A$ we have
\begin{align*}\left(D_{z_A}\phi_A\right) v&:= \lim_{\varepsilon \to 0} \frac{1}{\varepsilon}\left(\frac{A(u_1+\varepsilon v)}{\langle A(u_1 + \varepsilon v),w\rangle } - \frac{Au_1}{\langle A u_1,w\rangle } \right)\\
&=\frac{\langle A u_1,w\rangle\cdot A v- \langle A v,w\rangle\cdot A u_1 }{\langle Au_1 ,w\rangle \langle A u_1,w\rangle } \\
&=\frac{1}{\lambda_1(A)} \left(Av - \langle A v,w\rangle u_1 \right).\end{align*}
Clearly the vectors $v_j:=u_j-\langle u_j,w\rangle u_1$, where $j$ runs from $2$ to $d$, form a basis of the tangent space $\{z \in \mathbb{C}^d \colon \langle z,w\rangle=0\}$. We have 
\begin{align*}\left(D_{z_A}\phi_A\right)v_2
&=\frac{1}{\lambda_1(A)} \left(Av_2 - \langle Av_2,w \rangle u_1\right)\\
&=\frac{1}{\lambda_1(A)} \left(\lambda_2(A)u_2 -  \lambda_1(A)\langle u_2,w \rangle u_1  - \lambda_2(A)\langle u_2,w \rangle u_1 + \lambda_1(A)\langle u_2,w \rangle u_1\right)\\
&=\frac{1}{\lambda_1(A)} \left(\lambda_2(A)u_2 - \lambda_2(A)\langle u_2,w \rangle u_1\right)\\
&=\frac{\lambda_2(A)}{\lambda_1(A)} v_2,\end{align*}
and for $j=3,\ldots,d$ we similarly have
\begin{align*}\left(D_{z_A}\phi_A\right) v_j 
=&\frac{1}{\lambda_1(A)} \left(Av_j - \langle Av_j,w \rangle u_1\right)\\
=& \frac{1}{\lambda_1(A)} \big(\lambda_j(A)u_j + \delta_ju_{j-1} - \lambda_1(A)\langle u_j,w\rangle u_1\\
& - \lambda_j(A)\langle u_j,w\rangle u_1 -\delta_j\langle u_{j-1},w \rangle u_1 + \lambda_1(A)\langle u_j,w\rangle u_1\big)\\
=& \frac{1}{\lambda_1(A)} \left(\lambda_j(A)u_j -\lambda_j(A)\langle u_j,w\rangle u_1 + \delta_ju_{j-1}- \delta_j\langle u_{j-1},w \rangle u_1\right)\\
=&\frac{\lambda_j(A)}{\lambda_1(A)} v_j + \frac{\delta_j}{\lambda_1(A)} v_{j-1}.\end{align*}
It follows that with respect to the basis $v_2,\ldots,v_d$ the matrix of $D_{z_A}\phi_A$ is upper triangular with the values $\lambda_j(A)/\lambda_1(A)$ along the diagonal. In particular its eigenvalues are precisely the numbers $\lambda_j(A)/\lambda_1(A)$ for $j=2,\ldots,d$ as claimed. Since $p_A(x)=\det(xI-A)=\prod_{j=1}^d (x-\lambda_j(A))$ we have
\[p_A'(x)=\sum_{\ell=1}^d \prod_{\substack{1\leq j \leq d \\ j \neq \ell}} (x-\lambda_j(A))\]
and therefore
\[\frac{p_A'(\lambda_1(A))}{\lambda_1(A)^{d-1}} = \frac{\prod_{j=2}^d (\lambda_1(A)-\lambda_j(A))}{\lambda_1(A)^{d-1}} =\prod_{j=2}^d \left(1-\frac{\lambda_j(A)}{\lambda_1(A)}\right)=\det(I-D_{z_A}\phi_A).\]
Since $1-\lambda_j(A)/\lambda_1(A)$ is nonzero for all $j=2,\ldots,d$ this quantity is nonzero. This completes the proof of (\ref{it:det}) and hence of the theorem.


\section{Operator-theoretic preliminaries}\label{se:prelims}

In this section we collect some preliminary results which will underpin the construction of the operators $\mathscr{L}_s$ defined in Theorem \ref{th:opter}. 

\subsection{Bergman spaces}

If $\Omega \subset \mathbb{C}^k$ is open and nonempty the Bergman space $\mathcal{A}^2(\Omega)$ is defined to be the set of all holomorphic functions $f\colon \Omega \to \mathbb{C}$ such that the integral $\int_\Omega |f(z)|^2dV(z)$ is finite, where $V$ denotes $2k$-dimensional Lebesgue measure on $\mathbb{C}^k\simeq \mathbb{R}^{2k}$. The space $\mathcal{A}^2(\Omega)$ is a Hilbert space when equipped with the inner product $\langle f,g\rangle_{\mathcal{A}^2(\Omega)}:=\int_\Omega f(z)g(z)^*dV(z)$. In particular it is a closed subspace of the Hilbert space $L^2(\Omega)$ and is therefore separable. We note the following elementary estimate:
\begin{lemma}\label{le:supbound}
Let $\Omega \subseteq \mathbb{C}^k$ be a nonempty open set and let $K \subseteq \Omega$ be compact. Then there exists $C_K>0$ depending on $K$ such that $\sup_{z \in K}|f(z)| \leq C_K\|f\|_{\mathcal{A}^2(\Omega)}$ for every $f \in \mathcal{A}^2(\Omega)$.\end{lemma}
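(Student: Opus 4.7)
The plan is to exploit the mean value property of holomorphic functions in several variables, combined with Cauchy--Schwarz, to bound pointwise values by $L^2$ norms over polydiscs. This is the classical argument that makes $\mathcal{A}^2(\Omega)$ into a reproducing kernel Hilbert space; the only care needed is to choose the polydisc radius uniformly over $K$.

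First I would show that there exists $r>0$ such that for every $z\in K$ the closed polydisc $\overline{D(z,r)}:=\{w\in \mathbb{C}^k:|w_i-z_i|\leq r\text{ for }i=1,\ldots,k\}$ is a subset of $\Omega$. This follows because $K$ is compact and $\mathbb{C}^k\setminus\Omega$ is closed and disjoint from $K$, so the Euclidean distance from $K$ to the complement of $\Omega$ is strictly positive; any $r$ less than $1/\sqrt{k}$ times this distance works.

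Next I would fix $f\in \mathcal{A}^2(\Omega)$ and $z_0\in K$ and apply the iterated mean value property variable by variable. Since $f$ is holomorphic in each coordinate separately on a neighbourhood of $\overline{D(z_0,r)}$, integration of Cauchy's formula over circles of each radius between $0$ and $r$ gives
\[f(z_0)=\frac{1}{V(D(z_0,r))}\int_{D(z_0,r)} f(w)\,dV(w),\]
where $V(D(z_0,r))=(\pi r^2)^k$ depends only on $r$ and $k$. Applying Cauchy--Schwarz to this identity yields
\[|f(z_0)|\leq V(D(z_0,r))^{-1/2}\left(\int_{D(z_0,r)}|f(w)|^2\,dV(w)\right)^{1/2}\leq (\pi r^2)^{-k/2}\|f\|_{\mathcal{A}^2(\Omega)},\]
since $D(z_0,r)\subseteq \Omega$. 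Taking the supremum over $z_0\in K$ gives the claim with $C_K:=(\pi r^2)^{-k/2}$.

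There is no serious obstacle; the only subtlety is confirming that the uniform radius $r$ exists, which requires merely the positivity of the distance from $K$ to $\mathbb{C}^k\setminus\Omega$. The argument makes no use of connectedness of $\Omega$, which is convenient since the intended domain will be a disjoint union of complex patches.
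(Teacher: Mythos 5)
Your proof is correct and essentially the same as the paper's: both exploit the mean value property of holomorphic functions over a region contained in $\Omega$ with radius chosen uniformly over $K$ by compactness. The paper uses Euclidean balls and applies the mean value property to $f^2$ (avoiding Cauchy--Schwarz by using the triangle inequality directly), while you use polydiscs and apply it to $f$ followed by Cauchy--Schwarz; these are cosmetic variations of the same argument.
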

\begin{proof}
Choose $\varepsilon>0$ small enough that for every $z \in K$ the open ball $B_\varepsilon(z_0)$ is a subset of $\Omega$. By harmonicity we have
\begin{align*}|f(z_0)|^2 &=\left|\frac{1}{V(B_\varepsilon(z_0))} \int_{B_\varepsilon(z_0)} f(z)^2dV(z)\right|\\
&\leq \frac{1}{V(B_\varepsilon(z_0))}\int_{\Omega} |f(z)|^2dV(z)=\frac{1}{V(B_\varepsilon(z_0))} \|f\|_{\mathcal{A}(\Omega)}^2 = \frac{k!}{\pi^k\varepsilon^k} \|f\|_{\mathcal{A}(\Omega)}^2 \end{align*}
for all $f \in \mathcal{A}^2(\Omega)$ and $z_0 \in K$. 
\end{proof}
We observe in particular that for every $z \in \Omega$ the evaluation map $f \mapsto f(z)$ is a continuous linear functional $\mathcal{A}^2(\Omega)\to \mathbb{C}$.

In practice we will be interested in the case where $\Omega$ is a bounded open subset of an affine subspace of $\mathbb{C}^d$ rather than of $\mathbb{C}^d$ itself. Clearly the results of this section will apply equally well in that context with $k$ being equal to the dimension of the affine subspace of $\mathbb{C}^d$ of which $\Omega$ is an open subset.
\subsection{Trace-class operators}

We define the  \emph{singular values}  or \emph{approximation numbers} $\mathfrak{s}_n(\mathcal{L})$ of a bounded linear operator $\mathcal{L} \colon \mathcal{H}\to \mathcal{H}$ acting on a separable complex Hilbert space $\mathcal{H}$ to be the quantities
\[\mathfrak{s}_n(\mathcal{L}):=\inf\left\{\|\mathcal{L}-\mathcal{F}\|\colon \mathcal{F}\colon \mathcal{H} \to \mathcal{H} \text{ is bounded with rank at most }n-1\right\},\]
where $n$ ranges over the positive integers. If $\mathcal{L}$ is compact then the values $\mathfrak{s}_n(\mathcal{L})^2$ coincide with the sequence of eigenvalues of the self-adjoint operator $\mathcal{L}^*\mathcal{L}$ (see e.g. \cite[Theorem IV.2.5]{GoGoKr00}). If $\mathcal{L}$ satisfies $\sum_{n=1}^\infty \mathfrak{s}_n(\mathcal{L})<\infty$ then $\mathcal{L}$ is called \emph{trace-class}. Any trace-class operator is obviously the limit in the operator norm of a sequence of finite-rank operators and in particular is compact. It follows easily from the definition of $\mathfrak{s}_n$ that if $\mathcal{L}_1$ and $\mathcal{L}_2$ are bounded operators then $\mathfrak{s}_n(\mathcal{L}_1\mathcal{L}_2)$ and $\mathfrak{s}_n(\mathcal{L}_2\mathcal{L}_1)$ are both bounded by $\|\mathcal{L}_1\|\mathfrak{s}_n(\mathcal{L}_2)$ for every $n \geq 1$, and in particular the composition of a trace-class operator with a bounded operator is trace-class. In particular every power of a trace-class operator is trace-class.

The fundamental properties of the trace are summarised in the following result which combines several statements from \cite[\S3]{Si79}:
\begin{theorem}\label{th:lid}
Let $\mathcal{L}$ be a trace-class operator acting on a complex separable Hilbert space $\mathcal{H}$ and let $(\lambda_n)_{n=1}^M$ be a complete enumeration of the nonzero eigenvalues of $\mathcal{L}$, listed with repetition according to algebraic multiplicity, where $M \in \mathbb{N} \cup \{0,+\infty\}$. Then for every orthonormal basis $(e_n)_{n=1}^\infty$ of $\mathcal{H}$ we have
\begin{equation}\label{eq:lid}\sum_{n=1}^\infty \langle \mathcal{L}e_n,e_n\rangle = \sum_{n=1}^M \lambda_n\end{equation}
with both series being absolutely convergent. 
The common value of \eqref{eq:lid} is defined to be the \emph{trace} of $\mathcal{L}$ and is denoted $\tr \mathcal{L}$. 
\end{theorem}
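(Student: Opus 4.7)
The plan is to prove Lidskii's theorem in three stages: absolute convergence of the eigenvalue sum, basis-invariance of the operator trace, and identification of the two sums via an upper-triangular (Schur) representation of $L$.

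First I would establish that $\sum_{n=1}^M |\lambda_n|<\infty$ by appealing to Weyl's majorisation inequality: for any compact operator, ordering eigenvalues and singular values by decreasing modulus, one has $\prod_{n=1}^N|\lambda_n|\leq \prod_{n=1}^N \mathfrak{s}_n(L)$ for every $N$, from which a standard Hardy--Littlewood--P\'olya majorisation argument yields $\sum_n |\lambda_n|\leq \sum_n \mathfrak{s}_n(L)$, which is finite since $L$ is trace-class. Weyl's inequality itself rests on the minimax characterisation of singular values together with the identification $\mathfrak{s}_1(L^{\wedge N})=\mathfrak{s}_1(L)\cdots \mathfrak{s}_N(L)$ and the fact that the leading eigenvalues of $L^{\wedge N}$ are the products $\lambda_{i_1}\cdots \lambda_{i_N}$.

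Second, I would verify that for every orthonormal basis $(e_n)$ the sum $\sum_n \langle Le_n,e_n\rangle$ is absolutely convergent with a value independent of the basis. Writing the Schmidt representation $L=\sum_k \mathfrak{s}_k(L)\langle \cdot,f_k\rangle g_k$ for orthonormal sequences $(f_k),(g_k)$, two applications of Cauchy--Schwarz give
\[\sum_n |\langle Le_n,e_n\rangle| \leq \sum_k \mathfrak{s}_k(L)\left(\sum_n|\langle e_n,f_k\rangle|^2\right)^{1/2}\left(\sum_n|\langle g_k,e_n\rangle|^2\right)^{1/2}=\sum_k \mathfrak{s}_k(L),\]
which is finite, and Fubini then yields $\sum_n \langle Le_n,e_n\rangle=\sum_k \mathfrak{s}_k(L)\langle g_k,f_k\rangle$, manifestly independent of $(e_n)$. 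I would take this common value as the definition of $\tr L$.

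Third --- the heart of the argument --- I would invoke the Riesz--Schauder structural theory to obtain an orthonormal basis $(e_n)$ in which $L$ is upper triangular, $\langle Le_i,e_j\rangle=0$ for $i>j$, and the diagonal $\langle Le_n,e_n\rangle$ enumerates precisely the nonzero eigenvalues $\lambda_1,\lambda_2,\ldots$ (with algebraic multiplicity) followed by zeros. Concretely, the generalised eigenspaces $N_\ell=\ker(L-\lambda_\ell I)^{m_\ell}$ are finite-dimensional, and one arranges them in decreasing order of $|\lambda_\ell|$, applies Gram--Schmidt within each $N_\ell$ to a Jordan basis (so that $L$ is upper triangular with $\lambda_\ell$ on the diagonal in that block), and then extends orthonormally to $H$ using a basis of the orthogonal complement of the closed span $\overline{\bigoplus_\ell N_\ell}$, on which $L$ is quasi-nilpotent and hence produces zero diagonal entries in a further triangularisation. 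With respect to this basis, $\sum_n\langle Le_n,e_n\rangle =\sum_n \lambda_n$ by inspection, and combining with step two gives the equality for every orthonormal basis.

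The hard part will be the triangularisation in step three: for non-self-adjoint $L$ one must verify that the restriction of $L$ to the orthogonal complement of $\overline{\bigoplus_\ell N_\ell}$ really is quasi-nilpotent, and that its finite-rank approximations within that complement can be placed in strictly upper-triangular form without disturbing the block structure on the eigenspaces. The standard route uses the Riesz projectors $P_\ell=(2\pi i)^{-1}\oint_{|z-\lambda_\ell|=\varepsilon}(zI-L)^{-1}\,dz$ to split $L$ as $L=\bigoplus_\ell L|_{N_\ell}\oplus L'$ with $L'$ having empty non-zero spectrum, and then uses that any trace-class quasi-nilpotent operator admits a triangular basis with zero diagonal (so contributes nothing to the trace), which in turn follows from applying Gram--Schmidt to the nested chain $\ker L' \subset \ker (L')^2\subset\cdots$ and observing that $\overline{\bigcup_k\ker(L')^k}$ is the whole ambient space by quasi-nilpotency.
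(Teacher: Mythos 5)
The paper does not give a proof of Theorem \ref{th:lid} at all: it is stated as a citation to results in Simon's book \cite{Si79}, which proves Lidskii's theorem via regularised Fredholm determinants and growth estimates. Your first two stages are essentially standard and sound: Weyl's multiplicative majorisation $\prod_{n\le N}|\lambda_n|\le\prod_{n\le N}\mathfrak{s}_n(L)$ together with a Hardy--Littlewood--P\'olya argument gives $\sum_n|\lambda_n|\le\sum_n\mathfrak{s}_n(L)$, and the Schmidt-expansion computation establishes absolute convergence and basis-independence of $\sum_n\langle Le_n,e_n\rangle$.

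Your third stage, however, contains a genuine gap at precisely the point that makes Lidskii's theorem nontrivial. You reduce to showing that a trace-class \emph{quasi-nilpotent} operator $L'$ has vanishing trace, and propose to do so by triangularising $L'$ via the ascending kernel chain $\ker L'\subset\ker (L')^2\subset\cdots$, asserting that $\overline{\bigcup_k\ker(L')^k}$ is all of the ambient space ``by quasi-nilpotency.'' This is false: a quasi-nilpotent trace-class operator can be injective, in which case $\ker(L')^k=\{0\}$ for all $k$. A concrete counterexample is the weighted shift $L'e_n=a_ne_{n+1}$ on $\ell^2$ with $a_n>0$ and $\sum_na_n<\infty$: it is trace-class (approximation numbers comparable to $a_n$), quasi-nilpotent (spectral radius $\limsup_k(a_n\cdots a_{n+k})^{1/k}=0$), and injective, so your kernel chain is trivial. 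The fact that a compact quasi-nilpotent operator admits a ``triangular'' form with zero diagonal at all is Ringrose's theorem, which rests on deep invariant-subspace theory (Aronszajn--Smith/Lomonosov) rather than on kernel chains, and even then the nest of invariant subspaces may be continuous rather than discrete, so it need not come from an orthonormal basis in the way your argument requires. The standard routes to this step of Lidskii's theorem either go through the Fredholm determinant $\det(I-zL)$, matching its Hadamard factorisation against its power-series expansion using the decay of approximation numbers, or use a careful matrix-approximation argument; a naive triangularisation does not close the gap.
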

 It is clear from the definition that $\mathfrak{s}_{2n-1}(\mathcal{L}_1+\mathcal{L}_2)\leq \mathfrak{s}_n(\mathcal{L}_1)+\mathfrak{s}_n(\mathcal{L}_2)$ for every pair of bounded linear operators $\mathcal{L}_1,\mathcal{L}_2 \colon \mathcal{H} \to \mathcal{H}$ and every $n \geq 1$. It follows easily that if $\mathcal{L}_1,\ldots,\mathcal{L}_k$ are trace-class operators on $\mathcal{H}$ then any finite linear combination $\sum_{i=1}^k a_i \mathcal{L}_i$ is also trace-class and satisfies
 \[\tr \sum_{i=1}^k a_i \mathcal{L}_i = \sum_{i=1}^k a_i \tr \mathcal{L}_i\]
 as a consequence of \eqref{eq:lid}.
 
 The following result also combines several statements from \cite[\S3]{Si79}, with the exception of the determinant formula for $a_n$ which may be found instead in, for example, \cite[Theorem 6.8]{Si77} or \cite[Theorem IV.5.2]{GoGoKr00}.
 \begin{theorem}\label{th:ofundity}
Let $\mathcal{L}$ be a trace-class operator on a separable complex Hilbert space $\mathcal{H}$ and let $(\lambda_n)_{n=1}^\infty$ be an enumeration of the nonzero eigenvalues of $\mathcal{L}$, repeated according to algebraic multiplicity. (If only $M<\infty$ nonzero eigenvalues exist then we define $\lambda_n:=0$ for all $n>M$.) For every $n \geq 1$ define
\[a_n:=(-1)^n \sum_{i_1<\cdots<i_n} \lambda_{i_1}(\mathcal{L})\cdots \lambda_{i_n}(\mathcal{L})\]
and define also $a_0:=1$.  Then the function
\[\det(I-z\mathcal{L}):=\sum_{n=0}^\infty a_nz^n\]
is well-defined and entire, and is equal to the absolutely convergent infinite product $\prod_{n=1}^\infty (1-z\lambda_n)$. The zeros of $z \mapsto \det(I-z\mathcal{L})$ are precisely the reciprocals of the nonzero eigenvalues of $\mathcal{L}$ and the order of each zero is equal to the algebraic  multiplicity of the corresponding eigenvalue. The coefficients $a_n$ satisfy
\[a_n=\frac{(-1)^n}{n!}\det\begin{pmatrix}
\tr \mathcal{L} & n -1&  0&\cdots &0 &0\\
\tr \mathcal{L}^2&\tr \mathcal{L}&n-2 &\cdots &0 &0\\
\tr \mathcal{L}^3&\tr \mathcal{L}^2 &\tr \mathcal{L} &\ddots &0 &0\\
\vdots & \vdots & \vdots & \ddots  &\ddots& \vdots\\
\tr \mathcal{L}^{n-1} &\tr \mathcal{L}^{n-2}&\tr \mathcal{L}^{n-3}&\cdots &\tr \mathcal{L} &1\\
\tr \mathcal{L}^n &\tr \mathcal{L}^{n-1}&\tr \mathcal{L}^{n-2}&\cdots &\tr \mathcal{L}^2 &\tr \mathcal{L}
\end{pmatrix},\]
and
\[\left|a_n\right| \leq \sum_{i_1<\dots<i_n}\mathfrak{s}_{i_1}(\mathcal{L})\cdots \mathfrak{s}_{i_n}(\mathcal{L})\]
for all $n\geq 1$.
\end{theorem}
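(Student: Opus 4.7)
The plan is to reduce the four claims to classical facts about eigenvalue distributions for trace-class operators and combinatorial identities relating elementary symmetric polynomials to power sums. First I would invoke Weyl's inequality $\sum_{n=1}^\infty |\lambda_n(L)| \leq \sum_{n=1}^\infty \mathfrak{s}_n(L)<\infty$, valid for trace-class operators, which guarantees that the infinite product $\prod_{n=1}^\infty(1-z\lambda_n)$ converges absolutely and locally uniformly on $\mathbb{C}$ and therefore defines an entire function whose zeros (with multiplicities) are the reciprocals $1/\lambda_n$ of the nonzero eigenvalues. Identifying this entire function with its Taylor expansion around the origin gives $a_n=(-1)^n e_n(\lambda_1,\lambda_2,\ldots)$, where $e_n$ denotes the $n$-th elementary symmetric polynomial in the eigenvalue sequence.

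Next I would use Newton's identities to relate $e_n$ to the power sums $p_k:=\sum_i\lambda_i^k$. The identities
\[ke_k - e_{k-1}p_1 + e_{k-2}p_2 - \cdots + (-1)^k p_k = 0\]
form a lower-triangular linear system in $e_1,\ldots,e_n$ whose solution via Cramer's rule produces precisely the determinantal formula stated for $a_n=(-1)^n e_n$. To replace $p_k$ with $\tr L^k$, I would observe that $L$ trace-class implies $L^k$ trace-class for every $k\geq 1$ (as already noted in the excerpt), and then apply Lidskii's theorem (Theorem \ref{th:lid}) to $L^k$, noting that the nonzero eigenvalues of $L^k$, counted with algebraic multiplicity, are exactly the $k$-th powers of the nonzero eigenvalues of $L$ (a standard consequence of the spectral theorem for compact operators acting on the generalised eigenspaces of $L$).

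For the final estimate $|a_n|\leq \sum_{i_1<\cdots<i_n}\mathfrak{s}_{i_1}(L)\cdots\mathfrak{s}_{i_n}(L)$ I would use Weyl's multiplicative inequality $\prod_{k=1}^m |\lambda_k(L)|\leq \prod_{k=1}^m \mathfrak{s}_k(L)$ together with its standard consequence that every elementary symmetric function of the moduli of the eigenvalues is majorised by the corresponding elementary symmetric function of the singular values. Combined with the trivial bound $|e_n(\lambda_1,\lambda_2,\ldots)|\leq e_n(|\lambda_1|,|\lambda_2|,\ldots)$, this yields the required estimate.

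The main obstacle, such as it is, is not any single step but rather the bookkeeping required to justify the interchange of sums and products that is implicit in carrying Newton's identities, originally a finite-dimensional statement, over to an infinite sequence of eigenvalues. The standard way around this is to truncate the eigenvalue sequence at some rank $N$, apply the classical finite-dimensional identities to the characteristic polynomial of a finite-rank approximation to $L$, and pass to the limit using the continuity of $\det(I-zL)$ in the trace norm of $L$. Since all four assertions appear verbatim in Simon's monograph \cite{Si79} or in Gohberg-Goldberg-Krupnik \cite{GoGoKr00}, the argument is essentially an assembly of citations, and no novel calculation is called for.
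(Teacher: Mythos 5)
Your proposal is correct. The paper itself offers no proof of this theorem: it is stated as a compendium of classical facts and simply cites Simon's monograph \cite{Si79} for most of the assertions, with \cite{Si77} or \cite{GoGoKr00} for the Plemelj--Smithies determinantal formula. Your sketch reconstructs the standard argument behind those citations accurately: Weyl's summable inequality $\sum_n|\lambda_n|\leq\sum_n\mathfrak{s}_n$ gives convergence of the product, the Taylor expansion identifies $a_n$ with $(-1)^n e_n(\lambda)$, Newton's identities solved by Cramer's rule yield the determinant, Lidskii applied to $L^k$ (whose nonzero eigenvalues, with algebraic multiplicity, are the $k$-th powers of those of $L$ by the spectral mapping theorem) converts power sums to traces, and Weyl's multiplicative inequality majorizes the elementary symmetric functions of $|\lambda_n|$ by those of $\mathfrak{s}_n$; the truncate-and-pass-to-the-limit bookkeeping you describe via trace-norm continuity of $\det(I-zL)$ is the standard route. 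Since both the paper and your proposal defer to the same body of classical results, there is no substantive divergence to report.
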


 \subsection{Weighted composition operators on Bergman spaces}
 
 It has long been known that composition operators on Bergman spaces, and on other Banach spaces of holomorphic functions, are trace-class under mild conditions (see e.g. \cite{Gr55}). Historically most results in this context have assumed the set $\Omega \subset \mathbb{C}^k$ to be bounded and connected but in this article we will need to work with sets having multiple connected components. We will use the notation $\Omega_0 \Subset \Omega$ to mean that the closed set $\overline{\Omega_0}$ is a compact subset of the open set $\Omega$.
 
 The following result is a special case of \cite[Theorem 5.9]{BaJe08a}.
\begin{theorem}\label{th:banjen}
Let $\Omega \subseteq \mathbb{C}^k$ be a nonempty open set and let $\Omega_0\Subset \Omega$ be nonempty. Suppose that $\phi_1,\ldots,\phi_m \colon \Omega \to \Omega_0$ are holomorphic and $\psi_1,\ldots,\psi_m \colon \Omega \to \mathbb{C}$ are holomorphic and bounded. Then the operator $\mathcal{L} \colon \mathcal{A}^2(\Omega) \to \mathcal{A}^2(\Omega)$ given by
\[\left(\mathcal{L}f\right)(z):=\sum_{j=1}^m \psi_j(z) f(\phi_j(z))\]
is a well-defined bounded linear operator on $\mathcal{A}^2(\Omega)$, and there exist $C,\gamma>0$ depending only on $\Omega$ and $\Omega_0$ such that
\[\mathfrak{s}_n(\mathcal{L}) \leq C\left(\sum_{j=1}^m \sup_{z \in \Omega} |\psi_j(z)|\right)\exp\left(-\gamma n^{\frac{1}{k}}\right)\]
 for every $n \geq 1$. In particular $\mathcal{L}$ is trace class.
\end{theorem}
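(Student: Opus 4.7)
The plan is to prove the bound in three stages: first reduce to a single weighted composition operator via sub-additivity of singular values; second construct a finite-rank approximation by local Taylor truncation on a finite cover of $\overline{\Omega_0}$; and third read off the stretched-exponential rate from the standard count of multi-indices of bounded degree.

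For the reduction I would invoke the inequality $\mathfrak{s}_{n_1+n_2-1}(L_1+L_2)\le\mathfrak{s}_{n_1}(L_1)+\mathfrak{s}_{n_2}(L_2)$, iterated to give $\mathfrak{s}_{m(n-1)+1}(L_1+\cdots+L_m)\le\sum_{j=1}^{m}\mathfrak{s}_n(L_j)$, with $L_j f := \psi_j\cdot(f\circ\phi_j)$. Up to adjusting $C$ and $\gamma$ by factors depending only on $m$, this reduces the theorem to proving for a single weighted composition operator $\mathscr{L}_\psi f = \psi\cdot(f\circ\phi)$, with $\phi(\Omega)\subseteq\Omega_0\Subset\Omega$, the bound $\mathfrak{s}_n(\mathscr{L}_\psi)\le C'\|\psi\|_\infty\exp(-\gamma' n^{1/k})$.

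For the finite-rank approximation I would cover the compact set $\overline{\Omega_0}$ by finitely many open balls $B(w_1,r),\ldots,B(w_P,r)$, chosen with a fixed $R>r$ so that each closed ball $\overline{B(w_i,R)}$ still lies in $\Omega$; this is possible precisely because $\Omega_0\Subset\Omega$. Partition $\Omega_0$ into disjoint Borel pieces $E_1,\ldots,E_P$ with $E_i\subset B(w_i,r)$, and for $f\in\mathcal{A}^2(\Omega)$ and $z\in\Omega$ define
\[
(\widetilde F_N f)(z) := \psi(z)\sum_{|\alpha|<N}c_{i(z),\alpha}(f)\,(\phi(z)-w_{i(z)})^\alpha,
\]
where $i(z)$ is the unique index with $\phi(z)\in E_{i(z)}$ and $c_{i,\alpha}(f)$ denotes the Taylor coefficient of $f$ at $w_i$. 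The image of $\widetilde F_N$ lies in the finite-dimensional span of the bounded measurable functions $z\mapsto \psi(z)\mathbf{1}_{\phi^{-1}(E_i)}(z)(\phi(z)-w_i)^\alpha$ with $i\in\{1,\ldots,P\}$ and $|\alpha|<N$, so $\widetilde F_N\colon\mathcal{A}^2(\Omega)\to L^2(\Omega)$ has rank at most $P\binom{N+k-1}{k}\asymp N^k$. Composing on the left with the orthogonal projection $L^2(\Omega)\to\mathcal{A}^2(\Omega)$ produces a finite-rank operator $F_N$ on $\mathcal{A}^2(\Omega)$ of equal rank, and since $\mathscr{L}_\psi f$ is already in $\mathcal{A}^2(\Omega)$ we have $\|\mathscr{L}_\psi-F_N\|_{\mathcal{A}^2\to\mathcal{A}^2}\le\|\mathscr{L}_\psi-\widetilde F_N\|_{\mathcal{A}^2\to L^2}$.

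The error is then controlled by combining Lemma \ref{le:supbound} with Cauchy estimates: applied on balls of radius slightly less than $R$ inside $\Omega$, they yield $|c_{i,\alpha}(f)|\le C_0 R^{-|\alpha|}\|f\|_{\mathcal{A}^2(\Omega)}$, so the Taylor remainder of order $N$ satisfies
\[
\Bigl|f(w)-\sum_{|\alpha|<N}c_{i,\alpha}(f)(w-w_i)^\alpha\Bigr|\le C_1\Bigl(\frac{r}{R}\Bigr)^{\!N}\|f\|_{\mathcal{A}^2(\Omega)}
\]
for every $w\in B(w_i,r)$. Multiplying by $\|\psi\|_\infty$ and integrating over $\Omega$ gives $\mathfrak{s}_{n_N}(\mathscr{L}_\psi)\le C_2\|\psi\|_\infty (r/R)^N$ with $n_N\asymp N^k$. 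Inverting this relation and using the monotonicity of $\mathfrak{s}_n$ to interpolate between consecutive values of $N$ produces the desired bound $\mathfrak{s}_n(\mathscr{L}_\psi)\le C_3\|\psi\|_\infty\exp(-\gamma n^{1/k})$ with $\gamma$ proportional to $|\log(r/R)|$. The main obstacle is entirely the geometric one of arranging the cover so that enlargements of the $B(w_i,r)$ still fit inside $\Omega$, which is exactly what the hypothesis $\Omega_0\Subset\Omega$ delivers; once this is set up, the rest is routine Cauchy-estimate bookkeeping on the Bergman space.
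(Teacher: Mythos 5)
Your argument is essentially correct and follows the classical elementary route to this kind of estimate: reduce to a single weighted composition operator by subadditivity of approximation numbers, build finite-rank approximants by Taylor truncation on a finite cover of $\overline{\Omega_0}$, and convert the multi-index count $\asymp N^k$ into the stretched-exponential rate $\exp(-\gamma n^{1/k})$. The paper itself does not prove this statement; it cites \cite[Theorem 5.9]{BaJe08a}, whose argument proceeds more abstractly by factoring the weighted composition operator through a canonical embedding of $\mathcal{A}^2(\Omega)$ into a space of functions bounded on $\overline{\Omega_0}$ and invoking sharp approximation-number estimates for such embeddings. Your version is more hands-on and self-contained; the Bandtlow--Jenkinson version is cleaner to generalise because the geometry is confined to one embedding lemma. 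Both deliver the same exponent $1/k$.

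Two technical points deserve attention. First, your proof (and indeed the theorem as stated in the paper) tacitly assumes $\Omega$ has finite Lebesgue measure, e.g.\ that $\Omega$ is bounded: the step where you ``multiply by $\|\psi\|_\infty$ and integrate over $\Omega$'' picks up a factor of $V(\Omega)^{1/2}$, and without finiteness the operator may fail even to map $\mathcal{A}^2(\Omega)$ into itself (take $\Omega$ an unbounded strip of finite width, $\Omega_0$ a small disc, $\phi$ a constant, $\psi\equiv 1$: then $\mathscr{L}f$ is a nonzero constant, not square-integrable on $\Omega$). In the paper every $\Omega$ is bounded by construction (Theorem \ref{th:multicones}(iv)), so this is a matter of stating a hypothesis rather than a flaw in your reasoning, but you should make it explicit. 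Second, the Cauchy estimate $|c_{i,\alpha}(f)|\le C_0 \rho^{-|\alpha|}\sup|f|$ in $\mathbb{C}^k$ for $k\ge 2$ is a polydisc estimate, not a Euclidean-ball estimate; you should cover $\overline{\Omega_0}$ by polydiscs $\Delta(w_i,r)$ with $\overline{\Delta(w_i,R)}\subset\Omega$, or else accept a factor of $\sqrt{k}$ in the ratio of radii. This is cosmetic and leaves the conclusion unchanged.
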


In this article we will need  to calculate explicitly the traces of a family of operators. The following result is a minor variation on a type of result appearing in work of D. Ruelle (\cite[Lemma 1]{Ru76}), D. Mayer (\cite[\S{III}]{Ma80} and remark following \cite[Corollary 7.11]{Ma91}), D. Fried (\cite[Lemma 5]{Fr86}) and other authors. The result may be proved easily by following the second, third and fourth paragraphs of the proof of \cite[Theorem 4.2]{BaJe08b}. 
\begin{theorem}\label{pr:banjo}
Let $\Omega \subset \mathbb{C}^k$ be a bounded, connected, nonempty open set and suppose that $\phi \colon \Omega \to \Omega$ is a holomorphic function such that $\phi(\Omega)\Subset \Omega$.  Let $\psi \colon \Omega \to \mathbb{C}$ be holomorphic and bounded. Then $\phi$ has a unique fixed point $z_0 \in \Omega$, the eigenvalues of the derivative $D_{z_0}\phi$ are all strictly less than $1$ in modulus, and the operator $\mathcal{L} \colon \mathcal{A}^2(\Omega) \to \mathcal{A}^2(\Omega)$ defined by $(\mathcal{L}f)(z):=\psi(z)f(\phi(z))$ is trace-class and has trace equal to $\psi(z_0)/ \det (I-D_{z_0}\phi)$.
\end{theorem}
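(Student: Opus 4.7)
The plan is to verify the four assertions in order: existence and uniqueness of $z_0$, the spectral bound for $D_{z_0}\phi$, trace-class-ness of $\mathcal{L}$, and finally the trace formula, which is the substantive part. First I would produce the unique $z_0\in\Omega$ via the Earle-Hamilton fixed point theorem, whose hypotheses are exactly that $\phi$ be holomorphic on a bounded domain and send it into a compactly contained subset. For the spectral radius of $A:=D_{z_0}\phi$, I would iterate: since $\phi(\Omega)\Subset\Omega$, a normal-families argument combined with the uniqueness of $z_0$ shows that the images $\phi^n(\Omega)$ shrink to $\{z_0\}$. Applying Cauchy's integral formula to $D_{z_0}\phi^n=A^n$ over a fixed small polydisc around $z_0$ then forces $\|A^n\|\to 0$, so $\rho(A)<1$. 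Trace-class-ness is immediate from Theorem \ref{th:banjen} applied with $m=1$, taking any open $\Omega_0$ with $\phi(\Omega)\subseteq\Omega_0\Subset\Omega$; the stretched-exponential decay of $\mathfrak{s}_n(\mathcal{L})$ yields summability.

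For the trace formula I would translate to put $z_0=0$ and compute eigenvalues explicitly. Let $\mu_1,\ldots,\mu_k$ be the eigenvalues of $A$ listed with algebraic multiplicity in some fixed Jordan basis. For each $n\geq 0$ let $P_n\subseteq\mathcal{A}^2(\Omega)$ be the closed subspace of functions vanishing to order at least $n$ at $0$; closedness follows because each derivative evaluation at $0$ is a continuous functional on $\mathcal{A}^2(\Omega)$ by Lemma \ref{le:supbound} and Cauchy's integral formula. A short Taylor-expansion check using $\phi(z)=Az+O(|z|^2)$ shows $\mathcal{L}P_n\subseteq P_n$, so $\mathcal{L}$ descends to finite-dimensional operators $\mathcal{L}_n$ on the quotients $\mathcal{A}^2(\Omega)/P_{n+1}$. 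In the basis of monomials $z^\alpha$ with $|\alpha|\leq n$ ordered by increasing degree, $\mathcal{L}_n$ is block upper-triangular, with the degree-$d$ diagonal block equal to $\psi(0)$ times the $d$-th symmetric power of $A$, whose spectrum is $\{\mu^\alpha:|\alpha|=d\}$. Lidskii's Theorem \ref{th:lid} applied to $\mathcal{L}$ should then give
\[\tr\mathcal{L}=\sum_{\alpha\in\mathbb{Z}_{\geq 0}^k}\psi(0)\mu^\alpha=\psi(0)\prod_{i=1}^k\frac{1}{1-\mu_i}=\frac{\psi(0)}{\det(I-A)},\]
with absolute convergence guaranteed by $|\mu_i|<1$ and the identification $\det(I-A)=\prod_i(1-\mu_i)$.

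The main obstacle will be rigorously matching the eigenvalues of $\mathcal{L}$ on the infinite-dimensional space $\mathcal{A}^2(\Omega)$ with those of the truncated operators $\mathcal{L}_n$, accounting for algebraic multiplicities. Any nonzero eigenfunction $f$ of $\mathcal{L}$ vanishes to some finite order $n$ at $0$, and so descends to an eigenfunction of $\mathcal{L}_n$ with the same eigenvalue, placing every nonzero eigenvalue of $\mathcal{L}$ in the list $\{\psi(0)\mu^\alpha\}$. Converting this into the correct algebraic multiplicities for $\mathcal{L}$ requires a Riesz-projection bookkeeping step, or alternatively invoking the Fredholm-determinant identification from Theorem \ref{th:ofundity} together with the stretched-exponential approximation-number decay of Theorem \ref{th:banjen} to pass to the limit $n\to\infty$ and read the trace off from the coefficient of $z$ in $\det(I-z\mathcal{L})$. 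Once this accounting is done, the displayed formula is the final step.
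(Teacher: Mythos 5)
Your proof goes by a route that is genuinely different from the one indicated in the paper. The paper dispatches this theorem by reference to the argument of Bandtlow--Jenkinson (\cite{BaJe08b}), which works with integral kernel representations on the Bergman space; you are instead following the classical filtration argument going back to Ruelle \cite{Ru76} and Mayer \cite{Ma80}: filter $\mathcal{A}^2(\Omega)$ by order of vanishing at the fixed point, pass to the associated graded operators, and identify their spectra with $\psi(z_0)$ times eigenvalues of symmetric powers of $A:=D_{z_0}\phi$. Your preliminary steps are all sound: Earle--Hamilton gives $z_0$; the Cauchy-estimate argument on the images $\phi^n(\Omega)$ gives $\|A^n\|\to 0$ and hence $\rho(A)<1$; Theorem~\ref{th:banjen} with $m=1$ gives trace-classness; the subspaces $P_n$ are closed and $\mathcal{L}$-invariant; and the block upper-triangular structure with degree-$d$ diagonal block $\psi(0)\,\mathrm{Sym}^d(A)$ is correct, as is the evaluation $\sum_\alpha \mu^\alpha = \prod_i(1-\mu_i)^{-1}=\det(I-A)^{-1}$.

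The obstacle you flag is real, and neither of your two suggested workarounds is adequate as stated. The ``Riesz-projection bookkeeping'' direction you sketch only yields the easy inclusion: since $\bigcap_n P_n=\{0\}$ (a holomorphic function on the connected domain $\Omega$ vanishing to infinite order at $z_0$ is identically zero), the finite-rank Riesz projection $P_\lambda\mathcal{A}^2(\Omega)$ injects into $\mathcal{A}^2(\Omega)/P_N$ for large $N$, so every nonzero eigenvalue of $\mathcal{L}$ appears in the list $\{\psi(0)\mu^\alpha\}$ with multiplicity \emph{at most} that of the truncation. That inclusion does not show that each $\psi(0)\mu^\alpha$ actually occurs, nor does it control multiplicities from below, so Lidskii's theorem cannot yet be applied. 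The Fredholm-determinant ``alternative'' is circular as written: identifying $\det(I-z\mathcal{L})$ with $\prod_\alpha(1-z\psi(0)\mu^\alpha)$ presupposes either the very eigenvalue accounting you are trying to avoid, or trace-norm convergence of finite-degree truncations to $\mathcal{L}$, which you have not established.

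The gap can be closed without ever resolving individual multiplicities. Since each $P_n$ is closed, $\mathcal{L}$-invariant and of finite codimension, one has the exact decomposition $\tr\mathcal{L} = \tr\bigl(\mathcal{L}|_{P_n}\bigr)+\tr\bigl(\mathcal{L}_{\mathcal{A}^2(\Omega)/P_n}\bigr)$, the second summand being $\psi(0)\sum_{|\alpha|<n}\mu^\alpha$ by your finite-dimensional computation. For the first summand, the orthogonal projections $\Pi_n$ onto $P_n$ decrease strongly to $\Pi_{\bigcap_n P_n}=0$; for a trace-class $\mathcal{L}$ and $\Pi_n\to 0$ strongly one has $\|\Pi_n\mathcal{L}\|_{\mathrm{tr}}\to 0$ (check it on finite-rank operators and approximate), hence $\bigl|\tr(\mathcal{L}|_{P_n})\bigr|=\bigl|\tr(\Pi_n\mathcal{L}\Pi_n)\bigr|\to 0$. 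Letting $n\to\infty$ then produces $\psi(z_0)/\det(I-D_{z_0}\phi)$ directly. With this replacement the proposal is a complete and valid alternative to the cited argument.
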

Since we will in general need to study operators on Bergman spaces $\mathcal{A}^2(\Omega)$ for which $\Omega$ is not connected, we prove the following simple extension of Theorem \ref{pr:banjo} which does not seem to have been previously stated elsewhere:
\begin{theorem}\label{t:thrace}
Let $\Omega \subseteq \mathbb{C}^k$ be a bounded nonempty open set and suppose that $\phi \colon \Omega \to \Omega$ is a holomorphic function such that $\phi(\Omega)\Subset\Omega$.  Let $\psi \colon \Omega \to \mathbb{C}$ be holomorphic and bounded. Then the set of fixed points $\Fix \phi := \{z \in \Omega \colon \phi(z)=z\}$ is either finite or empty, and each connected component of $\Omega$ contains at most one fixed point of $\phi$. At each fixed point $z \in \Fix \phi$ the eigenvalues of the derivative $D_z\phi$ are all strictly less than $1$ in modulus. The operator $\mathcal{L} \colon \mathcal{A}^2(\Omega) \to \mathcal{A}^2(\Omega)$ defined by $(\mathcal{L}f)(z):=\psi(z)f(\phi(z))$ is trace-class and satisfies
\begin{equation}\label{eq:slart}\tr \mathcal{L} = \sum_{z \in \Fix \phi} \frac{\psi(z)}{ \det (I-D_{z}\phi)}.\end{equation}
Additionally, if $\Omega$ is connected then $\Fix \phi$ is a singleton.
\end{theorem}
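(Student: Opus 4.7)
The strategy is to reduce to Theorem \ref{pr:banjo} component-by-component. Since $\overline{\phi(\Omega)}$ is compact and contained in $\Omega$, the cover of $\overline{\phi(\Omega)}$ by the (open) connected components of $\Omega$ admits a finite subcover, so only finitely many components of $\Omega$ -- call them $\Omega_{j_1},\ldots,\Omega_{j_M}$ -- meet $\overline{\phi(\Omega)}$. For an arbitrary component $\Omega_i$, the image $\phi(\Omega_i)$ is connected and contained in $\Omega$, so it lies in a unique component $\Omega_{\sigma(i)}$, necessarily among the $\Omega_{j_r}$. A fixed point of $\phi$ in $\Omega_i$ forces $\phi(\Omega_i)\cap \Omega_i\neq\emptyset$, hence $\sigma(i)=i$; in particular $\Fix\phi$ is finite and contains at most one point per component of $\Omega$ (the uniqueness part coming from Theorem \ref{pr:banjo} below).

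Fixing a component $\Omega_i$ with $\sigma(i)=i$, the set $\overline{\phi(\Omega_i)}$ is a connected compact subset of $\Omega$ that meets $\Omega_i$; connectedness together with the disjointness of the components forces $\overline{\phi(\Omega_i)}\subseteq\Omega_i$, so $\phi(\Omega_i)\Subset\Omega_i$. Applying Theorem \ref{pr:banjo} to $\phi|_{\Omega_i}\colon\Omega_i\to\Omega_i$ then produces a unique fixed point $z_i\in\Omega_i$ with $D_{z_i}\phi$ having all eigenvalues of modulus strictly less than $1$, and shows that the Bergman weighted composition operator $\mathcal{L}_i\colon\mathcal{A}^2(\Omega_i)\to\mathcal{A}^2(\Omega_i)$ defined by $(\mathcal{L}_i f)(z):=\psi(z)f(\phi(z))$ is trace-class with $\tr\mathcal{L}_i=\psi(z_i)/\det(I-D_{z_i}\phi)$. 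This simultaneously delivers the derivative bound, the ``at most one fixed point per component'' clause, the finiteness of $\Fix\phi$, and, in the connected case, the singleton statement.

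For the global trace formula I would exploit the orthogonal Hilbert-space decomposition $\mathcal{A}^2(\Omega)=\bigoplus_i \mathcal{A}^2(\Omega_i)$, which follows from $L^2(\Omega)=\bigoplus_i L^2(\Omega_i)$ together with the fact that holomorphy on $\Omega$ is equivalent to holomorphy on each component. Extending $f\in\mathcal{A}^2(\Omega_i)$ by zero to $\Omega$, the function $(\mathcal{L}f)(z)=\psi(z)f(\phi(z))$ vanishes on $\Omega_j$ unless $\sigma(j)=i$, because $\phi(z)\in\Omega_{\sigma(j)}$ while $f$ is supported on $\Omega_i$. Consequently the compressed operator $P_i\mathcal{L}P_i$ is zero when $\sigma(i)\neq i$ and agrees with $\mathcal{L}_i$ when $\sigma(i)=i$. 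Choosing an orthonormal basis of $\mathcal{A}^2(\Omega)$ obtained as a union of orthonormal bases of the $\mathcal{A}^2(\Omega_i)$, Lidskii's theorem (Theorem \ref{th:lid}) applied to the trace-class operator $\mathcal{L}$ -- which is trace-class by Theorem \ref{th:banjen} -- yields
\[\tr \mathcal{L}=\sum_{i:\sigma(i)=i}\tr\mathcal{L}_i=\sum_{z\in\Fix\phi}\frac{\psi(z)}{\det(I-D_z\phi)},\]
which is \eqref{eq:slart}.

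The main obstacle, beyond the component-level bookkeeping, is ensuring that the orthonormal-basis argument is legitimate when $\Omega$ has infinitely many components; this is resolved by the compactness of $\overline{\phi(\Omega)}$, which confines the nontrivial action of $\mathcal{L}$ to the finitely many components $\Omega_{j_r}$ identified above so that the sum over $i$ has only finitely many nonzero summands. Everything else is a direct appeal to Theorems \ref{th:banjen}, \ref{pr:banjo} and \ref{th:lid}, combined with the elementary observation that connected subsets of a disjoint union of open sets lie in a single summand.
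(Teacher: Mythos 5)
Your proposal is correct and follows essentially the same strategy as the paper: decompose $\mathcal{A}^2(\Omega)$ orthogonally over the connected components of $\Omega$, reduce the analysis of each component preserved by $\phi$ (i.e.\ with $\sigma(i)=i$) to Theorem \ref{pr:banjo}, and aggregate the traces via Lidskii's theorem with the basis assembled from component-wise orthonormal bases. The only noteworthy stylistic divergence is in how finiteness of $\Fix\phi$ is obtained: you extract a finite subcover of $\overline{\phi(\Omega)}$ by components up front and observe that $\sigma$ takes values only in that finite set, which gives finiteness of $\Fix\phi$ immediately, whereas the paper first establishes the trace formula and then derives finiteness at the end from an accumulation-point argument inside the compact set $\overline{\phi(\Omega)}$. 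Both hinge on exactly the same compactness, and your ordering is marginally cleaner since it also makes transparent why the Lidskii sum over components has only finitely many nonzero terms rather than appealing to absolute convergence.
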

\begin{proof}
The number of connected components of $\Omega$ is at most countably infinite since otherwise the separability of $\mathbb{C}^k$ would be contradicted. Let $(\Omega_m)_{m=1}^M$ be an enumeration of the connected components of $\Omega$ where $M \in \mathbb{N} \cup \{\infty\}$. For each $m$, by connectedness and continuity we have either $\phi(\Omega_m) \Subset \Omega_m$ or $\phi(\Omega_m) \cap \Omega_m=\emptyset$. In the former case there is a unique fixed point of $\phi$ in $\Omega_m$ and the derivative of $\phi$ at the fixed point has all eigenvalues strictly less than $1$ in modulus by Theorem \ref{pr:banjo}. In the latter case there is obviously no fixed point in $\Omega_m$. It follows in particular that if $m=1$ then $\Fix\phi$ is a singleton as required. Moreover we observe that $\Fix \phi$ consists entirely of isolated points, is closed, and is compact since it is contained in $\overline{\phi(\Omega)}$; it is therefore finite or empty, as required. 

The operator $\mathcal{L}$ meets the hypotheses of Theorem \ref{th:banjen} and hence is trace-class, so it remains to calculate its trace. 
For each integer $m$ such that $1 \leq m \leq M$ let $(f_{m,n})_{n=1}^\infty$ be an orthonormal basis for $\mathcal{A}^2(\Omega_m)$. Extend each $f_{m,n}$ to a function $\tilde{f}_{m,n}\colon \Omega \to \mathbb{C}$ by defining $\tilde{f}_{m,n}(z):=f_{m,n}(z)$ when $z\in \Omega_m$ and $\tilde{f}_{m,n}(z):=0$ otherwise. Clearly $(\tilde{f}_{m,n})$ is an orthonormal basis for $\mathcal{A}^2(\Omega)$, so by Theorem \ref{th:lid} we have
\begin{align}\label{eq:drsmelly}\tr \mathcal{L} = \sum_{m=1}^M\sum_{n=1}^\infty \langle \mathcal{L}\tilde{f}_{m,n},\tilde{f}_{m,n}\rangle_{\mathcal{A}^2(\Omega)}&=\sum_{m=1}^M \sum_{n=1}^\infty \int_{\Omega} \psi(z)\tilde{f}_{m,n}(\phi(z))\tilde{f}_{m,n}(z)^*dV(z)\\\nonumber
&=\sum_{m=1}^M \sum_{n=1}^\infty \int_{\Omega_m} \psi(z)\tilde{f}_{m,n}(\phi(z))\tilde{f}_{m,n}(z)^*dV(z)
 \end{align}
 using the fact that each $\tilde{f}_{n,m}$ is supported on $\Omega_m$, and these series are absolutely convergent. 
 
Let us evaluate the final term of \eqref{eq:drsmelly} by considering the contribution of each $m$. For $m$ such that $\phi(\Omega_m) \cap \Omega_m=\emptyset$ the integrand is clearly identically zero for every $n \geq 1$ and the contribution of that $m$ to the total is zero. On the other hand for each $m$ such that $\phi(\Omega_m) \Subset \Omega_m$ let us define $\mathcal{L}_m \colon \mathcal{A}^2(\Omega_m) \to \mathcal{A}^2(\Omega_m)$ by $(\mathcal{L}_mf)(z):=\psi(z)f(\phi(z))$. By Theorem \ref{pr:banjo} there is a unique fixed point $z_m$ of $\phi$ in $\Omega_m$, the operator $\mathcal{L}_m$ is trace-class and
\begin{align*}\frac{\psi(z_m)}{\det(I-D_{z_m}\phi)}=\tr \mathcal{L}_m &=\sum_{n=1}^\infty \langle \mathcal{L}_mf_{m,n},f_{m,n}\rangle_{\mathcal{A}^2(\Omega_m)} \\
&= \sum_{n=1}^\infty \int_{\Omega_m} \psi(z)f_{m,n}(\phi(z))f_{m,n}(z)^*dV(z)\\
&=\sum_{n=1}^\infty \int_{\Omega_m} \psi(z)\tilde{f}_{m,n}(\phi(z))\tilde{f}_{m,n}(z)^*dV(z).
 \end{align*}
We have shown that for all $m$
\[\sum_{n=1}^\infty \int_{\Omega_m} \psi(z)\tilde{f}_{m,n}(\phi(z))\tilde{f}_{m,n}(z)^*dV(z)=\sum_{z \in \Omega_m \cap \Fix \phi} \frac{\psi(z)}{\det(I-D_z\phi)}\]
and combining this with \eqref{eq:drsmelly} yields \eqref{eq:slart}.
\end{proof}

\subsection{An operator Perron-Frobenius theorem}

The last general functional-analytic result which we will require is the following: 
\begin{theorem}[Krasnoselski\u{\i}]\label{th:kr}
Let $\mathcal{X}$ be a real Banach space and $\mathcal{C} \subseteq \mathcal{X}$ a subset such that:
\begin{enumerate}[(i)]
\item
$\mathcal{C}$ is closed and convex and satisfies $\lambda\mathcal{C}=\mathcal{C}$ for all real $\lambda>0$,
\item
$\mathcal{C} \cap -\mathcal{C}=\{0\}$,
\item
The interior of $\mathcal{C}$ is nonempty.
\end{enumerate}
Suppose that $\mathcal{L} \colon \mathcal{X} \to \mathcal{X}$ is a compact linear operator which is \emph{strongly positive}: for every nonzero $x \in \mathcal{C}$ there exists $n \geq 1$ such that $\mathcal{L}^nx \in \Int\mathcal{C}$. Then $\rho(\mathcal{L})$ is nonzero and is a simple eigenvalue of $\mathcal{L}$ whose corresponding eigenspace intersects $\Int \mathcal{C}$. Moreover there exist no other eigenvalues of $\mathcal{L}$ with modulus $\rho(\mathcal{L})$.
\end{theorem}
\begin{proof}
The strong positivity of the operator permits the application of \cite[Theorem 2.5]{Kr64} which implies that there exists an eigenvector in the cone $\mathcal{C}$ with positive real eigenvalue $\lambda$; by strong positivity this eigenvector must belong to $\Int \mathcal{C}$.   In the terminology of \cite[\S2.1.1]{Kr64} the strong positivity of the operator $\mathcal{L}$ implies that $\mathcal{L}$ is $u_0$-positive for every $u_0 \in \Int \mathcal{C}$, so \cite[Theorem 2.10]{Kr64} may be applied to show that the eigenvalue $\lambda$ is simple and \cite[Theorem 2.13]{Kr64} shows that it is maximal in absolute value (hence equal to $\rho(\mathcal{L})$) and that no other eigenvalues of the same absolute value exist.
\end{proof}


\section{Proof of Theorem \ref{th:opter}}\label{se:operators}

We will follow \cite{BoMo18} in analysing the singular value function
\[\varphi^s(A_\iii)=\left\|A_\iii^{\wedge\lfloor s\rfloor}\right\|^{1+\lfloor s\rfloor-s} \left\|A_\iii^{\wedge\lceil s\rceil}\right\|^{s-\lfloor s\rfloor}\]
 by treating it as a product of the form $\prod_{j=1}^\ell \|A_\iii^{(j)}\|^{t_j}$ where $(A_1^{(1)},\ldots,A_N^{(1)})$, \ldots, $(A_1^{(\ell)},\ldots,A_N^{(\ell)})$ are \emph{a priori} unrelated tuples of matrices with respective dimensions $d_1,\ldots,d_\ell\geq 1$, essentially ignoring the fact that the two tuples $(A_1^{\wedge \lfloor s\rfloor},\ldots,A_N^{\wedge \lfloor s\rfloor})$ and $(A_1^{\wedge \lceil s\rceil},\ldots,A_N^{\wedge \lceil s\rceil})$ are related by the property of being exterior powers of the same tuple. Besides the established utility of this approach in \cite{BoMo18,Mo18}, we suspect that other results of a similar character such as \cite{FeSh14,GuLe04} could in principle be rewritten in these terms.

Theorem \ref{th:opter} is a special case of the following more general statement which will also be applied in \cite{Mo20}: 
\begin{theorem}\label{th:topaff}
Let $\ell \geq 1$ and $t=(t_1,\ldots,t_\ell) \in \mathbb{C}^\ell$, for $j=1,\ldots,\ell$ let $m_j,d_j \geq 1$, let $(A_1^{(j)},\ldots,A_N^{(j)}) \in M_{d_j}(\mathbb{R})$, and let $(\mathcal{K}_1^{(j)},\ldots,\mathcal{K}_{m_j}^{(j)})$ be a multicone with transverse-defining vector $w_j \in \mathbb{R}^{d_j}$. Suppose that not every $d_j$ is equal to $1$. Then there exists a bounded open subset $\Omega$ of the $\sum_{j=1}^{\ell} (d_j-1)$-dimensional affine space
\begin{equation}\label{eq:bun}\left\{z =\bigoplus_{j=1}^\ell z_j \in \bigoplus_{j=1}^\ell \mathbb{C}^{d_j} \colon \langle z_j,w_j\rangle=1\text{ for all }j=1,\ldots,\ell\right\}\end{equation}
such that the operator
\[\left(\mathscr{L}_{t}f\right)\left(\bigoplus_{j=1}^\ell z_j\right):=\sum_{i=1}^N \prod_{j=1}^\ell \left(\frac{\langle A_i^{(j)}  z_j,w_j\rangle}{\sign \Re(\langle A_i^{(j)}  z_j,w_j\rangle)}\right)^{t_j}  f\left(\bigoplus_{j=1}^\ell \langle A_i^{(j)}z_j,w_j\rangle^{-1}A_i^{(j)}z_j \right)\]
is a well-defined bounded linear operator on $\mathcal{A}^2(\Omega)$ and:
\begin{enumerate}[(i)]
\item
There exist constants $C,\kappa,\gamma>0$ such that the approximation numbers $\mathfrak{s}_n(\mathscr{L}_t)$ satisfy $\mathfrak{s}_n(\mathscr{L}_t) \leq C\exp (\kappa\|t\|-\gamma n^{1/(d_1+\cdots +d_\ell-\ell)})$ for every $n \geq 1$ and $t=(t_1,\ldots,t_\ell) \in \mathbb{C}^\ell$.
\item
For each $n \geq 1$ the trace of the operator $\mathscr{L}_t^n$ is equal to
\begin{equation}\label{eq:trarse}\sum_{|\iii|=n} \prod_{j=1}^\ell  \frac{\lambda_1\left(A_\iii^{(j)}\right)^{d_j-1} \rho\left(A_\iii^{(j)}\right)^{t_j}} {p_{A_\iii^{(j)}}'\left(\lambda_1\left(A_\iii^{(j)}\right)\right)}\end{equation}
where $p_B(x):=\det(xI-B)$ denotes the characteristic polynomial of $B \in M_d(\mathbb{R})$ and $p'_B(x_0)$ its derivative evaluated at $x_0$.
\item
If $t \in \mathbb{R}^\ell$ then  
\begin{equation}\label{eq:limit}\rho(\mathscr{L}_t)=\lim_{n \to \infty} \left(\sum_{|\iii|=n} \prod_{j=1}^m \left\|A_\iii^{(j)}\right\|^{t_j}\right)^{\frac{1}{n}}\end{equation}
and in particular this limit exists. Furthermore in this case $\rho(\mathscr{L}_t)$ is a simple eigenvalue of $\mathscr{L}_t$, and $\mathscr{L}_t$ has no other eigenvalues with modulus equal to $\rho(\mathscr{L}_t)$. 
\end{enumerate}
\end{theorem}
\begin{proof}[Proof of Theorem \ref{th:topaff}]
Fix $i \in \{1,\ldots,\ell\}$. Since $(A_1^{(i)},\ldots,A_N^{(i)})$ strictly preserves the multicone $(\mathcal{K}_1^{(i)},\ldots,\mathcal{K}_{m_i}^{(i)})$ with transverse-defining vector $w_i$ we may choose a multicone $(\hat{\mathcal{K}}_1^{(i)},\ldots,\hat{\mathcal{K}}_1^{(i)})$ with the same transverse-defining vector such that $\hat{\mathcal{K}}_j^{(i)} \setminus \{0\}\subseteq \Int\mathcal{K}_j^{(i)}$ for each $j=1,\ldots,m_i$ and such that $A_k^{(i)}(\bigcup_{j=1}^{m_i} \mathcal{K}_j^{(i)}) \subseteq \bigcup_{j=1}^{m_i} (\hat{\mathcal{K}}_j^{(i)}\cup -\hat{\mathcal{K}}_j^{(i)})$ for every $k=1,\ldots,N$. Let
\[\mathfrak{A}_{(i)}:=\left\{A \in M_{d_i}(\mathbb{R}) \colon A\left(\bigcup_{j=1}^m \mathcal{K}_j^{(i)}\right) \subseteq \bigcup_{j=1}^{m} \left(\hat{\mathcal{K}}_j^{(i)} \cup -\hat{\mathcal{K}}_j^{(i)}\right)\right\}\]
and let $\mathfrak{A}_{(i)}^*$ denote the set of all nonzero elements of $\mathfrak{A}_{(i)}$. By Theorem \ref{th:multicones} $\mathfrak{A}_{(i)}^*$ is a semigroup. Since obviously $A_1^{(i)},\ldots,A_N^{(i)} \in \mathfrak{A}_{(i)}^*$ we have $A_\iii^{(i)} \in \mathfrak{A}_{(i)}^*$ for every $\iii \in \Sigma_N^*$. 

Theorem \ref{th:multicones} implies that there exists a bounded open set $\Omega_i \subset \{z_i \in \mathbb{C}^{d_i} \colon \langle z_i,w_i\rangle=1\}$ such that for every $\iii \in \Sigma_N^*$ the map $\phi_\iii^{(i)} \colon \Omega_i \to \Omega_i$ defined by $\phi_\iii^{(i)}(z):=\langle A_\iii^{(i)}z_i,w_i\rangle^{-1}A_\iii^{(i)}z_i$ is well-defined. For each $\iii \in \Sigma_N^*$ we have $\Re(\langle A_\iii^{(i)}z,w_i\rangle)\neq 0$ for all $z_i \in \Omega_i$ by Theorem \ref{th:multicones}(\ref{it:relower}), so for each $\iii \in \Sigma_N^*$ the function $z_i \mapsto \sign \Re(\langle A_\iii^{(i)}z_i,w_i\rangle) \in \{\pm1\}$ is well-defined and is constant on every connected component of $\Omega_i$. In particular it is a holomorphic function on $\Omega_i$. For each $\iii \in \Sigma_N^*$ define
\[\psi_{\iii,t}^{(i)}(z_i):=\left(\frac{\langle A_\iii^{(i)}z_i,w_i\rangle}{\sign \Re(\langle A_\iii^{(i)}z_i,w_i\rangle)}\right)^{t_i}:= \exp\left(t_i \log \left(\frac{\langle A_\iii^{(i)}z_i,w_i\rangle}{\sign \Re(\langle A_\iii^{(i)}z_i,w_i\rangle)}\right)\right).\]
Since $\langle A_\iii^{(i)}z_i,w_i\rangle/\sign \Re(\langle A_\iii^{(i)}z_i,w_i\rangle)$ has positive real part for all $z_i \in \Omega_i$ its logarithm is a well-defined holomorphic function of $z_i \in \Omega_i$ and has imaginary part confined to the range $(-\frac{\pi}{2},\frac{\pi}{2})$ throughout $\Omega_i$. 

For all $z_i \in \Omega_i$ and $\iii \in \Sigma_N^*$ we have
\begin{equation}\label{eq:herpderp} \left|\Re\left( \log\left(\frac{\langle A_\iii^{(i)}z_i,w_i\rangle}{\sign \Re(\langle A_\iii^{(i)}z_i,w_i\rangle)}\right)\right)-\log \left\| A_\iii^{(i)}\right\|\right|\leq \log C_1\end{equation}
for some constant $C_1>1$ using Theorem \ref{th:multicones}(\ref{it:relower}), where $C_1$ may be chosen independent of $i \in \{1,\ldots,m\}$ by taking the maximum of its possible values as $i$ varies. Hence
 \begin{align*}
\Re\left( t_i\log\left(\frac{\langle A_\iii^{(i)}z_i,w_i\rangle}{\sign \Re(\langle A_\iii^{(i)}z_i,w_i\rangle)}\right)\right)=&\,\,\Re(t_i)\Re\left( \log\left(\frac{\langle A_\iii^{(i)}z_i,w_i\rangle}{\sign \Re(\langle A_\iii^{(i)}z_i,w_i\rangle)}\right)\right)\\
& - \Im(t_i)\Im\left( \log\left(\frac{\langle A_\iii^{(i)}z_i,w_i\rangle}{\sign \Re(\langle A_\iii^{(i)}z_i,w_i\rangle)}\right)\right)\\
\leq&\,\,\Re(t_i) \log\left\|A_\iii^{(i)}\right\| +|\Re(t_i)|\log C_1 + \frac{\pi}{2}\left|\Im(t_i)\right|\end{align*}
for all $z_i \in \Omega_i$ and $\iii \in \Sigma_N^*$ and therefore
\[\sup_{z_i \in \Omega_i} \left|\psi_{\iii,t}^{(i)}(z_i)\right| \leq \left(C_1e^{\pi/2}\right)^{|t_i|}\left\|A_\iii^{(i)}\right\|^{\Re(t_i)}\]
for all $\iii \in \Sigma_N^*$. 

Now define $\Omega:=\Omega_1 \times \cdots \times \Omega_\ell$. We observe that each $\Omega_i$ is a bounded, open subset of the hyperplane $\left\{z_i \in \mathbb{C}^{d_i} \colon \langle z_i,w_i\rangle=1\right\}$ which is symmetric with respect to complex conjugation and therefore $\Omega$ also has these properties as a subset of the affine space defined in \eqref{eq:bun}. For each $\iii \in \Sigma_N^*$ define a holomorphic function $\phi_\iii \colon \Omega \to \Omega$ by
\[\phi_\iii\left(z_1,\ldots,z_\ell\right):=\left(\phi_\iii^{(1)}(z_1),\ldots,\phi_\iii^{(\ell)}(z_\ell)\right)\]
in accordance with the statement of the theorem. As a consequence of Theorem \ref{th:multicones}(\ref{it:induce}) the set
\[  \overline{\bigcup_{\iii \in \Sigma_N^*}\phi_\iii(\Omega)} =  \overline{\bigcup_{\iii \in \Sigma_N^*} \phi_\iii^{(1)}(\Omega_1) \times \cdots \times \phi_\iii^{(\ell)}(\Omega_\ell) } \]
is a compact subset of $\Omega$. For each $\iii \in \Sigma_N^*$ define also $\psi_{\iii,t} \colon \Omega \to \mathbb{C}$ by
\[\psi_{\iii,t}(z_1,\ldots,z_\ell):=\prod_{i=1}^\ell \psi_{\iii,t}^{(i)}(z_i)\]
and observe that
\begin{equation}\label{eq:strongpsibound}\sup_{z \in \Omega} \left|\psi_{\iii,t}(z)\right| \leq C_2^{\|t\|}\prod_{i=1}^\ell \left\|A_\iii^{(i)}\right\|^{\Re(t_i)}\end{equation}
for all $\iii \in \Sigma_N^*$ and $t \in \mathbb{C}^\ell$, where $C_2:=(C_1e^{\pi/2})^{\sqrt{\ell}}$. In particular
\begin{equation}\label{eq:weakpsibound}
\sum_{j=1}^N \sup_{z \in \Omega} \left|\psi_{j,t}(z)\right| \leq C_2^{\|t\|}\sum_{j=1}^N \prod_{i=1}^\ell \left\|A_j^{(i)}\right\|^{\Re(t_i)} \leq NC_3^{\|t\|},
\end{equation}
say, for every $t \in \mathbb{C}^\ell$.

We may now define the operator $\mathscr{L}_t$ by
\[\left(\mathscr{L}_tf\right)(z):=\sum_{j=1}^N \psi_{j,t}(z)f(\phi_j(z))\]
for all $f \in \mathcal{A}^2(\Omega)$ and $z \in \Omega$ in accordance with the statement of the theorem. By Theorem \ref{th:banjen} it follows that each $\mathscr{L}_t$ is a well-defined bounded linear operator acting on $\mathcal{A}^2(\Omega)$ and that there exist $C_4,\gamma>0$  such that for all $t \in \mathbb{C}^\ell$ we have
\begin{align*}\mathfrak{s}_n\left(\mathscr{L}_t\right) &\leq C_4\left(\sum_{j=1}^N\sup_{z \in \Omega}\left|\psi_{j,t}(z)\right| \right)\exp\left(-\gamma n^{1/(\sum_{i=1}^\ell (d_i-1))}\right)\\
&  \leq C_4N\exp\left(\kappa\|t\|-\gamma n^{1/\sum_{i=1}^\ell (d_i-1))}\right)\end{align*}
as a consequence of \eqref{eq:weakpsibound}, where $\kappa:=\log C_3$. We have proved (i).

It follows from (i) that $\mathscr{L}_t$ is a trace-class operator. For each $\iii \in\Sigma_N^*$ and $t \in \mathbb{C}^\ell$ let us define an auxiliary operator $\mathscr{L}_{\iii,t}$ by
\[\left(\mathscr{L}_{\iii,t}f\right)(z):= \psi_{\iii,t}(z)f(\phi_\iii(z)).\]
Theorem \ref{th:banjen} shows in the same manner as before that each $\mathscr{L}_{\iii,t}$ is a well-defined trace-class operator on $\mathcal{A}^2(\Omega)$. The reader may easily verify the equations
\[\psi_{\jjj \iii,t}(z)=\psi_{\iii,t}(\phi_\jjj(z)) \psi_{\jjj,t}(z),\qquad \phi_{\jjj\iii}(z)=\phi_\iii(\phi_\jjj(z))\]
and therefore $\mathscr{L}_{\jjj\iii,t}=\mathscr{L}_{\iii,t}\mathscr{L}_{\jjj,t}$ for all $\iii,\jjj \in \Sigma_N^*$ and $t \in \mathbb{C}^\ell$. It follows by a simple inductive argument that $\mathscr{L}_t^n=\sum_{|\iii|=n}\mathscr{L}_{\iii,t}$ for every $n\geq 1$ and $t \in \mathbb{C}^\ell$, so in particular
\begin{equation}\label{eq:trace-is-linear}\tr \mathscr{L}_t = \tr \sum_{|\iii|=n}\mathscr{L}_{\iii,t}= \sum_{|\iii|=n}\tr \mathscr{L}_{\iii,t}\end{equation}
 for every $n\geq 1$ and $t \in \mathbb{C}^\ell$ by the linearity of the trace.
 
Let us now compute $\tr \mathscr{L}_{\iii,t}$ for fixed $\iii \in \Sigma_N^*$ and $t \in \mathbb{C}^\ell$. By Theorem \ref{th:multicones}(\ref{it:det}) each $\phi_\iii^{(i)}$ has a unique fixed point $z_\iii^{(i)} \in \Omega_i$ and it follows directly that $z_\iii:=(z_\iii^{(1)},\ldots,z_\iii^{(\ell)}) \in \Omega$ is the unique fixed point of $\phi_\iii$ in $\Omega$. Since $\langle A_\iii^{(i)}z_\iii^{(i)},w_i\rangle=\lambda_1(A_\iii^{(i)})$ for each $i=1,\ldots,\ell$ it follows easily that $\psi_{\iii,t}(z_\iii)=\prod_{i=1}^\ell \psi_{\iii,t}^{(i)}(z_\iii^{(i)})=\prod_{i=1}^\ell \rho(A_\iii^{(i)})^{t_i}$. By Theorem \ref{th:multicones}(\ref{it:det}) the derivative $D_{z_\iii^{(i)}}\phi_\iii^{(i)}$ of $\phi_\iii^{(i)}$ at $z_\iii^{(i)}$ satisfies
\[\det\left(I-D_{z_\iii^{(i)}}\phi_\iii^{(i)}\right) = \frac{p'_{A_\iii^{(i)}}\left(\lambda_1\left(A_\iii^{(i)}\right)\right)}{\lambda_1\left(A_\iii^{(i)}\right)^{d_i-1}}\]
where both sides of this expression are interpreted as $1$ if $d_i=1$, and since clearly $D_{z_\iii}\phi_\iii = D_{z_\iii^{(1)}}\phi_{\iii}^{(1)} \oplus \cdots \oplus  D_{z_\iii^{(\ell)}}\phi_{\iii}^{(\ell)}$ we easily obtain
\[\det \left(I-D_{z_\iii}\phi_\iii\right) = \prod_{i=1}^\ell \det\left(I-D_{z_\iii^{(i)}}\phi_\iii^{(i)}\right)
=\prod_{i=1}^\ell \frac{p'_{A_\iii^{(i)}}\left(\lambda_1\left(A_\iii^{(i)}\right)\right)}{\lambda_1\left(A_\iii^{(i)}\right)^{d_i-1}}.\]
It follows by Theorem \ref{t:thrace} that
\begin{equation}\label{eq:trace-individual}\tr \mathscr{L}_{\iii,t} = \prod_{i=1}^\ell \frac{\lambda_1\left(A_\iii^{(i)}\right)^{d_i-1} \rho\left(A_\iii^{(i)}\right)^{t_i}}{p_{A_\iii^{(i)}}'\left(\lambda_1\left(A_\iii^{(i)}\right)\right)}\end{equation}
 for every $t =(t_1,\ldots,t_\ell)\in \mathbb{C}^\ell$ and every $\iii \in \Sigma_N^*$, and combining \eqref{eq:trace-is-linear} with \eqref{eq:trace-individual} yields \eqref{eq:trarse} which completes the proof of (ii).
 
The proof of (iii) requires some preparatory steps. For the remainder of the proof we fix $t=(t_1,\ldots,t_\ell) \in \mathbb{R}^\ell$. We begin with the existence of the limit in \eqref{eq:limit}. By Theorem \ref{th:multicones}(\ref{it:aly}) there exists $\tau\in (0,1]$ such that $\tau  \|A_\iii ^{(i)}\|\cdot\|A_\jjj^{(i)}\| \leq \|A_\iii ^{(i)}A_\jjj^{(i)}\| \leq   \|A_\iii ^{(i)}\|\cdot\|A_\jjj^{(i)}\|$ for all $i=1,\ldots,\ell$ and $\iii,\jjj \in \Sigma_N^*$, which clearly implies
\[ \left\|A_{\jjj\iii}^{(i)} \right\|^{t_i}  \geq \tau^{|t_i|}\left\|A_\iii^{(i)} \right\|^{t_i}\left\|A_\jjj^{(i)} \right\|^{t_i}  \]
for all $i$, $\iii$ and $\jjj$. The inequality
\[\sum_{|\iii|=n+m} \prod_{i=1}^\ell \left\|A_\iii^{(i)}\right\|^{t_i}\geq\tau^{\sum_{i=1}^\ell |t_i|}\left(\sum_{|\iii|=n} \prod_{i=1}^\ell \left\|A_\iii^{(i)}\right\|^{t_i}\right)\left(\sum_{|\iii|=m} \prod_{i=1}^\ell \left\|A_\iii^{(i)}\right\|^{t_i}\right)  \]
for all $n,m \geq 1$ follows, so by superadditivity the limit
\[e^{\mathscr{P}(t)}:= \lim_{n \to \infty} \left(\sum_{|\iii|=n}  \prod_{i=1}^\ell \left\|A_\iii^{(i)}\right\|^{t_i}\right)^{\frac{1}{n}} = \sup_{n \geq 1}\left(\tau^{\sum_{i=1}^\ell |t_i|}\sum_{|\iii|=n}  \prod_{i=1}^\ell \left\|A_\iii^{(i)}\right\|^{t_i}\right)^{\frac{1}{n}} \]
is well-defined. We obtain in particular the inequality 
\begin{equation}\label{eq:pbup}
\sum_{|\iii|=n}  \prod_{i=1}^\ell \left\|A_\iii^{(i)}\right\|^{t_i}\leq \tau^{-\sum_{i=1}^\ell |t_i|} e^{n\mathscr{P}(t)} \leq C_5^{\|t\|}e^{n\mathscr{P}(t)},
\end{equation}
say, for all $n \geq 1$.

We next introduce a subset of $\Omega$ which will be useful in describing the behaviour of the eigenfunctions of $\mathscr{L}_t$. By Theorem \ref{th:multicones}(\ref{it:contract}) there exist for each $i=1,\ldots,\ell$ a metric $\mathsf{d}_i$ on $\Omega$ which is equivalent to the standard metric and a real number $\theta_i \in (0,1)$ such that every $\phi_\iii^{(i)}$ is a $\theta_i$-contraction with respect to $\mathsf{d}_i$. Clearly if $\mathsf{d}$ is the product metric derived from $\mathsf{d}_1,\ldots,\mathsf{d}_\ell$ and $\theta:=\max_i \theta_i$ then every $\phi_\iii$ is a $\theta$-contraction on $\Omega$ with respect to $\mathsf{d}$ and $\mathsf{d}$ is equivalent to the standard metric on $\Omega$. It follows that $\phi_1,\ldots,\phi_N$ defines an iterated function system on the compact set $\Omega':=\bigcup_{\iii \in \Sigma_N^*} \overline{\phi_\iii(\Omega)}$ in the sense of J.E. Hutchinson \cite{Hu81} and therefore there exists a unique nonempty compact set $\Lambda \subseteq \Omega'$ with the property $\Lambda=\bigcup_{j=1}^N \phi_j(\Lambda)$. Clearly $\Lambda=\bigcup_{|\iii|=n}\phi_\iii(\Lambda)$ for every $n \geq 1$ by a straightforward induction and it follows easily by contractivity that $\Lambda=\bigcap_{n=1} \bigcup_{|\iii|=n}\overline{\phi_\iii(\Omega)}$. On the other hand $\phi_1,\ldots,\phi_N$ clearly also defines an iterated function system on $\Omega' \cap \bigoplus_{i=1}^\ell \mathbb{R}^{d_i}$ since each map $z_i \mapsto \langle A_\iii^{(i)}z,w_i\rangle^{-1}A_\iii^{(i)}z_i$ obviously preserves $\Omega_i \cap \mathbb{R}^{d_i}$. There therefore exists a unique nonempty compact set $\Lambda' \subseteq \Omega'  \cap \bigoplus_{i=1}^\ell \mathbb{R}^{d_i}$ with the same property $\Lambda'=\bigcup_{j=1}^N \phi_j(\Lambda')$. By uniqueness we have $\Lambda=\Lambda'$ and we deduce that $\Lambda \subseteq \Omega \cap \bigoplus_{i=1}^\ell \mathbb{R}^{d_i}$.

We claim that $\Lambda$ has the following transitivity property: for every open set $U\subset \Omega$ having nonempty intersection with $\Lambda$ there exists $\jjj \in \Sigma_N^*$ such that $\phi_\jjj(\Omega) \subseteq U$. To demonstrate this choose $\omega \in U \cap \Lambda$ arbitrarily, let $\varepsilon>0$ be small enough that the ball of centre $\omega$ and radius $\varepsilon$ in the metric $\mathsf{d}$ is a subset of $U$, and let $n$ be large enough that $\theta^n\diam \Omega<\varepsilon$ in the sense of the metric $\mathsf{d}$. Since $\omega \in \Lambda=\bigcup_{|\iii|=n}\phi_\iii(\Lambda)$ there exists $\jjj \in \Sigma_N^*$ with length $n$ such that $\omega \in \phi_\jjj(\Lambda)$. Clearly $\omega \in \phi_\jjj(\Omega)$ and every other point of $\phi_\jjj(\Omega)$ is within distance $\theta^n\diam \Omega <\varepsilon$ of $\omega$, so $\phi_\jjj(\Omega)$ is contained in the $\varepsilon$-ball around $\omega$ and is therefore a subset of $U$ as required. The claim is proved.
 
We make one final preliminary claim: there exists $C_6>1$ such that for every $f \in \mathcal{A}^2(\Omega)$,
\begin{equation}\label{eq:soup}\limsup_{n\to \infty} \sup_{z \in \Omega} e^{-n\mathscr{P}(t)}\left|(\mathscr{L}_t^nf)(z)\right| \leq C_6^{\|t\|} \sup_{z \in \Lambda}|f(z)|.\end{equation}
To prove the claim let $z_0 \in \Omega$ and $n \geq 1$ be arbitrary: we have
\begin{align*}e^{-n\mathscr{P}(t)}\left|\left(\mathscr{L}_t^nf\right)(z_0)\right|&=e^{-n\mathscr{P}(t)}\left|\sum_{|\iii|=n}\psi_{\iii,t}(z_0)f\left(\phi_\iii(z_0)\right)\right|\\
&\leq C_2^{\|t\|}e^{-n\mathscr{P}(t)}\sum_{|\iii|=n} \left(\prod_{i=1}^\ell \left\|A_\iii^{(i)}\right\|^{t_i}\right) \left|f\left(\phi_\iii(z_0)\right)\right|\\
&\leq C_2^{\|t\|}C_5^{\|t\|}\sup_{z \in \overline{\bigcup_{|\iii|=n} \phi_\iii(\Omega)}} |f(z)|\end{align*}
using \eqref{eq:strongpsibound} and \eqref{eq:pbup} and the result follows easily since $\Lambda=\bigcap_{n=1} \bigcup_{|\iii|=n}\overline{\phi_\iii(\Omega)}$. We observe immediately that $\rho(\mathscr{L}_t)\leq e^{\mathscr{P}(t)}$ since obviously \eqref{eq:soup} prevents $\mathscr{L}_t$ from having an eigenfunction which corresponds to an eigenvalue of modulus strictly greater than $e^{\mathscr{P}(t)}$.
We also observe that as a consequence of \eqref{eq:soup} an eigenfunction of $\mathscr{L}_t$ with eigenvalue of modulus $e^{\mathscr{P}(t)}$ cannot vanish identically on $\Lambda$.

In order to apply Theorem \ref{th:kr} we wish to study the action of $\mathscr{L}_t$ on a real Hilbert space. Let us define
\[\mathcal{H}:=\left\{f \in \mathcal{A}^2(\Omega) \colon f(z)\in \mathbb{R} \text{ for all }z \in \Omega \cap \bigoplus_{i=1}^\ell \mathbb{R}^{d_i}\right\}\]
 and note that $\mathcal{H}$ is a closed subset of $\mathcal{A}^2(\Omega)$ as a consequence of Lemma \ref{le:supbound}. It is clear that $\mathcal{H}$ is also a real Hilbert space when equipped with the norm $\|\cdot\|_{\mathcal{A}^2(\Omega)}$. We observe that the complexification $\mathcal{H}^{\mathbb{C}}$ is precisely $\mathcal{A}^2(\Omega)$. Indeed, since $z \in  \Omega$ if and only if $z^* \in \Omega$ by Theorem \ref{th:multicones}, for every $f \in \mathcal{A}^2(\Omega)$ the holomorphic function $f^*$ defined by $f^*(z):=f(z^*)^*$ is also an element of $\mathcal{A}^2(\Omega)$; thus every $f \in \mathcal{A}^2(\Omega)$ can be written as $f = \frac{1}{2}(f + f^*) + \frac{1}{2}(f-f^*) =g+ih$, say, where $f,g \in \mathcal{H}$. This decomposition is moreover unique since if $g+ih$ is the zero function with $g,h \in \mathcal{H}$ then $g$ and $h$ are identically zero on $\Omega \cap \bigoplus_{i=1}^\ell \mathbb{R}^{d_i}$, hence all of their derivatives vanish there, hence they are zero on every connected component of $\Omega$ which intersects $ \bigoplus_{i=1}^\ell \mathbb{R}^{d_i}$, hence they are zero throughout $\Omega$ by Theorem \ref{th:multicones}(\ref{it:omega}). 
  
We wish to apply Theorem \ref{th:kr} in order to study the spectrum of $\mathscr{L}_t$ on $\mathcal{H}$ and hence on its complexification $\mathcal{A}^2(\Omega)$. The natural mechanism for doing this is to consider the cone of elements of $\mathcal{H}$ which are non-negative on a convenient compact subset such as $\bigcup_{j=1}^N \overline{\phi_j(\Omega)} \cap \bigoplus_{i=1}^\ell \mathbb{R}^{d_i}$ and show that every nonzero element of the cone is eventually mapped to an interior point (which is precisely a function which is positive throughout the compact subset) by some power of $\mathscr{L}_t$. However, in the full generality of Theorem \ref{th:main} it is possible that $\phi_\iii(\Omega)$ may be extremely small, indeed even a singleton set. In such cases it is not necessarily the case that every nonzero holomorphic function on $\Omega$ is eventually mapped to a function which is positive on a prescribed set and Theorem \ref{th:kr} may not be directly applicable. To resolve this issue we will pass to a suitable quotient Hilbert space.

It is clear from the definition of $\psi_{j,t}$ and $\phi_j$ that $(\mathscr{L}_tf)(z)$ is real  when $f \in \mathcal{H}$ and $z \in \Omega \cap \bigoplus_{i=1}^\ell \mathbb{R}^{d_i}$, so $\mathscr{L}_t$ acts on $\mathcal{H}$. Define $\mathcal{Z}:=\{f \in \mathcal{H} \colon f(z)=0\text{ for all }z \in \Lambda\}$ and note that $\mathcal{Z}$ is a vector subspace of $\mathcal{H}$ and is closed as a consequence of Lemma \ref{le:supbound}. We observe that by similar reasoning $\mathscr{L}_t$ preserves the subspace $\mathcal{Z}$. The quotient space $\mathcal{H}/\mathcal{Z}$ is a Hilbert space when equipped with norm $\|[f]\|_{\mathcal{H}/\mathcal{Z}}:=\inf\{\|f-g\|_{\mathcal{A}^2(\Omega)}\colon g \in \mathcal{Z}\}$, being isometrically isomorphic to the orthogonal complement of $\mathcal{Z}$ in $\mathcal{H}$. It is not difficult to see that the operator $\mathscr{L}_t$ induces a compact operator on the real Hilbert space $\mathcal{H}/\mathcal{Z}$ which we also denote by $\mathscr{L}_t$.

We observe that for each $z \in \Lambda$ the functional $[f]\mapsto f(z)$ is a well-defined continuous linear functional $\mathcal{H}/\mathcal{Z} \to \mathbb{R}$. Indeed, if $[f] \in \mathcal{H}/\mathcal{Z}$ and $g \in \mathcal{Z}$ then we have $f(z)=(f+g)(z)$ and
\[|f(z)|=|(f+g)(z)| \leq C_\Lambda\|f+g\|_{\mathcal{A}^2(\Omega)}\]
where $C_\Lambda>0$ is the constant given by Lemma \ref{le:supbound} in respect of the nonempty compact set $\Lambda$. In particular $f(z)$ is independent of the choice of representative $f \in [f]$ and
\begin{equation}\label{eq:mog-can-do-one}|f(z)|\leq C_\Lambda\inf\{\|f+g\|_{\mathcal{A}^2(\Omega)} \colon g \in \mathcal{Z}\}=C_\Lambda\|[f]\|_{\mathcal{H}/\mathcal{Z}}\end{equation}
so that the functional $[f] \mapsto f(z)$ is continuous as claimed. Now define
\[\mathcal{C}:=\{[f] \in \mathcal{H}/\mathcal{Z} \colon f(z)\geq 0\text{ for all }z \in \Lambda\} = \bigcap_{z \in \Lambda}\left\{[f]\in \mathcal{H}/\mathcal{Z} \colon f(z) \geq 0\right\}.\]
This set is clearly well-defined, positively homogenous, convex, and closed. If $[f] \in \mathcal{C} \cap -\mathcal{C}$ then $f(z)=0$ for all $z \in \Lambda$ so that $f \in \mathcal{Z}$ and therefore the only element of $\mathcal{C}\cap-\mathcal{C}$ is $[0]$. Since the function $[f]\mapsto \inf_{z \in \Lambda} f(z)$ is continuous as a consequence of \eqref{eq:mog-can-do-one} it is not difficult to see that $[f] \in \mathcal{H}/\mathcal{Z}$ is an interior point of $\mathcal{C}$ if and only if $\inf_{z \in \Lambda} f(z)>0$. In particular the set $\mathcal{C}$ satisfies conditions (i)--(iii) of Theorem \ref{th:kr}. We observe also that $\mathscr{L}_t\mathcal{C}\subseteq \mathcal{C}$ since by construction each $\psi_{j,t}$ is positive on $\Omega \cap \bigoplus_{i=1}^\ell \mathbb{R}^{d_i}$ and in particular on $\Lambda$.

In order to be able to apply Theorem \ref{th:kr} we must show that for every $[f] \in \mathcal{C}$ with $[f]\neq [0]$ there exists $n\geq 1$ such that $\inf_{z \in \Lambda} (\mathcal{L}_t^nf)(z)>0$. Given $[f] \in \mathcal{C}$ with $[f]\neq [0]$ there necessarily exists $z_0 \in \Lambda$ such that $f(z_0)>0$ and hence there exists an open set $U \subset \Omega$ intersecting $\Lambda$ such that $f(z)>0$ for all $z \in U\cap \Lambda$. By the transitivity property of $\Lambda$ remarked earlier there exists a word $\jjj\in \Sigma_N^*$ with some length $n$ such that $\overline{\phi_{\jjj}(\Omega)} \subset U$, so in particular $\overline{\phi_{\jjj}(\Lambda)} \subset U \cap \Lambda$ and therefore $f(\phi_\jjj(z))>0$ for all $z \in \Lambda$. Hence
\[(\mathscr{L}_t^nf)(z) =\sum_{|\iii|=n}\psi_{\iii,t}(z)f(\phi_\iii(z)) \geq \psi_{\jjj,t}(z)f(\phi_\jjj(z))>0\]
for all $z \in \Lambda$ since each $\psi_{\iii,t}$ is real and positive throughout $\Lambda$, each $f\circ\phi_\iii$ is real and non-negative throughout $\Lambda$, and $f\circ\phi_\jjj$ is real and positive throughout $\Lambda$. We have obtained $\inf_{z \in \Lambda} (\mathcal{L}_t^nf)(z)>0$ and therefore $[\mathcal{L}_t^nf] \in \mathcal{C}$ as required. 

We may now apply Theorem \ref{th:kr} to the action of $\mathscr{L}_t$ on $\mathcal{H}/\mathcal{Z}$. By that theorem the spectral radius $R$ of $\mathscr{L}_t$ on $\mathcal{H}/\mathcal{Z}$ is positive and there exists $[\xi_t] \in \Int \mathcal{C}$ such that $[\mathscr{L}_t\xi_t]=R[\xi_t]$. Now let $z_0 \in \Lambda$ be arbitrary. It follows from \eqref{eq:herpderp} that
\[C_1^{-\|t\|}\prod_{i=1}^\ell \left\|A_\iii^{(i)}\right\|^{t_i} \leq \psi_{\iii,t}(z_0) \leq C_1^{\|t\|}\prod_{i=1}^\ell \left\|A_\iii^{(i)}\right\|^{t_i}\]
for every $\iii \in \Sigma_N^*$, so
\[\left(\mathscr{L}_t^n\xi_t\right)(z_0) = \sum_{|\iii|=n}\psi_{\iii,t}(z_0) \xi_t(\phi_\iii(z_0)) \geq C_1^{-\|t\|}\sum_{|\iii|=n}\prod_{i=1}^\ell \left\|A_\iii^{(i)}\right\|^{t_i} \left(\inf_{z \in \Lambda}\xi_t(z)\right)\]
and in a similar fashion
\[\left(\mathscr{L}_t^n\xi_t\right)(z_0) \leq C_1^{\|t\|}\leq \sum_{|\iii|=n}\prod_{i=1}^\ell \left\|A_\iii^{(i)}\right\|^{t_i} \left(\sup_{z \in \Lambda}\xi_t(z)\right).\]
Since $[\mathscr{L}_t^n\xi]=R^n[\xi_t]$ and the function $[f]\mapsto f(z_0)$ is continuous, the left-hand side of each of the last two displayed equations is simply $R^n\xi_t(z_0)>0$. Taking the power $1/n$ and letting $n \to \infty$ it follows that $R = e^{\mathscr{P}(t)}$, and we previously observed that $e^{\mathscr{P}(t)}\geq \rho(\mathscr{L}_t)$. On the other hand it is clear that necessarily $\rho(\mathscr{L}_t) \geq  \lim_{n \to \infty} |(\mathscr{L}_t^n\xi_t)(z_0)|^{1/n}=R$ and we conclude that  $R = e^{\mathscr{P}(t)}= \rho(\mathscr{L}_t)$.

If an eigenvalue of $\mathscr{L}_t$ acting on $\mathcal{A}^2(\Omega)$ has absolute value $\rho(\mathscr{L}_t)$ then by \eqref{eq:soup} its corresponding eigenfunction $\eta_t$ cannot be identically zero on $\Lambda$. Consequently $[\eta_t] \neq [0]$ and therefore $[\eta_t]$ is an eigenfunction of $\mathscr{L}_t$ on $\mathcal{H}/\mathcal{Z}$ (or its complexification) with the same eigenvalue. But by Theorem \ref{th:kr} this is only possible if the eigenvalue is $\rho(\mathscr{L}_t)$ itself, and we conclude that $\rho(\mathscr{L}_t)$ is the only eigenvalue of $\mathscr{L}_t$ on $\mathcal{A}^2(\Omega)$ which has maximum modulus. Moreover this eigenvalue is simple: if two linearly independent eigenfunctions $\xi^1_t, \xi^2_t \in \mathcal{A}^2(\Omega)$ exist then by \eqref{eq:soup} neither function can be identically zero on $\Lambda$; by multiplying each by a complex unit if necessary, we may assume that each takes a nonzero real value somewhere on $\Lambda$; and replacing $\xi_t^1$ and $\xi_t^2$ with the functions $\xi_t^1+(\xi_t^1)^*$ and $\xi_t+(\xi_t^2)^*$ if necessary we may assume that $\xi_t^1,\xi_t^2 \in \mathcal{H}$ and $[\xi_t^1],[\xi_t^2]\neq 0$. Since $\mathscr{L}_t$ acting on $\mathcal{H}/\mathcal{Z}$ has a simple eigenvalue at $e^{\mathscr{P}(t)}$ by Theorem \ref{th:kr}, the equivalence classes $[\xi^1_t]$ and $[\xi^2_t]$ must be exact, nonzero scalar multiples of one another. This is precisely to say that some linear combination of $\xi_t^1$ and $\xi^2_t$ vanishes identically on $\Lambda$ but is not the zero element of $\mathcal{H}$; but since that linear combination is an eigenfunction with eigenvalue $e^{\mathscr{P}(t)}$ this contradicts \eqref{eq:soup}.

To complete the proof it remains only to show that $\rho(\mathscr{L}_t)$ is an algebraically simple eigenvalue. Let $\xi_t\in\mathcal{A}^2(\Omega)$ be an eigenfunction corresponding to this eigenvalue and observe that by \eqref{eq:soup} $\xi_t$ is not identically zero on $\Lambda$. If $e^{\mathscr{P}(t)}$ is not algebraically simple, there exists nonzero $\eta_t \in \mathcal{A}^2(\Omega)$ such that $\mathscr{L}_t\eta_t=e^{\mathscr{P}(t)}(\eta_t+\xi_t)$ and therefore $\mathscr{L}_t^n\eta_t=e^{n\mathscr{P}(t)}(\eta_t+n\xi_t)$ for every $n \geq 1$, but this is only compatible with \eqref{eq:soup} if $\xi_t$ is identically zero on $\Lambda$, a contradiction. The proof is complete.
\end{proof}


\section{Proof of Theorem \ref{th:main} }\label{se:proofs}

Before starting the proof of Theorem \ref{th:main} we require two preliminary lemmas, one concerning the behaviour of the leading eigenvalue of the operator $\mathscr{L}_s$ of Theorems \ref{th:opter} and \ref{th:topaff} and one an abstract result concerning sequences of implicit functions in two complex variables. 
\begin{lemma}\label{le:do-not-press}
Let $(A_1,\ldots,A_N)\in M_d(\mathbb{R})^N$ be $k$- and $(k+1)$-multipositive with $N,d \geq 2$ and $0 \leq k<d$, and for each $s \in \mathbb{C}$ let $\mathscr{L}_s \colon \mathscr{H} \to \mathscr{H}$ be as given by Theorem \ref{th:opter}. 
Define
\[p(s):=\log\rho(\mathscr{L}_s)=\lim_{n\to \infty}  \frac{1}{n}\log \left(\sum_{|\iii|=n}\left\|A_\iii^{\wedge k}\right\|^{k+1-s} \left\|A_\iii^{\wedge (k+1)}\right\|^{s-k}\right)\]
for all $s \in \mathbb{R}$. Then $p \colon \mathbb{R} \to \mathbb{R}$ is convex. If additionally there exists a norm $\vertle{\cdot}$ on $\mathbb{R}^d$ with respect to which $\max_{1 \leq i \leq N}\vertle{A_i}<1$, then there exists $c>0$ such that
\[\frac{p(s_2)-p(s_1)}{s_2-s_1}\leq -c<0\]
for all pairs of distinct points $s_1,s_2 \in \mathbb{R}$.
\end{lemma}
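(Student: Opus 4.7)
The plan is to prove convexity via log-convexity of the level-$n$ sums and then to use the contraction hypothesis to extract a uniform negative slope. For the first claim, observe that for each $\iii \in \Sigma_N^*$ the function
\begin{equation*}
s \mapsto \log\left(\|A_\iii^{\wedge k}\|^{k+1-s}\|A_\iii^{\wedge(k+1)}\|^{s-k}\right) = (k+1-s)\log\|A_\iii^{\wedge k}\| + (s-k)\log\|A_\iii^{\wedge(k+1)}\|
\end{equation*}
is affine, so each summand is a log-affine (hence log-convex) function of $s \in \mathbb{R}$. The $k$- and $(k+1)$-multipositivity hypotheses, together with Proposition \ref{pr:bob}(iii) applied to $\{A_i^{\wedge k}\}$ and $\{A_i^{\wedge(k+1)}\}$, ensure that $\|A_\iii^{\wedge k}\|$ and $\|A_\iii^{\wedge(k+1)}\|$ are strictly positive for every $\iii \in \Sigma_N^*$. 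The standard fact that a finite sum of positive log-convex functions is log-convex (a consequence of the log-sum-exp inequality) then shows that $s \mapsto \tfrac{1}{n}\log \sum_{|\iii|=n}\|A_\iii^{\wedge k}\|^{k+1-s}\|A_\iii^{\wedge(k+1)}\|^{s-k}$ is convex on $\mathbb{R}$ for each $n \geq 1$. Its pointwise limit $p(s)$ is therefore convex, as required.

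For the strict negative slope, the key observation is the elementary identity $\|A_\iii^{\wedge(k+1)}\|/\|A_\iii^{\wedge k}\| = \sigma_{k+1}(A_\iii) \leq \sigma_1(A_\iii) = \|A_\iii\|$, valid whenever $\|A_\iii^{\wedge k}\|>0$. By equivalence of norms on $M_d(\mathbb{R})$ there is a constant $C > 0$ with $\|A\| \leq C\vertle{A}$ for all $A$, and by submultiplicativity of $\vertle{\cdot}$ we have $\vertle{A_\iii} \leq r^n$ for every $\iii$ with $|\iii|=n$, where $r := \max_{1 \leq i \leq N}\vertle{A_i} < 1$. Combining these,
\begin{equation*}
\frac{\|A_\iii^{\wedge(k+1)}\|}{\|A_\iii^{\wedge k}\|} \leq Cr^n \qquad\text{for every } \iii \in \Sigma_N^* \text{ with } |\iii|=n.
\end{equation*}

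Now given $s_1 < s_2$, multiply each term in the level-$n$ sum at $s_1$ by the above ratio raised to the power $s_2 - s_1 > 0$ to obtain
\begin{equation*}
\sum_{|\iii|=n} \|A_\iii^{\wedge k}\|^{k+1-s_2}\|A_\iii^{\wedge(k+1)}\|^{s_2-k} \leq (Cr^n)^{s_2-s_1}\sum_{|\iii|=n} \|A_\iii^{\wedge k}\|^{k+1-s_1}\|A_\iii^{\wedge(k+1)}\|^{s_1-k}.
\end{equation*}
Taking $\tfrac{1}{n}\log$ and letting $n \to \infty$ yields $p(s_2) - p(s_1) \leq (s_2-s_1)\log r$, so the difference quotient is bounded above by $\log r =: -c < 0$. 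The claim for arbitrary distinct $s_1, s_2 \in \mathbb{R}$ follows since interchanging $s_1$ and $s_2$ leaves the expression $(p(s_2)-p(s_1))/(s_2-s_1)$ invariant. There is no real obstacle: the only subtlety is the nonvanishing of $\|A_\iii^{\wedge k}\|$, which is the reason multipositivity is invoked via Proposition \ref{pr:bob}(iii).
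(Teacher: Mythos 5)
Your proposal is correct and follows essentially the same route as the paper: the paper establishes convexity by applying H\"older's inequality to $\sum_{|\iii|=n}(\cdots)^{\lambda}(\cdots)^{1-\lambda}$, which is exactly the log-sum-exp convexity you invoke, and its argument for the uniformly negative slope is the same manipulation you give, namely writing $\|A_\iii^{\wedge k}\|^{k+1-s}\|A_\iii^{\wedge(k+1)}\|^{s-k}=\sigma_1(A_\iii)\cdots\sigma_k(A_\iii)\sigma_{k+1}(A_\iii)^{s-k}$, factoring out $\sigma_{k+1}(A_\iii)^{s_2-s_1}\leq\bigl(C\vertle{A_\iii}\bigr)^{s_2-s_1}\leq\bigl(Cr^n\bigr)^{s_2-s_1}$, and letting $n\to\infty$.
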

\begin{proof}
If $s_1,s_2 \in \mathbb{R}$, $\lambda \in (0,1)$ and $n \geq 1$ then 
\begin{eqnarray*}\lefteqn{\sum_{|\iii|=n}\left\|A_\iii^{\wedge k}\right\|^{k+1-\lambda s_1 -(1-\lambda)s_2} \left\|A_\iii^{\wedge (k+1)}\right\|^{\lambda s_1 + (1-\lambda )s_2-k}}& &\\
& &=\sum_{|\iii|=n}\left(\left\|A_\iii^{\wedge k}\right\|^{k+1-s_1}\left\|A_\iii^{\wedge (k+1)}\right\|^{s_1-k }\right)^\lambda  \left(\left\|A_\iii^{\wedge k}\right\|^{k+1-s_2}\left\|A_\iii^{\wedge (k+1)}\right\|^{s_2-k }\right)^{1-\lambda} \\
& & \leq \left(\sum_{|\iii|=n}\left\|A_\iii^{\wedge k}\right\|^{k+1-s_1}\left\|A_\iii^{\wedge (k+1)}\right\|^{s_1-k }\right)^\lambda  \left(\sum_{|\iii|=n}\left\|A_\iii^{\wedge k}\right\|^{k+1-s_2}\left\|A_\iii^{\wedge (k+1)}\right\|^{s_2-k }\right)^{1-\lambda} \end{eqnarray*}
using H\"older's inequality with $p=1/\lambda$ and $q=1/(1-\lambda)$. Taking $n^{\mathrm{th}}$ roots and letting $n \to \infty$ it follows directly that $\rho(\mathscr{L}_{\lambda s_1+(1-\lambda)s_2}) \leq \rho(\mathscr{L}_{s_1})^\lambda \rho(\mathscr{L}_{s_2})^{1-\lambda}$ and the convexity of $p$ follows by taking logarithms. 

To complete the proof suppose that there exists a norm $\vertle{\cdot}$ on $\mathbb{R}^d$ with respect to which $\max_{1 \leq i \leq N}\vertle{A_i}<1$, and choose $C>0$ such that $\|B\| \leq C\vertle{B}$ for all $B \in M_d(\mathbb{R})$. Observe that in particular $\sigma_{k+1}(A_\iii)\leq \sigma_1(A_\iii)=\|A_\iii\|\leq C\vertle{A_\iii}$ for all $\iii \in \Sigma_N^*$. If $s_1<s_2 \in \mathbb{R}$ and $n \geq 1$ then
\begin{eqnarray*}\lefteqn{\sum_{|\iii|=n}\left\|A_\iii^{\wedge k}\right\|^{k+1-s_2} \left\|A_\iii^{\wedge (k+1)}\right\|^{s_2-k}}& &\\
& =&\sum_{|\iii|=n} \sigma_1(A_\iii)\cdots \sigma_k(A_\iii) \sigma_{k+1}(A_\iii)^{s_2-k}\\
& =&\sum_{|\iii|=n} \sigma_1(A_\iii)\cdots \sigma_k(A_\iii) \sigma_{k+1}(A_\iii)^{s_1-k}\sigma_{k+1}(A_\iii)^{s_2-s_1}\\
& \leq& \left(\max_{|\iii|=n}\sigma_{k+1}(A_\iii)\right)^{s_2-s_1} \sum_{|\iii|=n} \sigma_1(A_\iii)\cdots \sigma_k(A_\iii) \sigma_{k+1}(A_\iii)^{s_1-k}\\
& \leq& \left(\max_{|\iii|=n}C\vertle{A_\iii}\right)^{s_2-s_1} \sum_{|\iii|=n} \sigma_1(A_\iii)\cdots \sigma_k(A_\iii) \sigma_{k+1}(A_\iii)^{s_1-k}\\
& \leq& C^{s_2-s_1}\left(\max_{1 \leq i \leq N}\vertle{A_i}\right)^{n(s_2-s_1)} \sum_{|\iii|=n}\left\|A_\iii^{\wedge k}\right\|^{k+1-s_1} \left\|A_\iii^{\wedge (k+1)}\right\|^{s_1-k}\end{eqnarray*}
so that by taking the $n^{\mathrm{th}}$ root and letting $n \to \infty$ we obtain
\[\rho(\mathscr{L}_{s_2})\leq \left(\max_{1 \leq i \leq N}\vertle{A_i}\right)^{s_2-s_1}\rho(\mathscr{L}_{s_1})  \]
for all such $s_1$ and $s_2$. Taking logarithms and rearranging yields the claim with $c:=-\log \max_{1\leq i \leq N} \vertle{A_i}>0$. 
\end{proof}
Similarly to \S\ref{se:prelims} we shall say that $X_1$ is compactly contained in $X_2$ if the closure of $X_1$ is a compact subset of the interior of $X_2$, and express this relation with the notation $X_1 \Subset X_2$.
\begin{lemma}\label{le:disco}
Let $D_1,D_2 \subset \mathbb{C}$ be open discs, let  $f_n  \colon D_1 \times D_2 \to \mathbb{C}$ be a bounded holomorphic function for each $n\geq 1$, and let $f \colon D_1 \times D_2 \to \mathbb{C}$ be bounded and holomorphic. Suppose that there exists a holomorphic function $g \colon D_1 \to D_2$ such that  for all $s \in D_1$, $g(s)$ is a simple zero of the function $z \mapsto f(s,z)$ and is the unique zero of that function in $D_2$. Suppose also that
\[\lim_{n \to \infty} \sup_{s \in D_1} \sup_{z \in D_2} \left|f_n(s,z)-f(s,z)\right| =0.\]
Let $D_1'$ be any open disc which is compactly contained in $D_1$. Then there exist a disc $D_2' \subseteq D_2$, which may be chosen concentric with $D_2$ and with radius arbitrarily close to that of $D_2$, an integer $n_0\geq 1$ and holomorphic functions $g_n \colon D_1' \to D_2'$ defined for all $n\geq n_0$ such that:
\begin{enumerate}[(i)]
\item
For all $n \geq n_0$ and $s \in D_1'$, $g_n(s)$ is a simple zero of $z \mapsto f_n(s,z)$ and is the unique zero of that function in $D_2'$.
\item
For every integer $\ell \geq 0$ there exists $C_\ell>0$ such that
\[\sup_{s \in D_1'} \left|g_n^{(\ell)}(s)-g^{(\ell)}(s)\right| \leq C_\ell\sup_{s \in D_1}\sup_{z \in D_2}|f_n(s,z)-f(s,z)|\]
for all $n \geq n_0$, where $h^{(\ell)}$ denotes the $\ell^{\mathrm{th}}$ derivative of the function $h$.
\end{enumerate}
\end{lemma}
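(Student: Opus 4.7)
The plan is to combine Rouch\'e's theorem with a contour integral representation of $g_n - g$ obtained from the argument principle, then transfer the resulting sup-norm bound to derivatives via the Cauchy integral formula in $s$.

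First I introduce an intermediate disc $D_1''$ with $D_1'\Subset D_1''\Subset D_1$ and choose $D_2'$ concentric with $D_2$, with radius as close to that of $D_2$ as required, and large enough that $g(\overline{D_1''})\Subset D_2'$; this is possible because $g(\overline{D_1''})$ is a compact subset of $D_2$. Since $g(s)$ is by hypothesis the unique zero of $f(s,\cdot)$ in $D_2$, the continuous function $|f(s,z)|$ attains a positive minimum $\delta$ on the compact set $\overline{D_1''}\times\partial D_2'$. The uniform-convergence hypothesis supplies $n_0$ such that $\sup_{D_1\times D_2}|f_n - f| < \delta/2$ for every $n \geq n_0$. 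Rouch\'e's theorem applied to $f(s,\cdot)$ and $f_n(s,\cdot)$ with contour $\partial D_2'$ then gives, for every such $n$ and every $s\in\overline{D_1''}$, exactly one zero of $f_n(s,\cdot)$ in $D_2'$ counted with multiplicity; this zero is therefore simple and I call it $g_n(s)$. The argument-principle formula
\[g_n(s)\;=\;\frac{1}{2\pi i}\oint_{\partial D_2'} z\,\frac{\partial_z f_n(s,z)}{f_n(s,z)}\,dz\]
exhibits $g_n$ as a holomorphic function of $s\in D_1''$ taking values in $D_2'$, and its restriction to $D_1'$ is the sought function.

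For the quantitative bound I subtract the corresponding formula for $g$ and rewrite the integrand as $z\,\partial_z\log(f_n/f)$, using the principal branch of the logarithm. This is well defined and holomorphic on a neighborhood of $\partial D_2'$ because, for $n\geq n_0$, we have $|f_n/f - 1|\leq 1/2$ on $\partial D_2'$ and so $f_n/f$ stays in a disc about $1$ disjoint from $(-\infty,0]$. Integration by parts along the closed contour $\partial D_2'$ kills the boundary term and yields
\[g_n(s) - g(s)\;=\;-\frac{1}{2\pi i}\oint_{\partial D_2'}\log\frac{f_n(s,z)}{f(s,z)}\,dz.\]
The elementary estimate $|\log(1+w)|\leq 2|w|$ for $|w|\leq 1/2$ then gives $\sup_{s\in\overline{D_1''}}|g_n(s)-g(s)| \leq C_0\sup_{D_1\times D_2}|f_n-f|$ with $C_0$ depending only on $\delta$ and the length of $\partial D_2'$. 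Since $g_n - g$ is holomorphic on $D_1''$ and $r:=\mathrm{dist}(\overline{D_1'},\partial D_1'')>0$, standard Cauchy estimates on discs of radius $r$ centered at points of $D_1'$ give $|g_n^{(\ell)}(s) - g^{(\ell)}(s)| \leq (\ell!/r^\ell)\sup_{\overline{D_1''}}|g_n - g|$ for every $\ell\geq 0$ and every $s \in D_1'$, yielding conclusion (ii) with $C_\ell := \ell!\,C_0/r^\ell$.

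The only nontrivial technical point is verifying that the principal-branch logarithm is single-valued and holomorphic in a neighborhood of $\partial D_2'$, so that integration by parts is legitimate and the boundary contribution vanishes; once this is in hand the remainder of the proof is routine bookkeeping with Rouch\'e's theorem, the argument principle, and the Cauchy integral formula.
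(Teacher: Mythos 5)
Your proof is correct, and it takes a genuinely different route from the paper's at the key quantitative step. The paper's argument first uses a second application of Rouch\'e's theorem (on small $\varepsilon$-circles around $g(s)$) just to establish $\sup_{s}|g_n(s)-g(s)|\to 0$, and only then derives the linear rate by bounding the difference quotient $\bigl(f(s,g_n(s))-f(s,g(s))\bigr)/(g_n(s)-g(s))$ away from zero, via a Cauchy-integral estimate of how far that difference quotient can be from $\partial_z f(s,g(s))$. Your argument bypasses that two-step structure entirely: the weighted argument-principle representation $g_n(s)=\frac{1}{2\pi i}\oint_{\partial D_2'}z\,\partial_z f_n/f_n\,dz$, followed by integration by parts exploiting the single-valuedness of $\log(f_n/f)$ near $\partial D_2'$, gives the identity $g_n(s)-g(s)=-\frac{1}{2\pi i}\oint_{\partial D_2'}\log(f_n/f)\,dz$ directly, and the estimate $|\log(1+w)|\leq 2|w|$ on $|w|\leq 1/2$ immediately delivers the $\ell=0$ bound with an explicit constant $C_0=R/\delta$ (with $R$ the radius of $D_2'$). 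This is cleaner in that it never needs a separate convergence step or a positive lower bound on $|\partial_z f(s,g(s))|$; on the other hand, the paper's method is more elementary in that it avoids justifying the single-valued logarithm and the closed-contour integration by parts, which you are right to flag as the one technical point that needs care. The transfer to derivatives by Cauchy estimates on an intermediate disc is the same in both arguments. Minor remarks: your verification that $\log(f_n/f)$ is single-valued should note that the neighbourhood of $\partial D_2'$ on which $|f_n/f-1|<1/2$ can be taken uniform in $s\in\overline{D_1''}$ by compactness; and you should observe that $\partial_z f_n$ is uniformly bounded on $\overline{D_1''}\times\partial D_2'$ (via Cauchy estimates from the boundedness of $f_n$), so that the weighted argument-principle integral indeed defines a holomorphic function of $s$. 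Neither point is a gap, just bookkeeping you have implicitly deferred.
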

\begin{proof}
Throughout the proof let $D_3$ be an open disc such that $D_1' \Subset D_3 \Subset D_1$. By compactness and continuity we have $g(\overline{D_3})\Subset D_2$. Let $D_2' \Subset D_2$ be any disc which is concentric with $D_2$ and has radius large enough that $g(\overline{D_3})\Subset D_2'$. By compactness and continuity we obtain
\[\inf_{s \in \overline{D_3}} \inf_{z \in \partial D_2'} |f(s,z)|>0\]
and hence by uniform convergence there exists $n_1 \geq 1$ such that for all $n\geq n_1$
\[\sup_{s \in \overline{D_3}} \sup_{z \in \partial D_2'} |f(s,z)-f_n(s,z)|<\inf_{s \in \overline{D_3}} \inf_{z \in \partial D_2'} |f(s,z)|.\]
It follows by Rouch\'e's theorem that for every $s \in \overline{D_3}$ and $n\geq n_1$ there exists a unique zero $g_n(s)$ of the function $z \mapsto f_n(s,z)$ in $D_2'$ and this zero is simple. Since each $f_n$ is holomorphic it follows by the holomorphic implicit function theorem (see e.g. \cite[p.34]{FrGr02}) that each $g_n \colon \overline{D_3} \to D_2'$ is holomorphic on $D_3$. 

We claim now that 
\[\lim_{n \to \infty}\sup_{s \in \overline{D_3}} |g_n(s)-g(s)|=0.\]
Indeed, let $\varepsilon>0$ be any number which is small enough that for every $s \in \overline{D_3}$ the closed $\varepsilon$-ball centred at $g(s)$ is a subset of $D_2'$. By compactness and the absence of zeros of $z \mapsto f(s,z)$ in $D_2 \setminus \{g(s)\}$ we have
\[\inf_{s \in \overline{D_3}} \inf_{|z-g(s)|=\varepsilon} |f(s,z)|>0\]
so that in the same manner if $n$ is large enough 
\[\sup_{s \in \overline{D_3}} \sup_{|z-g(z)|=\varepsilon} |f(s,z)-f_n(s,z)|<\inf_{s \in\overline{D_3}} \inf_{|z-g(s)|=\varepsilon} |f(s,z)|.\]
Applying Rouch\'e's theorem again it follows that if $n$ is sufficiently large then for all $s \in \overline{D_3}$ there is a unique zero of the function $z \mapsto f_n(s,z)$ in the region $0 \leq |z-g(s)|<\varepsilon$. This zero belongs to $D_2'$ and hence is necessarily equal to $g_n(s)$, and we therefore have $\sup_{s \in \overline{D_3}}|g_n(s)-g(s)|\leq \varepsilon$. Since $\varepsilon$ was arbitrary we conclude that 
\begin{equation}\label{eq:clams}\lim_{n \to \infty}\sup_{s \in \overline{D_3}} |g_n(s)-g(s)|=0\end{equation}
as claimed.

For each $s \in D_1$ the value $g(s)$ is a simple zero of the function $z \mapsto f(s,z)$, so we have $\frac{\partial f}{\partial z}(s,g(s))\neq0$ for all $s \in D_1$. Define
\[c:=\inf_{s \in \overline{D_3}} \left|\frac{\partial f}{\partial z}(s,g(s))\right|>0.\]
Since $g(\overline{D_3}) \Subset D_2' \Subset D_2$ we may choose $\tau>0$ small enough that for every $z \in \partial D_2'$ the closed ball of radius $2\tau$ centred at $z$ is a subset of $D_2$ which does not intersect $g(\overline{D_3})$. Using \eqref{eq:clams} take $n_2 \geq n_1$ large enough that
\[\sup_{s \in \overline{D_3}} |g_n(s)-g(s)|<\tau\]
for all $n \geq n_2$. Observe that if $s \in \overline{D_3}$ and $n \geq n_2$ then $|g_n(s)-g(s)|<\tau$ and $|g(s)-\omega|>2\tau$ and therefore $|g_n(s)-\omega|>\tau$ for all $\omega \in \partial D_2'$. Using Cauchy's integral formula, for any two distinct points $z_1, z_2 \in D_2'$ we have
\begin{eqnarray*}\lefteqn{\frac{f(s,z_1)-f(s,z_2)}{z_1-z_2}-\frac{\partial f}{\partial z}(s,z_2)}& &\\
&  =& \frac{1}{2\pi i}\int_{\partial D_2'} \frac{f(s,\omega)}{(z_1-z_2)(\omega-z_1)}- \frac{f(s,\omega)}{(z_1-z_2)(\omega-z_2)} - \frac{f(s,\omega)}{(\omega-z_2)^2}d\omega\\
&  =&\frac{1}{2\pi i}\int_{\partial D_2'} \frac{f(s,\omega)((\omega-z_2)^2-(\omega-z_1)(\omega-z_2)-(z_1-z_2)(\omega-z_1))}{(z_1-z_2)(\omega-z_1)(\omega-z_2)^2}d\omega\\
& =&\frac{1}{2\pi i}\int_{\partial D_2'} \frac{f(s,\omega)(z_1^2-2z_1z_2+z_2^2)}{(z_1-z_2)(\omega-z_1)(\omega-z_2)^2}d\omega\\
&=&\frac{1}{2\pi i}\int_{\partial D_2'} \frac{f(s,\omega)(z_1-z_2)}{(\omega-z_1)(\omega-z_2)^2}d\omega.\end{eqnarray*}
Hence if $s \in \overline{D_3}$, $n \geq n_2$ and $g_n(s) \neq g(s)$ then since $g(s),g_n(s)\in D_2'$ 
\[\left|\frac{f(s,g_n(s))-f(s,g(s))}{g_n(s)-g(s)}-\frac{\partial f}{\partial z}(s,g(s))\right|\leq \frac{R|g_n(s)-g(s)|}{\tau^3} \cdot \sup_{t \in D_1} \sup_{z \in D_2} |f(t,z)|\]
where $R$ denotes the radius of $D_2'$. Now take $n_3 \geq n_2$ large enough that additionally
\[\left(\sup_{s \in \overline{D_3}} |g_n(s)-g(s)|  \right)\left(\frac{R}{\tau^3} \sup_{s \in D_1} \sup_{z \in D_2} |f(s,z)|\right)<\frac{c}{2}.\]
If $n \geq n_3$, $s \in \overline{D_3}$ and $g_n(s) \neq g(s)$ then since $f_n(s,g_n(s))=0=f(s,g(s))$ we have
\begin{eqnarray*}
\lefteqn{\left|\frac{f(s,g_n(s))-f_n(s,g_n(s))}{g_n(s)-g(s)}\right|}& &\\
& =&\left|\frac{f(s,g_n(s))-f(s,g(s))}{g_n(s)-g(s)}\right|\\
&\geq& \left|\frac{\partial f}{\partial z}(s,g(s))\right|-\left|\frac{f(s,g_n(s))-f(s,g(s))}{g_n(s)-g(s)} -\frac{\partial f}{\partial z}(s,g(s))\right|>\frac{c}{2}.
\end{eqnarray*}
It follows that when $n \geq n_3$ 
\[\sup_{s \in \overline{D_3}} |g_n(s)-g(s)| \leq \frac{2}{c} \sup_{s \in \overline{D_3}}\sup_{z \in D_2}|f_n(s,z)-f(s,z)|.\]
To complete the proof of the lemma let $\delta>0$ be small enough that for every $s \in D_1'$ the closed $\delta$-ball centred at $s$ is a subset of $D_3$. By the Cauchy integral formula we have for each integer $\ell\geq 0$ and every $n \geq n_3$
\begin{align*}\sup_{s \in D_1'} \left|g^{(\ell)}_n(s)-g^{(\ell)}(s)\right| &\leq \sup_{s \in D_1'} \left|\frac{\ell!}{2\pi i} \int_{|s-t|=\delta} \frac{g_n(t)-g(t)}{(t-s)^{\ell+1}} dt \right|\\
&\leq \delta^{-\ell}\ell! \sup_{s \in \overline{D_3}} |g_n(s)-g(s)|\\
&\leq \frac{2\ell!}{c\delta^\ell} \sup_{s \in D_1}\sup_{z \in D_2} |f_n(s,z)-f(s,z)|\end{align*}
as required. The proof is complete.
\end{proof}

\begin{proof}[Proof of Theorem \ref{th:main}]
Let $(A_1,\ldots,A_N)\in M_d(\mathbb{R})^N$ be $k$- and $(k+1)$-multipositive where $N,d \geq 2$ and $0\leq k<d$. For all $s \in \mathbb{C}$ let $\mathscr{L}_s \colon\mathscr{H}\to \mathscr{H}$ be as given by Theorem \ref{th:opter}.
Let $t_n(s)$ and $a_n(s)$ be as defined in the statement of Theorem \ref{th:main}. We claim that there exist $\tilde K, \tilde \gamma,\kappa>0$ such that 
\begin{equation}\label{eq:astrosheep-6000}|a_n(s)|\leq \tilde{K}^n e^{n \kappa|s|} \exp\left(-\tilde\gamma n^\alpha\right)\end{equation}
for all $n \geq 1$ and $s \in \mathbb{C}$, where $\tilde{K},\tilde{\gamma}$ and $\kappa$ do not depend on $s$ or $n$ and where
\[\alpha:= \frac{{d+1 \choose k+1}-1}{{d+1\choose k+1}-2} = 1+\frac{1}{{d\choose k}+{d\choose k+1}-2}.\]
By Theorem \ref{th:opter} there exist constants $C,\gamma,\kappa>0$ such that
\begin{equation}\label{eq:todos-os-garotos-e-todos-as-garotas-querem}\mathfrak{s}_n(\mathscr{L}_s)\leq C\exp\left(\kappa|s| - \gamma n^\beta\right) \end{equation}
for all $n \geq 1$ and $s \in \mathbb{C}$ where $\beta:=({d +1\choose k+1}-2)^{-1}=\alpha-1$, and $\mathscr{L}_s$ is trace-class with $\tr \mathscr{L}_s^n=t_n(s)$ for all $s \in \mathbb{C}$. By Theorem \ref{th:ofundity}  we have
\begin{equation}\label{eq:se-voce-procura-a-amy}|a_n(s)| \leq \sum_{i_1<\cdots<i_n}\mathfrak{s}_{i_1}(\mathscr{L}_s) \cdots \mathfrak{s}_{i_n}(\mathscr{L}_s)\end{equation}
for all $n \geq 1$. In order to proceed further we require two elementary inequalities. We first note that for every integer $m\geq 2$
\begin{align}\label{eq:bohnenstadt}\sum_{\ell=m}^\infty e^{-\gamma \ell^\beta} \leq \int_{m-1}^\infty e^{-\gamma t^\beta}dt&=\frac{1}{\beta}\int_{\left(m-1\right)^\beta}^\infty u^{\frac{1}{\beta}-1}e^{-\gamma u}du\\\nonumber
 &\leq \frac{K}{\beta}\int_{\left(m-1\right)^\beta}^\infty e^{-\frac{\gamma}{2}u}du\\\nonumber
&\leq \frac{2K}{\beta\gamma} e^{-\frac{\gamma}{2} \left(m-1\right)^\beta}\leq \frac{2K}{\beta\gamma}e^{-\frac{\gamma}{2^{1+\beta}}m^\beta} \end{align}
where $K:=\sup\{ x^{\frac{1}{\beta}-1} e^{-\gamma x/2}\colon x\geq \frac{1}{2}\}>0$ depends only on $\beta$ and $\gamma$, and by increasing $K$ if necessary we have $\sum_{\ell=m}^\infty e^{-\gamma \ell^\beta} \leq \frac{2K}{\beta\gamma}e^{-\frac{\gamma}{2^{1+\beta}}m^\beta}$ also for $m=1$. Secondly we notice that 
\begin{equation}\label{eq:haricotville}\sum_{\ell=1}^m \ell^\beta \geq  \int_0^m t^\beta dt = \frac{m^{1+\beta}}{1+\beta}\end{equation}
for all integers $m\geq 1$ since the series is an upper Riemann sum for the integral. Combining \eqref{eq:todos-os-garotos-e-todos-as-garotas-querem}, \eqref{eq:se-voce-procura-a-amy}, \eqref{eq:bohnenstadt} and \eqref{eq:haricotville} we may now obtain
\begin{align*}|a_n(s)|&\leq  \sum_{i_1<\cdots<i_n} \prod_{\ell=1}^n C\exp(\kappa |s|-\gamma i_\ell^\beta)\\
& = \left(Ce^{\kappa |s|}\right)^n\sum_{i_1<\cdots<i_n}\exp\left(-\gamma \left(i_1^\beta+\cdots + i_n^\beta\right)\right)\\
&\leq\left(Ce^{\kappa |s|}\right)^n\sum_{i_1=1}^\infty \sum_{i_2=2}^\infty \cdots \sum_{i_n=n}^\infty \exp\left(-\gamma \left(i_1^\beta+\cdots + i_n^\beta\right)\right)\\
&=\left(Ce^{\kappa |s|}\right)^n\prod_{m=1}^n \sum_{\ell=m}^\infty \exp\left(-\gamma \ell^\beta\right) \\
&\leq \left(\frac{2KCe^{\kappa |s|}}{\beta\gamma}\right)^n\prod_{m=1}^n  \exp\left(-\frac{\gamma}{2^{1+\beta}} m^\beta\right) \\
&\leq \left(\frac{2KCe^{\kappa |s|}}{\beta\gamma}\right)^n  \exp\left(-\frac{\gamma}{(1+\beta)2^{1+\beta}} n^{1+\beta}\right)\end{align*}
which establishes the claimed inequality \eqref{eq:astrosheep-6000} with $\tilde\gamma :=\gamma/(2^{1+\beta}(1+\beta))$ and $\tilde{K}:=2KC/\beta\gamma$.

Now define a function $d_n \colon \mathbb{C}^2 \to \mathbb{C}$ for each $n\geq 1$ by $d_n(s,z):=\sum_{m=0}^n a_m(s)z^m$, and define also $d_\infty(s,z):=\sum_{m=0}^\infty a_m(s)z^m$, the convergence of the series being guaranteed by \eqref{eq:astrosheep-6000}. As a consequence of \eqref{eq:astrosheep-6000} it is clear that
\begin{equation}\label{eq:electric-head-part-196883}
\left|d_n(s,z)-d_\infty(s,z)\right| =\left|\sum_{m=n+1}^\infty a_m(s)z^m\right|=O\left(\exp\left(-\frac{\tilde\gamma}{2}n^\alpha\right)\right)
\end{equation}
uniformly on compact subsets of $\mathbb{C}^2$. It is clear by inspection that each $d_n$ is holomorphic, and using the convergence of $d_n$ to $d_\infty$ uniformly on compact sets together with Cauchy's theorem and Morera's theorem it follows easily that $d_\infty \colon \mathbb{C}^2 \to \mathbb{C}$ is holomorphic.  By Theorem \ref{th:ofundity} we have $d_\infty(s,z)=\det (I-z\mathscr{L}_s)$ for every $(s,z) \in \mathbb{C}^2$. In particular for every $s \in \mathbb{C}$ the zeros of $z \mapsto d_\infty(s,z)$ are precisely the reciprocals of the nonzero eigenvalues of $\mathscr{L}_s$, with the degree of each zero being equal to the algebraic multiplicity of the corresponding eigenvalue.

For each $s \in \mathbb{R}$ define $r_\infty(s):=\rho(\mathscr{L}_s)^{-1} \in (0,+\infty)$. We observe that $p(s)=-\log r_\infty(s)$ is a continuous function of $s$ by Lemma \ref{le:do-not-press} since it is a convex function of $s \in \mathbb{R}$, so $r_\infty(s) \colon \mathbb{R} \to (0,+\infty)$ is continuous. By the combination of Theorem \ref{th:opter} and Theorem \ref{th:ofundity}, for each $s \in \mathbb{R}$ the function $z\mapsto d_\infty(s,z)$ has a simple zero at $r_\infty(s)$ and has no zeroes with equal or smaller absolute value. We claim that there exist $n_0 \geq 1$, an open set $U \subset \mathbb{C}$ containing $[k,k+1]$, a holomorphic extension of $r_\infty|_{[k,k+1]}$ to $U$ and a sequence of holomorphic functions $r_n \colon U \to \mathbb{C}$ defined for all $n \geq n_0$ such that
\begin{equation}\label{eq:fun-to-funky}\sup_{s \in U} \left|r_n^{(\ell)} (s)-r_\infty^{(\ell)}(s)\right| = O\left(\exp\left(-\frac{\tilde \gamma}{2} n^\alpha\right)\right)\end{equation}
for all integers $\ell \geq 0$ and such that for all $n \geq n_0$ and $s \in [k,k+1]$, $r_n(s)$ is the smallest positive real number $x$ such that $d_n(s,x)=0$.

To prove the claim it is clearly sufficient, by the compactness of $[k,k+1]$, to show that every $s_0 \in [k,k+1]$ admits an open neighbourhood $U(s_0)$ such that $r_\infty$ extends holomorphically from $U(s_0)\cap [k,k+1]$ to all of $U(s_0)$, such that there exists a sequence of functions $r_n \colon U(s_0) \to \mathbb{C}$ defined for all large enough $n$ such that for all $s \in [k,k+1] \cap U(s_0)$, $r_n(s)$ is the smallest positive real number $x$ such that $d_n(s,x)=0$, and such that
\[\sup_{s \in U(s_0)} \left|r_n^{(\ell)} (s)-r_\infty^{(\ell)}(s)\right| = O\left(\exp\left(-\frac{\tilde \gamma}{2} n^\alpha\right)\right)\]
for all integers $\ell \geq 0$. The open set $U$ can then be taken equal to the union of a finite cover of $[k,k+1]$ by different sets $U(s)$, and the characterisation of $r_n(s)$ as the smallest positive root of $d_n(s,x)=0$ ensures that for each $n$ the local functions $r_n \colon U(s) \to \mathbb{C}$ extend consistently to a single well-defined function $r_n \colon U \to \mathbb{C}$. 

Let us therefore prove this local version of the preceding claim. Fix $s_0 \in [k,k+1]$. Since $z \mapsto d_\infty(s_0,z)$ has a unique zero in the closed disc with centre $0$ and radius $r_\infty(s_0)$, and all of its zeros are isolated, we may choose an open disc $D_2(s_0)$ with centre $z_0 \in \mathbb{R}$ and radius $R>0$ such that $[0,r_\infty(s_0)] \subset D_2(s_0)$ and such that $\overline{D_2(s_0)}$ contains no other zeros of $z \mapsto d_\infty(s_0,z)$.  A simple argument using compactness shows that we may choose a small open disc $D_1(s_0)$ centred at $s_0$ such that 
\[\sup_{s \in D_1(s_0)} \sup_{|z-z_0|=R} \left|d_\infty(s,z)-d_\infty(s_0,z)\right|<\inf_{|z-z_0|=R} \left|d_\infty(s_0,z)\right|\]
and by shrinking the neighbourhood $D_1(s_0)$ further if necessary we may assume using continuity that additionally $r_\infty(s) \in D_2(s_0)$ for all $s \in D_1(s_0)\cap [k,k+1]$.

By Rouch\'e's theorem, for all $s \in D_1(s_0)$ the function $z \mapsto d_\infty(s,z)$ has a unique zero in $D_2(s_0)$ and this zero is simple. When $s \in D_1(s_0) \cap[k,k+1]$ this zero must be equal to $r_\infty(s) \in D_2(s_0)$ by uniqueness. Extend $r_\infty \colon D_1(s_0)\cap [k,k+1] \to \mathbb{R}$ to a function $D_1(s_0)\to \mathbb{C}$ by defining $r_\infty(s)$ to be the unique zero of $z \mapsto d_\infty(s,z)$ in $D_2(s_0)$ for each $s \in D_1(s_0)$. By the holomorphic implicit function theorem and the simplicity of the zero $r_\infty \colon D_1(s_0)\to D_2(s_0)$ is holomorphic. Applying Lemma \ref{le:disco} we find, shrinking $D_1(s_0)$ and $D_2(s_0)$ if necessary, that there exist constants $C_\ell>0$, an integer $n_1 \geq 1$ and holomorphic functions $r_n \colon D_1(s_0) \to D_2(s_0)$ defined for all $n \geq n_1$ such that
\begin{align*}\sup_{s \in D_1(s_0)} \left|r_n^{(\ell)}(s)-r_\infty^{(\ell)}(s)\right| &\leq C_\ell \sup_{s \in D_1(s_0)}\sup_{z \in D_2(s_0)} \left| d_n(s,z)-d_\infty(s,z)\right|\\
&= O\left(\exp\left(-\frac{\tilde \gamma}{2} n^\alpha\right)\right)\end{align*}
for every integer $\ell \geq 0$, such that $r_n(s)$ is the unique zero of $z \mapsto d_n(s,z)$ in $D_2(s_0)$ for all $s \in D_1(s_0)$ and $n \geq n_1$ and is a simple zero for all such $s$ and $n$, such that $[0,r_\infty(s_0)] \subseteq D_2(s_0)$, and such that $D_2(s_0)$ is an open disc centred on the real axis. For all $s \in D_1(s_0) \cap [k,k+1]$ and $n \geq n_0$ the numbers $r_n(s)$ and $r_n(s)^*$ both lie in $D_2(s_0)$ and are both zeros of the polynomial $d_n(s,z)=\sum_{m=0}^n a_n(s)z^m$ since the coefficients of that polynomial are real and since $D_2(s_0)$, being a disc centred on the real axis, is symmetric with respect to complex conjugation. By the uniqueness of the zero $r_n(s)$ in $D_2(s_0)$ this is possible only if $r_n(s)=r_n(s)^*$, which is to say if $r_n(s)$ is real. Since $D_2(s_0)$ contains the interval from $0$ to $r_n(s)$, it follows that if $r_n(s)$ is positive then it is the smallest positive real root of $\sum_{m=0}^n a_n(s)x^m$ for all $s \in D_1(s_0)\cap [k,k+1]$. To complete the proof of the claim it therefore suffices to show that if $n$ is sufficiently large then $r_n(s)>0$ for all $s \in D_1(s_0)$. To see this choose $\delta \in (0,r_\infty(s_0))$ small enough that the open $\delta$-ball centred at $r_\infty(s_0)$ is contained in $D_2(s_0)$, and observe that by shrinking $D_1(s_0)$ further if necessary we may obtain
\[\inf_{s \in D_1(s_0)} \inf_{|z-r_\infty(s_0)|=\delta} |d_\infty(s,z)|>0\]
and hence for all large enough $n$
\[\sup_{s \in D_1(s_0)} \sup_{|z-r_\infty(s_0)|=\delta} |d_n(s,z)-d_\infty(s,z)|< \inf_{s \in D_1(s_0)} \inf_{|z-r_\infty(s_0)|=\delta} |d_\infty(s,z)|.\]
By Rouch\'e's theorem this implies that there exists $n_0\geq n_1$ such that for all $n \geq n_0$ and all $s \in D_1(s_0)$ there is a unique zero of $z \mapsto d_n(s,z)$ inside the circle of radius $\delta$ and centre $r_\infty(s_0)$, and since this region is a subset of $D_2(s_0)$ this root must equal $r_n(s)$ by the uniqueness of that root in $D_2(s_0)$. In particular for all $n \geq n_0$ and $s \in D_1(s_0) \cap [k,k+1]$ we have $r_n(s)>r_\infty(s_0)-\delta>0$ and no other root lies in $(0,r_n(s)) \subset D_2(s_0)$. Hence $r_n(s)$ is the smallest positive real root of $\sum_{m=0}^n a_n(s)x^m$ for all $s \in D_1(s_0)\cap [k,k+1]$ as required to prove the local version of the claim with $U(s_0):=D_1(s_0)$. The full statement of the claim follows.

We may now complete the proof of the theorem. Define $P_n(s):=r_n(s)^{-1}>0$ for all $s \in [k,k+1]$ and $n \geq n_0$, and $P(s):=r_\infty(s)^{-1}>0$ for all $s \in \mathbb{R}$. Observe that by Theorem \ref{th:opter} we have $e^{P(A_1,\ldots,A_N;s)}=P(s)$ for all $s \in [k,k+1]$. Since $r_\infty \colon U \to \mathbb{C}$ is holomorphic, $P$ is real-analytic at least on a neighbourhood of $[k,k+1]$. Since $r_\infty(s)$ is positive for all real $s$ and $[k,k+1]$ is compact it follows that
\begin{equation}\label{eq:cheesefish}\inf_{s \in [k,k+1]}r_\infty(s)>0\end{equation}
and by the case $\ell=0$ of \eqref{eq:fun-to-funky} we deduce that 
\begin{equation}\label{eq:toastfish}\lim_{n\to \infty} \inf_{s \in [k,k+1]}r_n(s)>0.\end{equation}
Using \eqref{eq:fun-to-funky}, \eqref{eq:cheesefish}, \eqref{eq:toastfish} and the expressions
\[\left|P_n(s)-P(s)\right| = \left|\frac{1}{r_n(s)}-\frac{1}{r_\infty(s)}\right|,\]
\[\left|P'_n(s)-P'(s)\right| = \left|\frac{r_n'(s)}{r_n(s)^2}-\frac{r_\infty'(s)}{r_\infty(s)^2}\right|\]
and
\[\left|P''_n(s)-P''(s)\right| = \left|\frac{r_n''(s)r_n(s)-r_n'(s)^2}{r_n(s)^4}-\frac{r_\infty''(s)r_\infty(s)-r_\infty'(s)^2}{r_\infty(s)^4}\right|\]
it follows by elementary manipulations that
\begin{equation}\label{eq:cnut}\sup_{s \in [k,k+1]}\left|P_n(s)-P(s)\right| = O\left(\exp\left(-\frac{\tilde\gamma}{2}n^\alpha\right)\right),\end{equation}
\begin{equation}\label{eq:wnaker}\sup_{s \in [k,k+1]}\left|P'_n(s)-P'(s)\right| = O\left(\exp\left(-\frac{\tilde\gamma}{2}n^\alpha\right)\right)\end{equation}
and
\begin{equation}\label{eq:avocado-toast-is-the-end-of-civilisation-according-to-the-daily-telegraph}\sup_{s \in [k,k+1]}\left|P''_n(s)-P''(s)\right| = O\left(\exp\left(-\frac{\tilde\gamma}{2}n^\alpha\right)\right).\end{equation}
In the case where we do not assume that $\max_{1 \leq i \leq N}\vertle{A_i}<1$ for some norm on $\mathbb{R}^d$ the estimate \eqref{eq:cnut} already completes the proof of Theorem \ref{th:main}. Otherwise, we claim that $\inf_{s \in [k,k+1]}P''(s)>0$ and $\sup_{s \in [k,k+1]}P'(s)<0$. Let $p(s):=\log P(s)$ for $s \in \mathbb{R}$ so that $P'(s)=p'(s)P(s)$ and $P''(s)=p''(s)P(s)+p'(s)^2P(s)$. Obviously $p$ is real-analytic on $[k,k+1]$ since $P$ is positive and real-analytic there, and $p$ is convex by Lemma \ref{le:do-not-press}, so necessarily $p''(s)\geq 0$ for all $s \in [k,k+1]$. By Lemma \ref{le:do-not-press} we have $p'(s)<0$ for all $s \in [k,k+1]$ and therefore
\begin{equation}\label{eq:hi-ms-coutts}\sup_{s \in [k,k+1]}P'(s)=\sup_{s \in [k,k+1]} p'(s)P(s)<0.\end{equation}
Similarly we observe that $\inf_{s \in [k,k+1]}|p'(s)|>0$, and since $P''(s)=p''(s)P(s)+p'(s)^2P(s)\geq p'(s)^2P(s)$ we likewise deduce that $\inf_{s \in [k,k+1]}P''(s)>0$ as claimed. 

Combining the previous claim with \eqref{eq:avocado-toast-is-the-end-of-civilisation-according-to-the-daily-telegraph} we find in particular that $\inf_{s \in [k,k+1]}P''_n(s)>0$ for all large enough $n$, which proves that each such function $P_n \colon [k,k+1] \to \mathbb{R}$ is convex. By the hypothesis $\dimaff(A_1,\ldots,A_N) \in (k,k+1)$ of Theorem \ref{th:main} there exists a solution $s \in (k,k+1)$ to $P(s)=1$, and since $P$ has negative derivative on $[k,k+1]$ this implies that $P(k)>1>P(k+1)$. Combining this observation with  \eqref{eq:cnut} we find that $P_n(k)>1>P_n(k+1)$ for all large enough $n$, and by the combination of \eqref{eq:hi-ms-coutts} and \eqref{eq:wnaker} we find that $\sup_{s \in [k,k+1]}P'_n(s)\leq -c<0$ for all large enough $n$ where $c>0$ is some positive constant. It follows that for all large enough $n$ there exists a unique $s_n \in [k,k+1]$ such that $P_n(s_n)=1$. Let $s_\infty:=\dimaff(A_1,\ldots,A_N) \in [k,k+1]$ be the unique solution to $P(s_\infty)=1$. If $s_n \neq s_\infty$ then by the Mean Value Theorem there exists $t$ strictly between $s_n$ and $s_\infty$ such that
\[P'(t)=\frac{P(s_n)-P(s_\infty)}{s_n-s_\infty}\]
and therefore since $P_n(s_n)=1=P(s_\infty)$ we obtain
\[\left|s_n-s_\infty\right|=\frac{|P(s_n)-P(s_\infty)|}{|P'(t)|} =\frac{|P(s_n)-P_n(s_n)|}{|P'(t)|}\\\leq c^{-1}|P(s_n)-P_n(s_n)|.\]
The inequality $|s_n-s_\infty| \leq c^{-1}|P(s_n)-P_n(s_n)|$ obviously also holds when $s_n=s_\infty$, so
\[\left|s_n-s_\infty\right|=O\left(\exp\left(-\frac{\tilde\gamma}{2}n^\alpha\right)\right)\]
as $n \to \infty$ using \eqref{eq:cnut}.  The proof of the theorem is complete.
\end{proof}


\section{Examples}\label{se:apps}

\subsection{Methodology}\label{ss:urk}

There are two intuitively natural mechanisms by which to make the approximations given in Theorem \ref{th:main} yield an approximation to the affinity dimension. On the one hand since $e^{P(A_1,\ldots,A_n;s)}$ is decreasing in $s$ and since the affinity dimension is the unique $s \in [k,k+1]$ such that $1$ is the leading eigenvalue of $\mathscr{L}_s$, the affinity dimension corresponds to the smallest $s \in [k,k+1]$ such that $\det(I-\mathscr{L}_s)=0$, which is to say the smallest $s \in [k,k+1]$ such that $\sum_{m=0}^\infty a_m(s)=0$. One might therefore attempt to approximate the affinity dimension by looking for the smallest solution $s$ to the equation $\sum_{m=0}^na_m(s)=0$ for each fixed $n$. In practice this is impractical since $\mathscr{L}_s$ may in general have infinitely many positive real eigenvalues and the number of solutions to $\sum_{m=0}^na_m(s)=0$ may therefore be extremely large and the function itself highly oscillatory. 

\begin{table}
\begin{center}
\begin{tabular}{*3c}
\toprule
$n$  & Approximation to affinity dimension&CPU time\\
\midrule
2& 1.14341 79598 76019 95000 60486 91827 85789 60135 &0.043s \\
3&\underline{1.1}1827 23247 08006 28499 89060 66409 13091 47143&0.044s\\
4 &\underline{1.11}538 89736 67461 99644 51849 00512 18003 54788&0.053s\\
5 &\underline{1.115}60 42107 66261 56209 11669 09958 04069 77087&0.075s\\
6 &\underline{1.11560} 31850 39305 08475 98379 83168 80085 68510&0.11s\\
7 &\underline{1.11560} \underline{3}2522 24751 03699 38823 87724 66623 37012&0.16s\\
8 & \underline{1.11560} \underline{325}79 27402 64806 11546 27227 11083 45893&0.30s\\
9 &\underline{1.11560} \underline{3257}7 86505 71154 77556 50836 85812 53178&0.39s\\
10 &\underline{1.11560} \underline{32577} \underline{8}7028 88533 65835 00045 83936 61000& 0.67s\\
11 &\underline{1.11560} \underline{32577} \underline{870}30 91898 36777 33249 49956 17495&1.2s\\
12 &\underline{1.11560} \underline{32577} \underline{87030} 89197 97928 71446 51257 73313&2.0s\\
13 & \underline{1.11560} \underline{32577} \underline{87030} \underline{89}218 88050 96492 48585 23429 &4.3s\\
14 &\underline{1.11560} \underline{32577} \underline{87030} \underline{89218} \underline{8}4942 17623 75680 33697&8.8s\\
15 &\underline{1.11560} \underline{32577} \underline{87030} \underline{89218} \underline{849}37 14660 75123 27001 &20s\\
16 & \underline{1.11560} \underline{32577} \underline{87030} \underline{89218} \underline{84937} \underline{14}840 85419 85122 &44s\\
17& \underline{1.11560} \underline{32577} \underline{87030} \underline{89218} \underline{84937} \underline{14840} 24544 08248&100s\\
18 & \underline{1.11560} \underline{32577} \underline{87030} \underline{89218} \underline{84937} \underline{14840} \underline{245}74 24137&210s\\
19 &\underline{1.11560} \underline{32577} \underline{87030} \underline{89218} \underline{84937} \underline{14840} \underline{24574} \underline{2}5551&440s\\
20 &\underline{1.11560} \underline{32577} \underline{87030} \underline{89218} \underline{84937} \underline{14840} \underline{24574} \underline{25551}&990s\\
\bottomrule
\end{tabular}\bigskip\medskip
\caption{Approximations to the affinity dimension of Example 1 calculated using Theorem \ref{th:main} and the secant method as described in \S\ref{ss:urk}, implemented in Wolfram Mathematica. The CPU time used in each approximation is as reported by Mathematica's {\tt{Timing}} function. For $n=1$ the approximation to the pressure function has no root in $(1,2)$ and this line is therefore omitted from the table. Digits which are empirically observed to have converged to a stable value are underlined.
}\label{ta:blen}
\end{center}
\end{table}

In practice we therefore adopt the following alternative approach. For large $n$ the smallest positive real root $x=r_n(s)$ of $\sum_{m=0}^n a_m(s)x^m$ approximates the reciprocal of the leading eigenvalue of $\mathscr{L}_s$. Moreover, for large $n$ the function $s \mapsto r_n(s)^{-1}$ is convex and strictly decreasing with a unique root in $[k,k+1]$ by virtue of Theorem \ref{th:main}. Computing the unique root of a convex decreasing function is a far more tractable enterprise than finding the smallest root of an oscillating function, and for this reason our application of Theorem \ref{th:main} follows the approach of solving $r_n(s)=1$. For this problem we use the secant method. Since $r_n^{-1}$ is convex and decreasing the convergence of the sequence of approximations generated by the secant method is guaranteed with super-exponential rate $O(\theta^{m^{(1+\sqrt{5})/2}})$ for some $\theta \in (0,1)$. In practical instances we found that the sequence $(s_m)$ consistently converged empirically to 40 decimal places by around $m\simeq 12$ independently of $n$. The results of this procedure applied to some examples of two- and three-dimensional affine iterated function systems are presented in this section.

For large $n$ one may show that the trace $t_n(s)$ appearing in Theorem \ref{th:main} approximates the value $e^{nP(A_1,\ldots,A_N;s)}$ whereas the coefficients $a_n(s)$ are shown in Theorem \ref{th:main} to decrease to zero with super-exponential speed. The small size of $a_n(s)$ is thus attributable to additive cancellation between potentially very large summands. It is therefore likely to be necessary in implementation to record the traces $t_n(s)$ to significantly more decimal places than are desired for the ultimate approximation. In the computations which follow the traces $t_n(s)$ were calculated in arbitrary precision, reducing to finite precision only for the outcome of the calculation of the coefficients $a_n(s)$.

\subsection{Example 1: a pair of dominated matrices}\label{ss:slary}
Define
\[A_1:=
\begin{pmatrix}
-\frac{4}{7}&\frac{5}{7}\\
0&\frac{1}{7}
\end{pmatrix},\qquad
A_2:=\begin{pmatrix}
\frac{1}{7}&0\\
-\frac{5}{7}&-\frac{4}{7}
\end{pmatrix}.
\]
We claim that the pair $(A_1,A_2)$ is $1$-dominated. Indeed, define
\[\mathcal{C}_1:=\left\{\begin{pmatrix}x\\y\end{pmatrix} \in \mathbb{R}^2 \colon |x|\geq 2|y|\right\},\]
\[\mathcal{C}_2:=\left\{\begin{pmatrix}x\\y\end{pmatrix} \in \mathbb{R}^2 \colon |y|\geq 2|x|\right\}.\]
If $(x,y)^\top \in \mathcal{C}_1$ then
\[\left|\frac{5}{7}y-\frac{4}{7}x\right|\geq \frac{4}{7}|x|-\frac{5}{7}|y|\geq \frac{3}{7}|y|\geq \left|\frac{2}{7}y\right|\]
and equality of the first and last terms is only possible if $y=0$ and consequently $x=0$. In particular if $(x,y)^\top \in \mathcal{C}_1$ is nonzero we obtain $A_1(x,y)^\top \in \Int \mathcal{C}_1$. Moreover for $(x,y)^\top \in \mathcal{C}_1$ we also have 
\[\left|\frac{4}{7}y+\frac{5}{7}x\right|\geq \frac{5}{7}|x|-\frac{4}{7}|y|\geq \frac{3}{7}|x| \geq \left|\frac{2}{7}x\right|\]
which yields $A_2(x,y)^\top \in \Int \mathcal{C}_2$ when $(x,y)^\top$ is nonzero. In a similar manner, if $(x,y)^\top \in \mathcal{C}_2$ then
\[\left|\frac{5}{7}y-\frac{4}{7}x\right|\geq \frac{5}{7}|y|-\frac{4}{7}|x|\geq \frac{3}{7}|y|\geq \left|\frac{2}{7}y\right|\]
and
\[\left|\frac{4}{7}y+\frac{5}{7}x\right|\geq \frac{4}{7}|y|-\frac{5}{7}|x|\geq \frac{3}{7}|x|\geq \left|\frac{2}{7}x\right|\]
which respectively give $A_1(x,y)^\top \in \Int \mathcal{C}_1$ and $A_2(x,y)^\top \in \Int \mathcal{C}_2$ when $(x,y)^\top$ is nonzero. 

If we now let $w=(1,1)^\top$ then $\langle u,w\rangle$ is never zero for any nonzero $u \in \mathcal{C}_1 \cup \mathcal{C}_2$, so defining 
\[\mathcal{K}_i:=\left\{u \in \mathcal{C}_i\colon \langle u,w\rangle>0\right\}\]
for $i=1,2$ it is not difficult to see that $(\mathcal{K}_1,\mathcal{K}_2)$ is a multicone for $(A_1,A_2)$. In particular Theorem \ref{th:main} may be applied to estimate the affinity dimension of the pair $(A_1,A_2)$. Let $(B_1,B_2):=(A_1,-A_2)$. Since
\begin{align*}e^{P(A_1,A_2;1)}=e^{P(B_1,B_2;1)}&=\lim_{n \to \infty} \left(\sum_{|\iii|=n} \left\|B_\iii\right\|\right)^{\frac{1}{n}}  \\
&\geq \lim_{n \to \infty} \left\|\sum_{|\iii|=n} B_\iii\right\|^{\frac{1}{n}}\\
&=\lim_{n \to \infty}  \left\|(B_1+B_2)^n\right\|^{\frac{1}{n}}=\rho(B_1+B_2)=\frac{\sqrt{50}}{7}>1\end{align*}
and 
\[e^{P(A_1,A_2;2)}=|\det A_1|+|\det A_2|=\frac{8}{49}<1\]
 we infer that $\dimaff(A_1,A_2) \in (1,2)$. The first 20 approximations to the affinity dimension of $(A_1,A_2)$ are tabulated in Table \ref{ta:blen}. 


\subsection{Example 2: a three-dimensional iterated function system}

\begin{table}
\begin{center}
\begin{tabular}{*3c}
\toprule
n & Approximation to affinity dimension&CPU time\\
\midrule
3&1.74010 38961 34544 64381 66016 57752 82592 79145&0.067s\\
4 & \underline{1}.53612 13489 34570 18769 13237 56458 61628 45041&0.10s\\
5 &\underline{1.5}8779 31446 44939 17928 98900 28708 16065 92496&0.15s\\
6 & \underline{1.58}459 23810 06597 43285 21249 54866 32813 68839&0.22s\\
7 &\underline{1.584}77 97771 44149 34557 48903 92413 22985 52229&0.33s\\
8 & \underline{1.58477} 17757 07488 53767 71488 42424 52891 52003&0.63s\\
9 & \underline{1.58477} 20386 65944 76377 72361 85895 44529 09738&0.80s\\
10 & \underline{1.58477} \underline{203}18 53062 52952 58955 36166 25319 46959&1.4s\\
11 & \underline{1.58477} \underline{2031}9 95110 47059 43620 26740 31575 13317&2.4s\\
12 & \underline{1.58477} \underline{20319} \underline{9}2686 60697 00747 19778 01115 41015&5.4s\\
13 & \underline{1.58477} \underline{20319} \underline{92}720 93370 05697 62846 36869 58071&12s\\
14 & \underline{1.58477} \underline{20319} \underline{92720} 52545 02878 00445 78535 74528&27s\\
15 & \underline{1.58477} \underline{20319} \underline{92720} \underline{52}956 88351 89418 63989 50927&59s\\
16 & \underline{1.58477} \underline{20319} \underline{92720} \underline{5295}3 32862 81715 84179 24019&130s\\
17& \underline{1.58477} \underline{20319} \underline{92720} \underline{52953} \underline{3}5507 79078 84111 41677&270s\\
18 & \underline{1.58477} \underline{20319} \underline{92720} \underline{52953} \underline{35}490 71502 87276 30757&560s\\
19 & \underline{1.58477} \underline{20319} \underline{92720} \underline{52953} \underline{35490} 81124 12318 84553&1200s\\
20 & \underline{1.58477} \underline{20319} \underline{92720} \underline{52953} \underline{35490} \underline{81}076 56294 07542&2800s\\
21 & \underline{1.58477} \underline{20319} \underline{92720} \underline{52953} \underline{35490} \underline{81076} 77018 06325&5900s\\ 
\bottomrule
\end{tabular}\bigskip\medskip
\caption{Approximations to the affinity dimension of Example 3 calculated using Theorem \ref{th:main} and the secant method as described in \S\ref{ss:urk}, implemented in Wolfram Mathematica. The CPU time used in each approximation is as reported by Mathematica's {\tt{Timing}} function. Digits which are empirically observed to have converged to a stable value are underlined. Convergence is noticeably slower than for two-dimensional examples: in this context our bound for the error in the $n^{\mathrm{th}}$ approximation is $O(\exp(-\gamma n^{5/4}))$ as opposed to $O(\exp(-\gamma n^{2}))$ in the other examples. For $n=1,2$ the approximation to the pressure function has no root in $(1,2)$ and these lines are therefore omitted.
}\label{ta:aff2}
\end{center}
\end{table}
\begin{figure}
    \centering
    \includegraphics[width=0.7\linewidth]{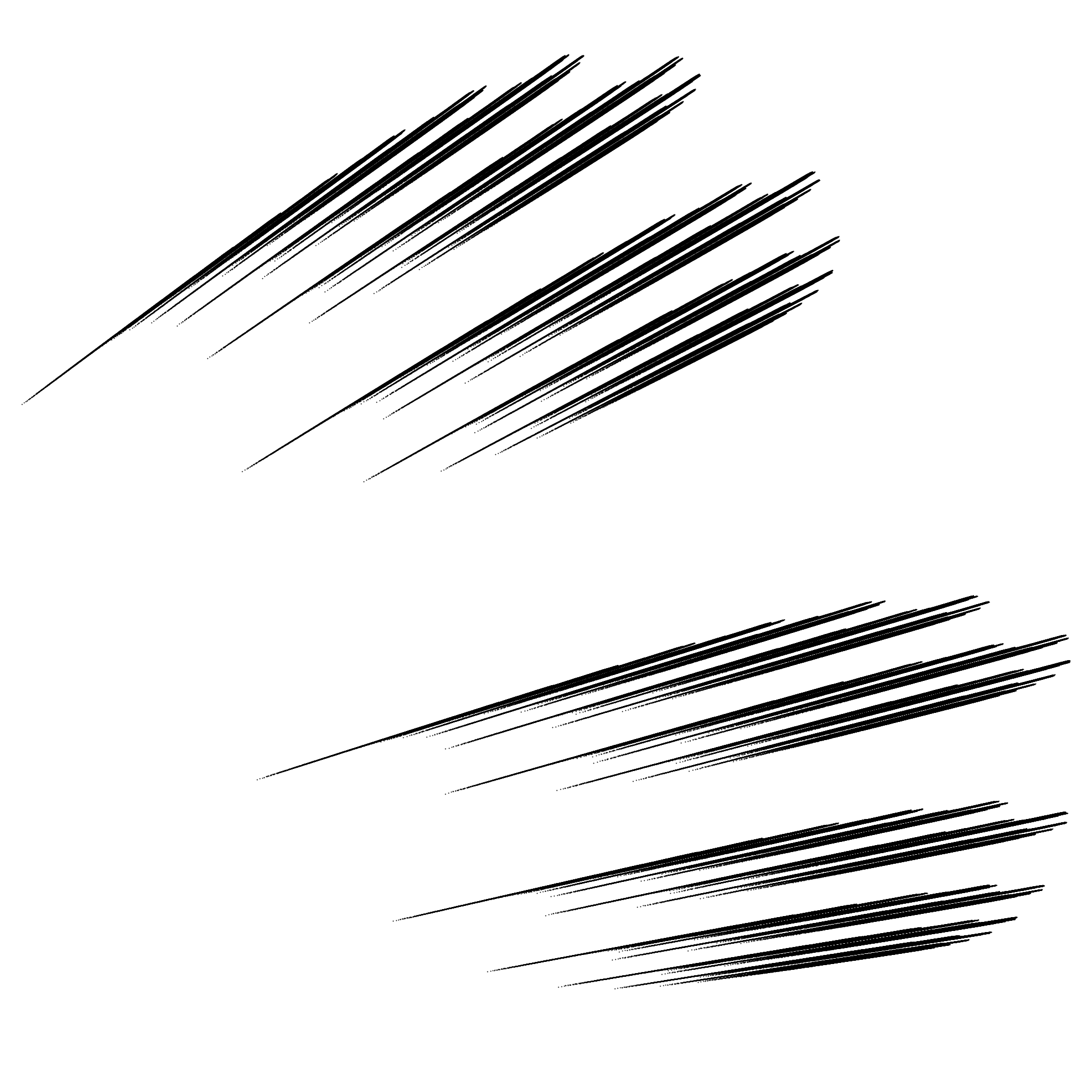}
    \caption{A projection of the attractor of the iterated function system defined by Example 3. Approximations to the affinity dimension computed using Theorem \ref{th:main} are listed in Table \ref{ta:aff2}.  It is known from work of Falconer \cite[\S5]{Fa88} that the upper box dimension $\overline{\dim}_BX$ is bounded above by $\dimaff(A_1,A_2)$, but unlike the case of planar affine iterated function systems current techniques are not powerful enough to determine whether or not $\dim_H X= \dimaff (A_1,A_2)$.}\label{fi:sting}
    \end{figure}

Consider $(A_1,A_2)$ where
\[A_1:=\frac{1}{12}\begin{pmatrix}5 & 4 & 1\\ 5& 5 & 4\\ 0 &1 &5\end{pmatrix},\qquad A_2:=\frac{1}{12}\begin{pmatrix}5 & 5 & 0\\ 4& 5 & 1\\ 1 &4 &5\end{pmatrix}=A_1^\top \]
and note that $A_1$ and $A_2$ are contractions in the Euclidean norm. It is easily checked that $(A_1A_1,A_1A_2,A_2A_1,A_2A_2)$ is a tuple of positive invertible matrices and is therefore $1$-dominated. By the characterisation of domination in terms of singular values this clearly implies that $1$-domination holds also for $(A_1,A_2)$.

We identify each $A_i$ with the corresponding linear map $\mathbb{R}^3 \to \mathbb{R}^3$ defined by $A_i$ with respect to the standard basis $e_1,e_2,e_3$ of $\mathbb{R}^3$. With respect to the basis $e_1\wedge e_2, e_1\wedge e_3,e_2\wedge e_3$ for $\wedge^2\mathbb{R}^3$ we have
\[A_1^{\wedge 2}=\frac{1}{144}\begin{pmatrix}5&15&11 \\5&25&19\\5&25&21\end{pmatrix},\qquad A_2^{\wedge 2}=\frac{1}{144}\begin{pmatrix}5&5&5\\15&25&25\\11&19&21\end{pmatrix}.\]
Since  $(A_1^{\wedge 2},A_2^{\wedge 2})$ is thus representable by a pair of positive matrices we see that $(A_1,A_2)$ is both $1$-and $2$-dominated. Using non-negativity it follows by a theorem of Yu. V. Protasov (\cite{Pr10}) that
\[\lim_{n \to \infty} \left(\sum_{|\iii|=n} \|A_\iii\| \right)^{\frac{1}{n}}=\rho(A_1+A_2)>1\]
and
\[\lim_{n \to \infty} \left(\sum_{|\iii|=n} \left\|A_\iii^{\wedge 2}\right\| \right)^{\frac{1}{n}}=\rho\left(A_1^{\wedge 2}+A_2^{\wedge 2}\right)<1.\]
Thus $P(A_1,A_2;1)>0>P(A_1,A_2;2)$ and consequently $\dimaff (A_1,A_2) \in (1,2)$, and we conclude that Theorem \ref{th:main} is applicable to the computation of $\dimaff (A_1,A_2)$. The first 21 approximations to $\dimaff (A_1,A_2)$ are presented in Table \ref{ta:aff2}. An illustration of the attractor of the iterated function system
\[T_1\begin{pmatrix}x\\y\\z\end{pmatrix} := \frac{1}{12}\begin{pmatrix}5 & 4 & 1\\ 5& 5 & 4\\ 0 &1 &5\end{pmatrix}\begin{pmatrix}x\\y\\z\end{pmatrix}+\begin{pmatrix}1\\0\\0\end{pmatrix}\]
 \[T_2\begin{pmatrix}x\\y\\z\end{pmatrix} :=\frac{1}{12}\begin{pmatrix}5 & 5 & 0\\ 4& 5 & 1\\ 1 &4 &5\end{pmatrix}\begin{pmatrix}x\\y\\z\end{pmatrix}+\begin{pmatrix}0\\0\\1\end{pmatrix}\]
is given in Figure \ref{fi:sting}.

\begin{figure}
    \centering
    \includegraphics[width=0.7\linewidth]{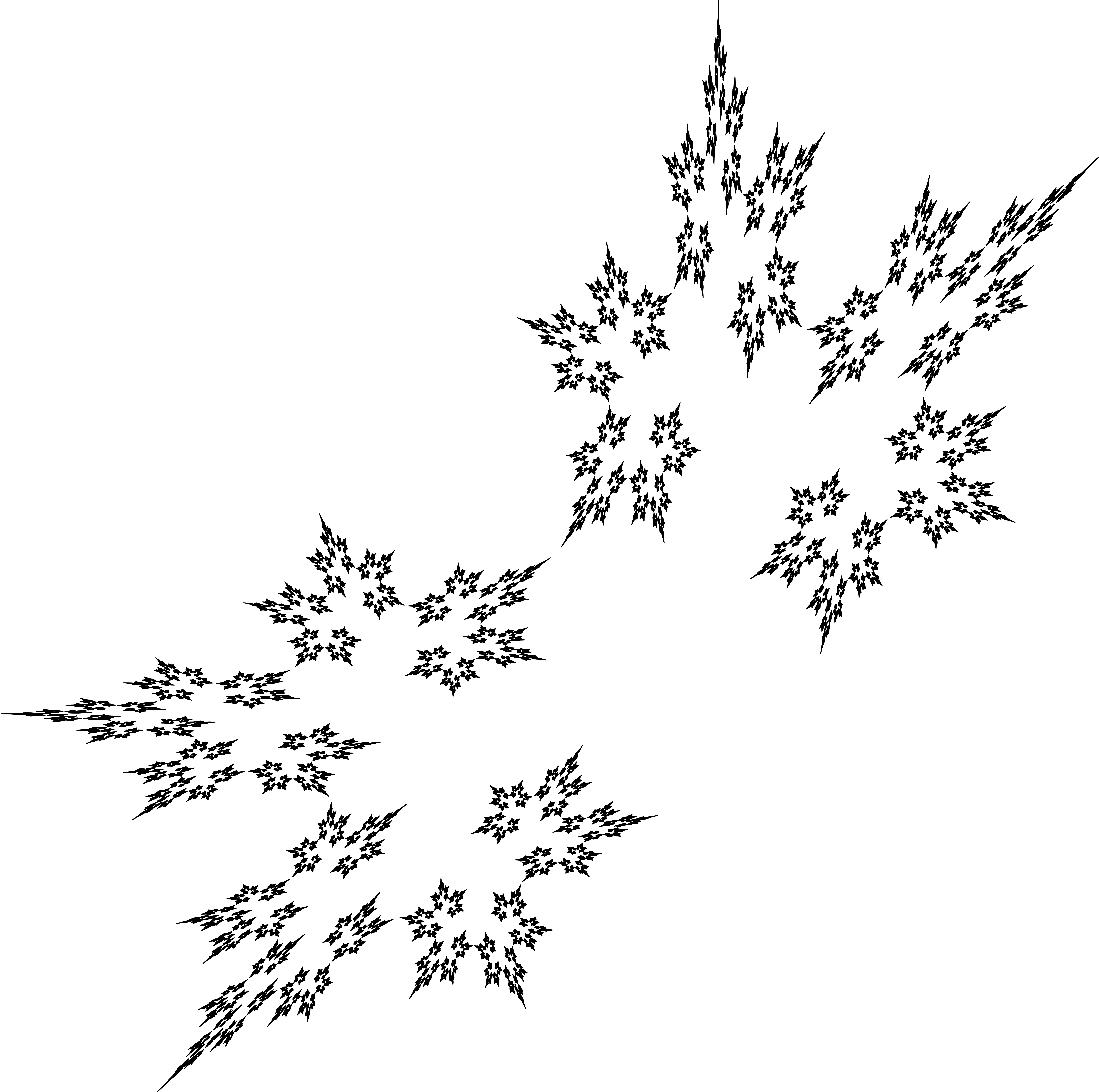}
    \caption{This self-affine set was shown in \cite[\S6.6]{MoSh17} to have Hausdorff dimension equal to the affinity dimension of the defining iterated function system. However, the linear parts of the defining affine transformations have non-real eigenvalues and Theorem \ref{th:main} is not applicable. Non-rigorous estimates using the discretisation method described in \S\ref{se:nondom} as tabulated in Table \ref{ta:pdance} suggest that the affinity dimension is equal to approximately 1.522688.}\label{fi:gs}
    \end{figure}

\section{Non-dominated matrices}\label{se:nondom}

If $(A_1,\ldots,A_N) \in M_2(\mathbb{R})^N$ is a tuple of invertible matrices which is not $1$-dominated then by a line of reasoning due to A. Avila \cite{Yo04} there exist tuples $(A_1',\ldots,A_N')$ arbitrarily close to $(A_1,\ldots,A_N)$ with the property that some product $A_{i_1}'\cdots A_{i_n}'$ has complex eigenvalues. For such matrices the formula for $t_n(s)$ in  Theorem \ref{th:main} has no clear meaning, and also for such matrices no open subset of $\mathbb{RP}^1$ may be found which is mapped strictly inside itself by the action of the matrices $A_i'$, preventing the construction of a trace-class transfer operator in direct mimicry of Theorem \ref{th:main}. For such matrices it is therefore difficult to see how any reasonable adaptation of Theorem \ref{th:main} could be made. In this sense we believe that $1$-domination, or multipositivity, is the weakest open condition on the matrices $A_1,\ldots,A_N$ which permits a version of Theorem \ref{th:main} to be proved.

\begin{table}
\begin{center}
\begin{tabular}{*3c}
\toprule
Mesh size & \makecell{Approximation to \\affinity dimension} & CPU time\\
\midrule
$2$&\underline{1}.02591849& 0.010s\\
$2^2$&\underline{1}.07532743 & 0.0065s\\
$2^3$&\underline{1}.\underline{11}171266 &  0.018s\\
$2^4$&\underline{1}.\underline{11}715797& 0.036s\\
$2^5$&\underline{1}.\underline{11}608327 & 0.053s\\
$2^6$&\underline{1}.\underline{11}557816 & 0.80s\\
$2^7$&\underline{1}.\underline{115}37306& 0.46s\\
$2^8$&\underline{1}.\underline{115}61123 & 0.35s\\
$2^9$&\underline{1}.\underline{115}59940 & 0.65s\\
$2^{10}$&\underline{1}.\underline{115}61053 & 1.8s\\
$2^{11}$&\underline{1}.\underline{115}58601& 2.7s\\
$2^{12}$&\underline{1}.\underline{115}60216 & 4.8s\\
$2^{13}$&\underline{1}.\underline{11560}441 & 24s\\
$2^{14}$&\underline{1}.\underline{11560}185& 21s\\
$2^{15}$&\underline{1.}\underline{11560}275 & 67s\\
$2^{16}$&\underline{1}.\underline{11560}321 & 270s\\
$2^{17}$&\underline{1}.\underline{115603}15& 4100s\\
\bottomrule
\end{tabular}\bigskip\medskip
\caption{Estimates of the affinity dimension of Example 1 calculated using the non-rigorous discretisation method described in \S\ref{se:nondom}. Even at small mesh sizes the first few decimal places show good agreement with Table \ref{ta:blen} but convergence in subsequent decimal places is markedly slower. Digits which are empirically observed to have converged to a stable value are underlined.}\label{ta:nkie}
\end{center}
\end{table}

However, for non-dominated matrices it is still possible to obtain non-rigorous estimates of the affinity dimension by other techniques. Given $A_1,\ldots,A_N \in GL_2(\mathbb{R})$ and $s \in [0,1]$ we may define an operator $\mathscr{L}_s \colon C^\alpha(\mathbb{RP}^1) \to C^\alpha(\mathbb{RP}^1)$ by
\[\left(\mathscr{L}_sf\right)(\overline{u}):=\sum_{i=1}^N \left(\frac{\|A_iu\|}{\|u\|}\right)^s f\left(\overline{A_iu}\right),\]
and for $s \in [1,2]$ by
\[\left(\mathscr{L}_sf\right)(\overline{u}):=\sum_{i=1}^N \left(\frac{\|A_iu\|}{\|u\|}\right)^{2-s} |\det A_i|^{s-1} f\left(\overline{A_iu}\right),\]
in such a manner that
\[\rho(\mathscr{L}_s)=\lim_{n\to\infty}\left(\sum_{i_1,\ldots,i_n=1}^N \varphi^s\left(A_{i_1}\cdots A_{i_n}\right)\right)^{\frac{1}{n}}\]
and such that $\rho(\mathscr{L}_s)$ is a simple eigenvalue of $\mathscr{L}_s$, as long as $\alpha \in (0,1)$ is chosen suitably small (in a manner which in general will depend on $s$) and mild algebraic non-degeneracy conditions on $(A_1,\ldots,A_N)$ are met. (These spectral properties are guaranteed by, for example, \cite[Th\'eor\`eme 8.8]{GuLe04}.) We could then hope to estimate the spectral radius of $\mathscr{L}_s$ for different values of $s$ by discretising the phase space $\mathbb{RP}^1$, constructing a large matrix representing a discretised action of $\mathscr{L}_s$, and working on the supposition that the spectral radius of the matrix is a good approximation to $\rho(\mathscr{L}_s)$ and hence to $e^{P(A_1,\ldots,A_N;s)}$. In practical experiments we were able to obtain around five decimal places of accuracy for the affinity dimension by discretising $\mathbb{RP}^1$ into approximately $10^4$ evenly-spaced mesh points: see Tables \ref{ta:nkie} and \ref{ta:pdance}. We observe in particular that the results obtained in Table \ref{ta:nkie} show good agreement with Theorem \ref{th:main} when tested on the multipositive matrix set described in Example 2. However, we have not been able to make this method of estimation rigorous. This approach could also be applied to higher-dimensional affine iterated function systems but we have not investigated the matter of finding suitable discretisations of the more complicated phase spaces required in this context.

\begin{table}
\begin{center}
\begin{tabular}{*3c}
\toprule
Mesh size & \makecell{Approximation to \\affinity dimension} & CPU time\\
\midrule
$2$&1.50000000& 0.0028s\\
$2^2$&\underline{1}.\underline{5}1578683 & 0.0025s\\
$2^3$&\underline{1}.\underline{5}1254065  &  0.0047s\\
$2^4$&\underline{1}.\underline{52}070716 & 0.033s\\
$2^5$&\underline{1}.\underline{52}415711 & 0.059s\\
$2^6$&\underline{1}.\underline{52}305542 & 0.079s\\
$2^7$&\underline{1}.\underline{52}290806 & 0.13s\\
$2^8$&\underline{1}.\underline{522}62668 & 0.26s\\
$2^9$&\underline{1}.\underline{522}69395 & 0.61s\\
$2^{10}$&\underline{1}.\underline{522}70408 & 1.1s\\
$2^{11}$&\underline{1}.\underline{522}69152 & 2.2s\\
$2^{12}$&\underline{1}.\underline{5226}8717  & 4.5s\\
$2^{13}$&\underline{1}.\underline{52268}810 & 7.7s\\
$2^{14}$&\underline{1}.\underline{52268}795 & 18s\\
$2^{15}$&\underline{1}.\underline{522687}80 & 55s\\
$2^{16}$&\underline{1}.\underline{5226878}0 & 220s\\
$2^{17}$&\underline{1}.\underline{5226878}2 & 1400s\\
\bottomrule
\end{tabular}\bigskip\medskip
\caption{Estimates of the affinity dimension of the iterated function system defined in \cite[\S6.6]{MoSh17} and illustrated in Figure \ref{fi:gs}, calculated using the non-rigorous discretisation method described in \S\ref{se:nondom}. Digits which are empirically observed to have converged to a stable value are underlined. No rigorous estimate of the affinity dimension of this IFS is currently available.}\label{ta:pdance}
\end{center}
\end{table}

\section{Acknowledgements}
This research was supported by the Leverhulme Trust (Research Project Grant number RPG-2016-194). The author thanks O. Bandtlow for helpful comments and suggestions. The author additionally thanks an anonymous reviewer for suggesting several economies of argument.

\bibliographystyle{acm}
\bibliography{polecat}
\end{document}